\newcounter{a}\setcounter{a}{0}
\newenvironment{cond}{\refstepcounter{a} 
~~~~\begin{minipage}{15cm} {\bf Cond} \thea~:  }{\end{minipage}}
\def \C {{\bf Cond }}
\newcommand{\Sr}{\rho^{1/2}}
\newcommand{\tran}[1]{{}^t#1}
\def \captionn#1{\begin{center}\begin{minipage}{15cm}\caption{\small\sf #1}\end{minipage}\end{center}}
\def \mo{{\;\sf mod\;}}
\def \Proof{\noindent {\bf Proof. }}
\def \l{\left}
\def \r{\right}
\def \P{\mathbb{P}}
\def \Z{\mathbb{Z}}
\def \LL{\mathbb{L}}
\def \ME{{\sf ME}}
\newcommand{\MV}[1]{\ME(#1)} 
\newcommand{\RV}[1]{R_{#1}} 
\newcommand{\LV}[1]{L_{#1}} 
\def \Id{{\sf Id}}
\def \bT{{\bf T}}
\def \floor#1{\l \lfloor#1\r \rfloor}
\def \ben{\begin{eqnarray}}
\def \een{\end{eqnarray}}
\def \be{\begin{eqnarray*}}
\def \ee{\end{eqnarray*}}
\def \beq{\begin{equation}}
\def \eq{\end{equation}}
\def \P{\mathbb{P}}
\def \PCA{{\sf PCA}}
\def \HZ{{\sf HZ}}
\def \H{{\sf H}}
\def \Trace{{\sf Trace}}
\def \eref#1{(\ref{#1})}
\def \bma{\begin{bmatrix}}
\def \ema{\end{bmatrix}}
\def \imp{{\Rightarrow}}
\def \TT#1#2#3{T_{#1,#2\atop{#3}}}
\def \TTp#1#2#3{T'_{#1,#2\atop{#3}}}
\def \dis{\displaystyle}
\newcommand{\cyl}[1]{\Z / #1 \Z}
\newcommand{\n}{n} 
\newcommand{\IT}[3]{{#1}_{#2,#3}} 
\newcommand{\FA}{\mathcal{A}} 
\newcommand{\TA}{{\sf Tr}} 
\newcommand{\eqd}{\stackrel{d}=}
\theoremstyle{plain}
\newtheorem{theo}{Theorem}[section]
\newtheorem{lem}[theo]{Lemma}
\newtheorem{pro}[theo]{Proposition}
\newtheorem{cor}[theo]{Corollary}
\newtheorem{rem}[theo]{Remark}
\theoremstyle{definition}
\begin{document}%

\begin{center}
\Large\bf
Markovianity of the invariant distribution of probabilistic cellular automata on the line.\\
{\large \bf Jérôme Casse and Jean-Fran\c{c}ois Marckert}
\rm \\
\large{CNRS, LaBRI \\ Universit\'e Bordeaux \\
 351 cours de la Libération\\
 33405 Talence cedex, France}
 \normalsize
\end{center}

\begin{abstract} We revisit the problem of finding the conditions under which synchronous probabilistic cellular automata indexed by the line $\mathbb{Z}$, or the periodic line $\cyl{n}$, depending on 2 neighbours, admit as invariant distribution the law of a space-indexed Markov chain. Our advances concerns PCA defined on a finite alphabet, where most of existing results concern size 2 alphabet.\par
A part of the paper is also devoted to the comparison of different structures ($\mathbb{Z}$, $\cyl{n}$, and also some structures constituted with two consecutive lines of the space time diagram) with respect to the property to possess a Markovian invariant distribution.  \\
{\sf Keywords : } Probabilistic cellular automata, invariant distribution.  \\
{\sf AMS classification : Primary 37B15; 60K35 ,  Secondary 60J22; 37A25.} 
\end{abstract}

\section{Introduction}

\bf Foreword. \rm \sf Take a (random or not) colouring $X:=(X_i,i\in \mathbb{Z})\in \{0,1,\dots, \kappa\}^\mathbb{Z}$ of the line with numbers taken in $E_\kappa=\{0,1,\dots, \kappa\}$ for some $\kappa\geq 1$, and let $A$ be a probabilistic cellular automaton (PCA) depending on a neighbourhood of size $2$ with transition matrix $T=\l[T_{(a,b),c}, (a,b)\in E_\kappa^2,c\in E_\kappa\r]$. This PCA allows one to define a {\bf time-indexed} Markov chain $(X(t),t\geq 0)$ taking its values in the set of colourings of $\mathbb{Z}$, $\{0,1,\dots, \kappa\}^\mathbb{Z}$, as follows. Set, for any $t\geq 0$,  $X(t)=(X_i(t), i \in \mathbb{Z})$. At time 0, $X(0)=X$ and for any $t\geq 0$, the law ${\cal L}(X_i(t+1)~|~ X(t))={\cal L}(X_i(t+1)~|~(X_i(t),X_{i+1}(t))$, and this law is given by
 \[\P\big(X_i(t+1)=c~|~X_i(t)=a,~X_{i+1}(t)=b\big)=T_{(a,b),c}, ~~\textrm{for any }i \in \mathbb{Z}.\]
Conditionally on $X(t)$, the random variables $(X_i(t+1),i\in \mathbb{Z})$ are independent. Hence a PCA transforms $X(t)$ into $X(t+1)$ by proceeding to local, simultaneous, homogeneous, random transformations.  A natural question is to compute the set $S$ of stationary distributions of the Markov chain $(X(t),t\geq 0)$ in terms of $T$. It turns out that this question is quite difficult and very few is known on that point. \par
Some advances have been made in a particular direction: this is the case where there exists a stationary measure which is itself the law of a {\bf space-indexed} Markov chain. It is important to distinguish between both notions of Markov chain (MC) discussed here : $(X(t),t\geq 0)$ is by construction a MC taking its values in the configurations space $E_\kappa^{\mathbb{Z}}$, but when we say that the law of a MC is invariant by the PCA, we are talking about a MC with values in $E_\kappa$, indexed by the line.
 
A related question, is that of the characterization of the set of PCA for which there exists an invariant distribution which is Markov on a associated structure, called here an horizontal zigzag (defined in Section \ref{sec:HZHZ}).
In the paper we give a complete characterisation of the transition matrices $T$ having this property (Theorem \ref{theo:HZcolors}) and we provide a similar criterion for the case where the PCA is defined on $\cyl{n}$ instead (Theorem \ref{theo:HZcolors-2}). Till now, this result was known for $\kappa=1$ (the two-color case), and we give the result for $\kappa\geq 2$.\par
The property ``to have the law of a Markov process as invariant distribution'' depends on the graph on which the PCA is defined. In Section \ref{sec:RM} we compare the conditions needed to have this property with respect to the underlying graph.  \rm~\\
\centerline{------------------------------}

We start with some formal definitions.   {\it Cellular automata} (CA) are dynamical systems in which space and time are discrete. A CA is a 4-tuple $A:=(\LL, E_\kappa,N,f)$ where:
\begin{itemize}\itemsep0em 
\item $\LL$ is the lattice, the set of cells. It will be $\Z$ or $\cyl{n}$ in the paper,
\item $E_\kappa=\{0,1,\dots,\kappa\}$ for  some $\kappa \geq 1$, is the set of states of the cells, 
\item $N$ is the neighbourhood function: for $x\in\LL$, $N(x)$ is a finite sequence of elements of $\LL$, the list of neighbours of $x$; its cardinality is $|N|$. 
Here, 
\ben
N(x)= (x,x+1) \textrm{~~when }\LL=\mathbb{Z} \textrm{~~and~~}  N(x)=(x,x+1 \mo n) \textrm{ when }\LL=\cyl{n}, 
\een
\item $f$ is the \sl local rule. \rm It is a function $f:E^{|N|}_\kappa\to E_\kappa$.
\end{itemize}
The CA $A=(\LL, E_\kappa,N,f)$ defines a global function $F:E_\kappa^{\LL}\to E_\kappa^{\LL}$ on the set of configurations indexed by $\LL$. For any $S_0=(S_0(x),x\in \LL)$, $S_1=(S_1(x),x\in \LL):=F(S_0)$ is defined by
\[S_1(x)=f([ S_0(y),y \in N(x)]),~~~x \in \mathbb{L}.\] 
In words the states of all the cells are updated simultaneously. The state  $S_1(x)$ of $x$ at time 1 depends only on the states  $S_0(x)$ and $S_0(x+1)$ of its neighbours at time 0. 
Starting from configuration $\eta\in E_\kappa^{\LL}$ at time $t_0$, meaning $S_{t_0}=\eta$, the sequence of configurations 
\beq\label{eq:std}
S:=(S_t=(S(x,t),x\in \LL), t\geq t_0),
\eq
where $S_{t+1}:=F(S_{t})$ for $t\geq t_0$, forms what we call the space-time diagram of $A$.

\noindent{\it Probabilistic cellular automata} (PCA) are generalisations of CA in which the states $(S(x,t),x\in \LL,t\in \bT)$ are random variables (r.v.) defined on a common probability space $(\Omega,\FA,\P)$, each of the r.v. $S(x,t)$ taking a.s. its values in $E_\kappa$. Seen as a random process, $S$ is equipped with the $\sigma$-fields generated by the cylinders.  In PCA the local deterministic function $f$ is replaced by a transition matrix $\TA$ which gives the distribution of the state of a cells at time $t+1$ conditionally on those of its neighbours at time $t$:
\ben\label{eq:STX}
\P\l(S(x,t+1)= b ~|~ [S(y,t),y\in N(x)]=[a_1,\dots,a_{|N|}]\r) =\TA_{(a_1,\dots,a_{|N|}),b}.
\een
Conditionally on $S_t$, the states in $(S(x,t+1), x\in \mathbb{L})$ are independent.\par
The transition matrix (TM) is then an array of non negative numbers
\ben
\TA = \l(\TA_{(a_1,\dots,a_{|N|}),b}\r)_{\l((a_1,\dots,a_{|N|}),b\r) \in E_\kappa^{|N|}\times E_\kappa},
\een
satisfying $\sum_{b\in E_\kappa}\TA_{(a_1,\dots,a_{|N|}),b}=1$ for any $(a_1,\dots,a_{|N|})\in E^{|N|}_\kappa$.
 
Formally a PCA is a $4$-tuple $A:=(\LL, E_\kappa,N,\TA)$. Instead of considering $A$ as a random function on the set of configurations $E_\kappa^{\LL}$, $A$ is considered as an operator on the set of probability laws ${\cal M}(E_\kappa^{\LL})$ on the configuration space.  If $S_0$ has law $\mu_0$ then the law of $S_1$ will be denoted by $\TA(\mu_0)$: the meaning of this depends on the lattice $\LL$, but this latter will be clear from the context. The process $(S_t,t\in \bT)$ is defined as a time-indexed MC, the law of $S_{t+1}$ knowing $\{S_{t'}, t'\leq t\}$ is the same as that knowing $S_t$ only: conditionally on $S_t=\eta$, the law of $S_{t+1}$ is $\TA(\delta_{\eta})$,  where $\delta_\eta$ is the Dirac measure at $\eta$. 
A measure $\mu\in {\cal M}(E_\kappa^{\LL})$ is said to be invariant for the PCA $A$ if $\TA(\mu)=\mu$.  We will simply say that $\mu$ is invariant by $\TA$  when no confusion on the lattice $\LL$ exists.

The literature on CA, PCA, and asynchronous PCA is huge. We here concentrate on works related to PCA's only, and refer to Kari \cite{Kari} for a survey on CA (see also Ganguly \& al. \cite{Gal} and Bagnoli \cite{Bag}), to Wolfram \cite{wol94} for asynchronous PCA and to Liggett \cite{lig} for more general interacting particle systems. For various links with statistical mechanics, see Chopard \&. al. \cite{CLM}, Lebowitz \& al. \cite{Leb}. 
PCA are studied by different communities: in statistical mechanics and probability theory in relation with particle systems as Ising (Verhagen \cite{VER}), hard particles models (Dhar \cite{DH1,DH4}), Gibbs measures (\cite{DP, PLR, dkt, PYL}), percolation theory, in combinatorics (\cite{DH1,DH4,MBM,LB-M,Alb,Ma}) where they emerge in relation with directed animals, and in computer science around the problem of stability of computations in faulty CA (the set of CA form a Turing-complete model of computations),  see e.g. G\'acs \cite{Gacs}, Toom \& al. \cite{dkt}. In a very nice survey Mairesse \& Marcovici \cite{MM2} discuss these different aspects of PCA (see also the PhD thesis of Marcovici \cite{IM}).

\paragraph{Notation .} The set of PCA on the lattice $\LL$ equal to $\mathbb{Z}$ (or $\cyl{n}$) and neighbourhood function $N(x)=(x,x+1)$ (or $N(x)=(x,x+1 \mod n)$)  with set of states $E_\kappa$ will be denoted by $\PCA\l({\LL},E_\kappa\r)$. This set is parametrised by the set of TM $\{ (\TA_{(a,b),c},(a,b,c)\in E_\kappa^3)\} $. A TM $\TA$ which satisfies $\TA_{(a,b),c}>0$ for any $a,b,c\in E_\kappa$ is called a positive rate TM, and a PCA $A$ having this TM will  also be called a  positive rate PCA. The subset of $\PCA\l({\LL},E_\kappa\r)$ of  PCA with positive rate will be denoted by  $\PCA(\LL,E_\kappa)^\star$. In order to get more compact notation, on which the time evolution is more clearly represented, we will write $\TT{a}{b}{c}$ instead of $T_{(a,b),c}$. \medskip

Given a PCA  $A:=(\LL, E_\kappa,N,T)$  the first question arising is that of the existence, uniqueness and description of the invariant distribution(s) and when the invariant distribution is unique, the question of convergence to this latter. Apart the existence which is always guarantied (see  Toom \& al. \cite[Prop.2.5 p.25]{dkt}), important difficulties arise here and finally very little is known. In most cases, no description is known for the (set of) invariant distributions, and the question of ergodicity in general is not solved: the weak convergence of $T^{m}(\mu)$ when $m\to+\infty$ to a limit law independent from $\mu$ is only partially known for some TM $T$'s even when $\kappa=1$, as discussed in Toom \& al. \cite[Part 2, Chapters 3--7]{dkt}, and G\'acs \cite{Gacs} for a negative answer in general. 
Besides the existence of a unique invariant measure does not imply ergodicity (see Chassaing \& Mairesse \cite{CM}). 

For $A$ in $\PCA\l(\cyl{n},E_\kappa\r)$ the situation is different since the state space is finite. When $A\in\PCA(\cyl{n},E_\kappa)^\star$ the MC $(S_t,t\geq 0)$ is aperiodic and irreducible and then owns a unique invariant distribution which can 
be computed explicitly for small $n$, since $\mu =\TA(\mu )$ is a linear system. 

\subsection{The structures}
\label{sec:HZHZ}
We present now the \it geometric structures \rm that will play a special role in the paper. 
The $t$th (horizontal) line on the space-time diagram is 
\[H_t:=\{(x,t), x \in \mathbb{Z}\},\] 
and we write $H_t(n):=\{(x,t), x\in \cyl{n}\}$ for a line on the space-time diagram in the cyclic case. The $t$th \it horizontal zigzag \rm on the space-time diagram is 
\beq\label{eq:HZ}
\HZ_t:=\l\{ \l(\floor{x/2},t+\frac{1+(-1)^{x+1}}2\r), x \in\mathbb{Z}\r\},
\eq
as represented on Figure \ref{fig:lat}. 
\begin{figure}[ht]
\centerline{\includegraphics{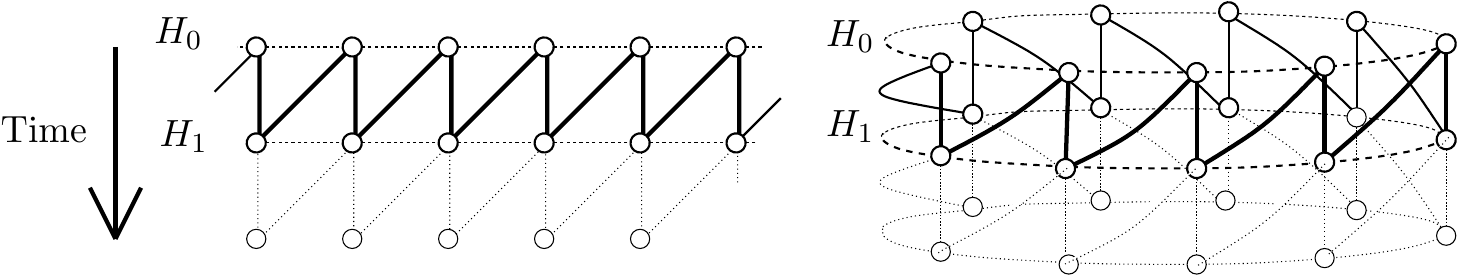}}
\captionn{\label{fig:lat} Illustration of $\HZ_t$, composed with $\H_t$ and $\H_{t+1}$, and $\HZ_t(n)$, composed by $\H_t(n)$ and $\H_{t+1}(n)$ in the case $t=0$ .}
\end{figure}
Define also $\HZ_t(n)$ by taking ($\floor{x/2} \mod n)$ instead of $\floor{x/2}$ in \eref{eq:HZ}. Since $\HZ_t$ is made by the two lines $\H_t$ and $\H_{t+1}$, a PCA $A=(\Z,E_\kappa,N,T)$ on $\Z$ can be seen as acting on the configuration distributions on $\HZ_t$. A transition from $\HZ_t$ to $\HZ_{t+1}$ amounts to a transition from $\H_{t+1}$ to $\H_{t+2}$, with the additional condition that the first line of $\HZ_{t+1}$ coincides with the second line of $\HZ_t$ (the transition probability is 0 if this is not the case) (see also the proof of Theorem \ref{thm:clzz} for more details).

\subsection{A notion of Markovianity per structure}
We first fix the matrix notation: $[\, A_{x,y}\,]_{ a\leq x,y \leq b}$ designates the square matrix with size $(b-a+1)^2$; the row index is $x$, the column one is $y$. The row vectors (resp. column vectors) will be written $[\, v_x \,]_{a\leq x \leq b}$ (resp. ${}^t[\, v_x\, ]_{a\leq x \leq b}$). \medskip

We define here what we call MC on $\H$, $\H(n)$, $\HZ$ and $\HZ(n)$. 
As usual a MC indexed by $\H_t$ is a random process $(S(x,t),x\in \Z)$ whose finite dimensional distribution are characterised by $(\rho, M)$, where $\rho$ is an initial probability distribution $\rho:=[\,\rho_a\,]_{a\in E_\kappa}$ and $M:=[\, M_{a,b}\, ]_{0\leq a,b \leq \kappa}$ a Markov kernel (MK) as follows:
\[\P(S(i,t)=a_i,  n_1\leq i \leq n_2)=\rho_{a_{n_1}}\prod_{i=n_1}^{n_2-1} M_{a_i,a_{i+1}},\]
where $\rho$ may depend on the index $n_1$. Observing what happens far away from the starting 
point, one sees that if the law of a $(\rho, M)$-MC with MK $M$ is invariant under a PCA $A$ with TM $T$ on the line, then the law of a $(\rho', M)$-MC with MK $M$ is invariant too, for $\rho'$ an invariant distribution for $M$. For short, in the sequel we will simply write $M$-MC and will specify the initial distribution when needed. 
 
A process $S_t$ indexed by $\HZ_t$ and taking its values in $A$ is said to be Markovian if there exists a probability measure $\rho:=(\rho_x, x \in E_\kappa)$ and two MK $D$ and $U$ such that, for any $n\geq 0$, any $a_i \in E_\kappa, b_i \in E_\kappa$,
\beq\label{eq:Mar-ZZ}
\P(S(i,t)=a_i,S(i,t+1)=b_i,0\leq i\leq n)=\rho_{a_0}\left( \prod_{i=0}^{n-1} D_{a_i,b_i} U_{b_i,a_{i+1}} \right) D_{a_n,b_n}
\eq
in which case $\rho$ is said to be the initial distribution. Again we are interested in shift invariant processes. We then suppose that $\rho$ is an invariant measure for the MK $DU$ in the sequel, that is $\rho=\rho DU$. We will call such a process a $(\rho,D,U)$-HZMC (horizontal zigzag MC), or for short a $(D,U)$-HZMC. HZMC correspond to some Gibbs measures on HZ (see e.g. Georgii \cite[Theo. 3.5]{geo11}).\par
 
 A process $S_t$ indexed by $\H_t(n)$ and taking its values in $E_\kappa$ is called a cyclic Markov chain (CMC) if there exists a MK $M$ such that for all $a = (a_0, \dots, a_{\n-1}) \in E_\kappa^{\n}$,
\begin{equation}  
\P(S(i,t)=a_i,i \in {\cyl{n}}) = Z_n^{-1}{\prod_{i=0}^{\n-1} M_{a_i,a_{i+1 \mo n}}}
\end{equation}where  $Z_n = \Trace(M^n)$.  The terminology \sl cyclic MC \rm is borrowed from Albenque~\cite{Alb}. Again, it corresponds to Gibbs distributions on $\H(n)$~\cite[Theo. 3.5]{geo11}. For two MK $D$ and $U$, a process $S$ indexed by $\HZ_t(n)$ and taking its values in $E_\kappa$ is said to be a $(D,U)$-cyclic MC (HZCMC) if for any $a_i \in E_\kappa, b_i \in E_\kappa$,
\beq
\P(S(i,t)=a_i,S(i,t+1)=b_i,0\leq i\leq n-1)=Z_n^{-1}\left( \prod_{i=0}^{n-1} D_{a_i,b_i} U_{b_i,a_{i+1 \mo n}} \right)
\eq
 where $Z_n  = \Trace((DU)^n)$. \par
We will also call product measure, a measure of the form $\mu(x_1,\dots,x_k)=\prod_{i=1}^k \nu_{x_i}$ for any $k\geq 1$, $(x_1,\dots,x_k)\in E_\kappa^k$, for some non negative $\nu_x$ such that $\sum_{x\in E_\kappa}\nu_x=1$.

\subsection{References and contributions of the present paper}

In the paper our main contribution concerns the case $\kappa> 1$. Our aim is to characterize the PCA, or rather the TM, which possesses the law of a MC as invariant measure. The approach is mainly algebraic.
Above we have brought to the reader attention that (different) PCA with same TM $T$ may be defined on each of the structure $\H$, $\H(n)$, $\HZ$ and $\HZ(n)$. The transitions $T$ for which they admit a Markovian invariant distribution depends on the structure. A part of the paper is devoted to these comparisons, the conclusions being summed up in Figure \ref{fig:imp}, in Section \ref{sec:RM}. 

The main contribution of the present paper concerns the full characterisation of the TM with positive rates for which there exists a Markovian invariant distribution on, on the one hand, $\HZ$ and, on the other hand, $\HZ(n)$ (some extensions are provided in Section \ref{sec:rprc}). One finds in the literature two main families of contributions in the same direction. We review them first before presenting our advances. 

The first family of results  we want to mention is the case $\kappa=1$ for which much is known.
  
\subsubsection{Case $\kappa=1$. Known results }

\label{sec:KR}
Here is to the best of our knowledge the exhaustive list of existing results concerning $\PCA$  having the law of a MC as invariant measure on  $\H$, $\H(n)$, $\HZ$ or $\HZ(n)$ for $\kappa=1$ and $N(x)=(x,x+1)$.\\
\underbar{\sf On the line $\H$:}
The first result we mention is due to  Beliaev \& al. \cite{bgm} (see also Toom \& al. \cite[section 16]{dkt}). 
A PCA $A=(\Z,E_1,N,T)\in \PCA(\mathbb{Z},E_1)^\star$  (with positive rate) admits the law of a MC on $\H$ as invariant measure if and only if (iff) any of the two following conditions hold:
 \begin{enumerate}[$(i)$]
\item $\TT000 \TT110\TT101 \TT011 = \TT111 \TT001 \TT010 \TT100,$ 
\item  $\TT011 \TT100 = \TT110 \TT001$ or $\TT101 \TT010 = \TT110 \TT001$.
\end{enumerate}
In case $(ii)$, the MC is in fact a product measure with marginal $(\rho_0,\rho_1)$, 
and 
\begin{displaymath}
\rho_0 = \begin{cases}
\frac{\TT000 \TT110 - \TT010 \TT100}{\TT000+\TT110-\TT010-\TT100} & \text{ if } \TT000+\TT110 \neq \TT010 + \TT100, \\
\frac{\TT110}{1 + \TT110-\TT010} & \text{ if } \TT000+\TT110 = \TT010 + \TT100
\end{cases}
\end{displaymath}
(the same condition is given in Marcovici \& Mairesse, Theorem~3.2 in~\cite{1207.5917}; see also this paper for more general condition, for different lattices, and for $\kappa>1$).\par
In case $(i)$, $M$ satisfies  $\TT010 \TT100 M_{1,0} M_{0,1}= \TT000 \TT110 M_{0,0} M_{1,1}$ and $M_{0,0} \TT001 = M_{1,1} \TT110$, and can be computed explicitly. \\

A PCA $A=(\Z,E_1,N,T)\in \PCA(\mathbb{Z},E_1)$  (without assuming the positive rate condition) admits the law of a MC on $\H$ as invariant measure iff  $\TT000=1$ or $\TT111=1$, or any of $(i)$ or $(ii)$ holds. This fact as well as that concerning the positive rate condition stated above can be shown using Proposition \ref{pro:PS}. This proposition provides a finite system of algebraic equations that $T$ has to satisfy, and this system can be solved by computing a Gröbner basis, which can be done explicitly using a computer algebra system like \sl sage \rm or \sl singular\rm. \par
Without the positive rate condition some pathological cases arise. Consider for example, the cases $(\TT100,\TT011)=(1,1)$ (case $(a)$) or $(\TT010,\TT101)=(1,1)$ (case $(b)$) or $(\TT001,\TT110)=(1,1)$ (case $(c)$). In these cases some periodicity may occur if one starts from some special configurations. Let $C_i$ be the constant sequence (indexed by $\mathbb{Z}$) equals to $i$, and $C_{0,1}$ the sequence $( \frac{1+(-1)^{n+1}}2, n \in \mathbb{Z}$) and $C_{1,0}$, the sequence  $( \frac{1+(-1)^{n}}2, n \in \mathbb{Z})$. It is easy to check that in case $(a)$, $(\delta_{C_{0,1}}+\delta_{C_{1,0}})/2$ is an invariant measure, in case $(b)$, $p\delta_{C_{0,1}}+(1-p)\delta_{C_{1,0}}$ is invariant for any $p\in[0,1]$, in case $(c)$, $(\delta_{C_{1}}+\delta_{C_{0}})/2$ is invariant. Each of these invariant measures are Markov ones with some ad hoc initial distribution. Case $(a)$ is given in Chassaing \& Mairesse \cite{CM} as an example of non ergodic PCA with a unique invariant measure (they add the conditions $\TT{0}01=\TT{1}11=1/2$). \medskip
 
\noindent\underbar{\sf On the periodic line $\HZ(n)$: }
(This is borrowed from Proposition~4.6 in Bousquet-Mélou \cite{MBM}). Let $A=(\cyl{\n},E_1,N,T)$ be a PCA in $\PCA(\cyl{\n},E_1)^\star$. In this case, $A$ seen as acting on $\HZ(n)$ admits a unique invariant distribution, and this distribution is that of a HZMC iff \ben\label{eq:sddpcs}
\TT000 \TT110\TT101 \TT011 = \TT111 \TT001 \TT010 \TT100.
\een
According to Lemma 4.4 in \cite{MBM}, this condition can be extended to $\PCA$ for which $T$, seen as acting on $E_\kappa^{\HZ(n)}$, is irreducible and aperiodic. The general case, that is, without assuming the positive rate condition, can be solved using Theorem \ref{theo:HZcolors-2}. The set of solutions contains all TM solutions to \eref{eq:sddpcs}, TM satisfying $\TT{0}00=1$ or $\TT111=1$, and some additional cases that depend on the size of the cylinder. \\
 
\noindent\underbar{\sf On $\HZ$: } 
Condition \eref{eq:sddpcs} is necessary and sufficient too (Theorem \ref{theo:HZcolors}) for positive rate automata.
 This result is also a simple consequence of Toom \& al. \cite[Section 16]{dkt}.\medskip

The other family of results are also related to this last zigzag case but are much more general. 
This is the object of the following section, valid for $\kappa\geq 1$. 
 
\subsubsection{Markovianity on the horizontal zigzag. Known results}
\label{sec:MHZZ}
Assume that a PCA $A=(\mathbb{Z}, E_\kappa,N,T)$ seen as acting on $\HZ$ admits as invariant distribution the law of a $(D,U)$-HZMC. Since $\HZ_t$ is made of $\H_t$ and $\H_{t+1}$, the law of $S_{t+1}$ knowing $S_t$ that can be computed using \eref{eq:Mar-ZZ}, relates also directly $(D,U)$  with $T$. From \eref{eq:Mar-ZZ} we check that in the positive rate case 
\ben\label{eq:zadz1}
\TT{a}{b}{c}= \frac{{\rho_a}}{ \rho_a}\frac{D_{a,c}U_{c,b}}{(DU)_{a,b}}= \frac{D_{a,c}U_{c,b}}{(DU)_{a,b}},
\een
where $\rho$ is the invariant distribution of the $DU$-MC  (solution to $\rho=\rho DU$).
Since the law of the $(D,U)$-HZMC is invariant by $T$, and since the MK of $S_t$ and $S_{t+1}$ are respectively $DU$ and $UD$, we must also have in the positive rate case,
\ben\label{eq:zadz2}
DU=UD.
\een

Indeed the law of $S_t$ and $S_{t+1}$ must be equal since they are both first line of some horizontal zigzags.
\begin{rem}Notice that when the positive rate condition does not hold, it may happen that the PCA can be trapped in some subsets of $E_\kappa^{\mathbb{Z}}$, and can admit a Markovian invariant distribution without satisfying \eref{eq:zadz1} for some $(D,U)$ and all $(a,b,c)$.\end{rem}
From Lemma 16.2 in Toom \& al. \cite{dkt}, we can deduce easily the following proposition:
\begin{pro}\label{pro:16.2} In the positive rate case, the law of  $(D,U)$-HZMC is invariant under $T$ iff both conditions \eref{eq:zadz1} and \eref{eq:zadz2} holds.
\end{pro}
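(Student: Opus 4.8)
The plan is to prove both implications by computing directly, from the defining formula \eref{eq:Mar-ZZ}, the two one-line marginals of a $(D,U)$-HZMC together with the conditional law of the top line $S_{t+1}$ given the bottom line $S_t$, and then matching these against the PCA dynamics. Summing \eref{eq:Mar-ZZ} over the top coordinates $(b_i)$ and using that $D$ and $U$ are stochastic, the factors telescope through $\sum_{b_i}D_{a_i,b_i}U_{b_i,a_{i+1}}=(DU)_{a_i,a_{i+1}}$ and $\sum_{b_n}D_{a_n,b_n}=1$, so $S_t$ is a $DU$-MC with initial law $\rho=\rho DU$. An analogous telescoping, now summing over the $(a_i)$, shows that $S_{t+1}$ is a $UD$-MC (with initial law $\rho D$). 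Dividing the joint law by the marginal of $S_t$ gives
\[\P\big((b_i)\mid(a_i)\big)=D_{a_n,b_n}\prod_{i=0}^{n-1}\frac{D_{a_i,b_i}U_{b_i,a_{i+1}}}{(DU)_{a_i,a_{i+1}}},\]
a product over columns whose $i$-th factor depends only on the triple $(a_i,a_{i+1},b_i)$.

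For the implication $(\Leftarrow)$, assume \eref{eq:zadz1} and \eref{eq:zadz2}. By \eref{eq:zadz1} each interior factor above equals $\TT{a_i}{a_{i+1}}{b_i}$, so on the bi-infinite line (where the boundary factor $D_{a_n,b_n}$ disappears) the conditional law of $S_{t+1}$ given $S_t$ is exactly the product PCA law $\prod_i\TT{a_i}{a_{i+1}}{b_i}$; hence applying $T$ to the $DU$-MC line-measure reproduces the $(D,U)$-HZMC joint law. It remains to check that $S_{t+1}$ has the same line-law as $S_t$: the top line is a $UD$-MC, which \eref{eq:zadz2} turns into a $DU$-MC; since a positive kernel has a unique stationary law and both $\rho$ and $\rho D$ are stationary for $DU$ (the latter because $(\rho D)(UD)=\rho D$), they coincide, so $S_{t+1}$ is a $DU$-MC with the same initial law $\rho$ as $S_t$, and invariance on $\HZ$ follows.

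For $(\Rightarrow)$, assume the HZMC law is invariant under $T$. Then the conditional law of $S_{t+1}$ given $S_t$ must be the PCA law $\prod_i\TT{a_i}{a_{i+1}}{b_i}$; equating it with the product displayed above and isolating one column (for instance by forming the ratio of two top configurations differing only in coordinate $b_i$, then restoring the normalisations $\sum_{b_i}\TT{a_i}{a_{i+1}}{b_i}=1$ and $\sum_{b_i}D_{a_i,b_i}U_{b_i,a_{i+1}}=(DU)_{a_i,a_{i+1}}$) yields \eref{eq:zadz1} for every interior triple, hence for all $(a,b,c)$. Moreover the image line $S_{t+1}$ is at once the bottom line of the next zigzag, so a $DU$-MC, and a $UD$-MC; equating these two positive stationary line-measures and using that such a measure determines its kernel uniquely gives $DU=UD$, which is \eref{eq:zadz2}.

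The hard part will be the two identification steps, both of which lean on the positive-rate hypothesis: passing from equality of product measures to the pointwise identity \eref{eq:zadz1} by isolating a single column, and deducing $DU=UD$ from the coincidence of a $DU$-MC and a $UD$-MC line-law. Each also needs a careful treatment of the bi-infinite line so that the boundary term $D_{a_n,b_n}$ does not interfere with the column-by-column matching.
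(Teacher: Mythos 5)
Your proof is correct and takes essentially the same route as the paper's: the necessity direction is exactly the conditional-law/line-marginal identification sketched in Section \ref{sec:MHZZ} (top line a $UD$-MC, bottom a $DU$-MC, both first lines of zigzags), and your sufficiency step is the probabilistic repackaging of the telescoping computation in the proof of Theorem \ref{thm:clzz}, which subsumes Proposition \ref{pro:16.2}. The one point where you are more explicit than the paper is welcome: you justify $\rho D=\rho$ via $(\rho D)(UD)=\rho D$, commutation, and uniqueness of the stationary law of the positive kernel $DU$, whereas the paper uses this silently when asserting that the first parenthesis $\sum_{a_0}\rho_{a_0}D_{a_0,b_0}$ ``equals $\rho_{b_0}$''.
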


This Proposition has a counterpart for positive rate PCA defined on more general lattices as $\mathbb{Z}^d$, with more general neighbourhood, where what are considered are the cases where a Gibbs measure defined on a pair of two (time) consecutive configurations is invariant. The analogous of \eref{eq:zadz1} in this setting connects the transition matrix with the potential of the Gibbs measure, the role played by $DU=UD$ is replaced by the quasi-reversibility of the global MC $S_0,S_1,\dots$ under an invariant distribution. Reversibility implies that only symmetric Gibbs measure appears (for a certain notion of symmetry). We send the interested reader to  Vasilyev \cite{vas}, Toom \& al. \cite[section 18]{dkt}, Dai Pra \& al. \cite{PLR}, PhD thesis of Louis \cite{PYL} (see also Marcovici \cite[section 1.4]{IM}) for additional details. \par
These notions of reversibility and quasi-reversibility when the ambient space is $\mathbb{Z}$ are crucial here, since PCA having this property (for some initial distributions) correspond to those for which our main Theorems \ref{thm:clzz} and \ref{theo:HZcolors} apply. We discuss this longer in Section \ref{sec:Gibbs}.

\subsubsection{Content}

Some elementary facts about MC often used in the paper are recalled in Section \ref{sec:App}.
Section \ref{sec:MHZ} contains Theorem \ref{theo:HZcolors} which gives the full characterisation of PCA with positive rate (and beyond) having a Markov distribution as invariant measure on $\HZ$. It is one of the main contributions of the paper. This goes further than Proposition \ref{pro:16.2}  (or Theorem \ref{thm:clzz}) since the condition given in Theorem \ref{theo:HZcolors} is given in terms of the transition matrix only. This condition is reminiscent of the conditions obtained in mathematical physics to obtain an integrable system, conditions that are in general algebraic relations on the set of parameters.
Theorem \ref{theo:extension} extends the results of Theorem \ref{theo:HZcolors} to a class of PCA having some non positive rate TM.   \par
Section \ref{sec:MHZn} contains Theorem \ref{theo:HZcolors-2} which gives the full characterisation of PCA with positive rate having a Markov distribution as invariant measure on $\HZ(n)$.\par

The rest of Section \ref{sec:ACM} is devoted to the conditions on $T$ under which Markov distribution are invariant measure on $\H$ and $\H(n)$. Unfortunately the condition we found are stated under some (finite) system of equations relating the TM $T$ of a PCA and the MK of the Markov distributions. Nevertheless this systematic approach sheds some lights on the structure of the difficulties: they are difficult problems of algebra! Indeed the case that can be treated completely, for example the case where the invariant distribution is a product measure and the TM $T$ symmetric (that is for any $a,b,c$, $\TT{a}{b}{c}=\TT{b}ac$) need some algebra not available in the general case. The present work leads to the idea that full characterisations of the TM $T$ having Markov distribution as invariant measure on $\H$ and $\H(n)$ involve some combinatorics (of the set $\{(a,b,c)~: \TT{a}{b}{c}=0 \}$) together with some linear algebra considerations as those appearing in Proposition \ref{pro:PS} and in its proof, and in Section \ref{sec:MID}.\par
In Section \ref{sec:RM} we discuss the different conclusions we can draw from the Markovianity of an invariant distribution of a PCA with TM $T$ on one of the structure $\H$, $\H(n)$, $\HZ$ and $\HZ(n)$, on the other structures (which is summed up in Figure \ref{fig:imp}). Apart the fact that this property on $\HZ$ implies that on $\H$ (and $\HZ(n)$ implies that on $\H(n)$) all the other implications are false.\par
Last, Section \ref{sec:PTHZ} is devoted to the proof of Theorems \ref{theo:HZcolors}, \ref{theo:extension} and \ref{theo:HZcolors-2}.

\section{Algebraic criteria for Markovianity}
\label{sec:ACM}

\subsection{Markov chains: classical facts and notation}
\label{sec:App}

We now recall two classical results of probability theory for sake of completeness.
\begin{pro}\label{pro:PF}[Perron-Frobenius]
Let $A = [\, a_{i,j}\,]_{1\leq i,j \leq n}$  be an $n \times n$ matrix with positive entries and $\Lambda=\{\lambda_i, 1\leq i \leq n\}$ be the multiset of its eigenvalues. Set $m=\max |\lambda_i|>0$ the maximum of the modulus of the eigenvalues of $A$. The positive real number $m$ is a simple eigenvalue for $A$ called the Perron eigenvalue of $A$; all other eigenvalues $\lambda \in \Lambda \setminus \{m\}$ satisfy $|\lambda|<m$. 
The eigenspace associated to $m$ has dimension 1, and the associated left (resp. right) eigenvectors $L= [\, \ell_i\,]_{1\leq i \leq n}$ (resp. $R={}^t [\, r_i\,]_{1\leq i \leq n}$) can be normalised such that its entries are positive. 
We have $\lim_{k \rightarrow \infty} A^k/m^k = RL$ for $(L,R)$ moreover normalised so that $LR$ = 1. We will call Perron-LE (resp Perron-RE) these vectors $L$ and $R$. We will call them stochastic Perron-LE (or RE) when they are normalised to be probability distributions.  We will denote by $\ME(A)$ the maximum eigenvalue of the matrix $A$, and call it the Perron eigenvalue.
\end{pro}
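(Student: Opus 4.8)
The plan is to follow the classical route: a fixed-point argument for existence, a comparison principle supplied by the left eigenvector, and finally a spectral decomposition for the limit. First I would produce a strictly positive right eigenvector. Let $\Delta=\{x\in\reals^n : x_i\geq 0,\ \sum_i x_i=1\}$ be the standard simplex. Since every entry of $A$ is positive, the map $x\mapsto Ax/\|Ax\|_1$ is continuous from $\Delta$ to $\Delta$ and sends $\Delta$ into its relative interior, so Brouwer's fixed point theorem gives $R\in\Delta$ with $AR=mR$, where $m=\|AR\|_1>0$; the identity $R=AR/m$ then forces every coordinate of $R$ to be strictly positive. Applying the same argument to $\tran{A}$ yields a strictly positive left eigenvector $L$ for some positive eigenvalue, and evaluating $LAR$ in two ways shows the two eigenvalues agree, so $LA=mL$ with $L>0$. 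To see that $m=\ME(A)$, take any eigenpair $(\lambda,v)$ with $v\neq 0$ and apply moduli entrywise to $\lambda v=Av$, getting $|\lambda|\,|v|\leq A|v|$ componentwise with $|v|=(|v_i|)_i$; pairing against $L>0$ gives $|\lambda|\,(L\cdot|v|)\leq L\cdot(A|v|)=m\,(L\cdot|v|)$, and $L\cdot|v|>0$ yields $|\lambda|\leq m$.

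Next I would establish simplicity. For geometric simplicity I use a sliding argument: given any real eigenvector $y$ for $m$, pick the extremal $t$ for which $R-ty\geq 0$ has a vanishing coordinate; then $R-ty$ is a nonnegative eigenvector for $m$ with a zero entry, yet $R-ty=A(R-ty)/m$ would be strictly positive unless $R-ty=0$, so $y$ is a scalar multiple of $R$ and the eigenspace is one-dimensional. For algebraic simplicity I rule out a Jordan block of size $\geq 2$: a generalized eigenvector $w$ with $(A-m\Id)w=R$ would give $0=(LA-mL)w=L(A-m\Id)w=LR>0$, a contradiction, so $m$ is a simple root of the characteristic polynomial.

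For strict dominance, suppose $\lambda$ is an eigenvalue with $|\lambda|=m$ and eigenvector $v$. Then the inequality $|\lambda|\,|v|\leq A|v|$ becomes an equality after pairing with $L$, so $L\cdot(A|v|-m|v|)=0$ with $A|v|-m|v|\geq 0$ and $L>0$ force $A|v|=m|v|$; hence $|v|$ is a nonnegative eigenvector for $m$ and, by geometric simplicity, $|v|$ is a positive multiple of $R$, in particular $|v|>0$. Then the equality case of the triangle inequality $|\sum_j a_{ij}v_j|=\sum_j a_{ij}|v_j|$, together with $a_{ij}>0$, forces all the $v_j$ to share one phase, so $v$ is a scalar multiple of $R$ and $\lambda=m$. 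Thus every eigenvalue other than $m$ satisfies $|\lambda|<m$. Finally, normalising $LR=1$ and setting $P=RL$, one checks $P^2=P$ and $AP=PA=mP$; with $Q=\Id-P$ one has $PQ=QP=0$, $A=mP+AQ$, and $(mP)(AQ)=(AQ)(mP)=0$, so $A^k=m^kP+(AQ)^k$. The spectrum of $AQ$ is $\{0\}\cup(\Lambda\setminus\{m\})$, whose moduli are all below $m$, so $(AQ)^k/m^k\to 0$ and $A^k/m^k\to P=RL$.

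The main obstacle is the upgrade from $|\lambda|\leq m$ to the strict inequality for the non-Perron eigenvalues: this is precisely the place where positivity of the entries, rather than mere nonnegativity, is indispensable, entering through the equality case of the triangle inequality and the resulting phase-alignment. The second delicate point is algebraic (not merely geometric) simplicity, but the strict positivity of the pairing $LR$ settles it at once.
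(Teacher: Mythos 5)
The paper offers no proof of this statement: Proposition \ref{pro:PF} is explicitly recalled as a classical result ``for sake of completeness,'' so there is no internal argument to compare yours against. Your proof is a correct, essentially self-contained rendition of the standard route: Brouwer's fixed point theorem on the simplex for a positive right eigenvector $R$ (and, via $\tran{A}$, a positive left eigenvector $L$, with $LAR$ evaluated both ways to match the eigenvalues), the pairing against $L>0$ to get $|\lambda|\leq m$, the sliding argument for geometric simplicity, the $LR>0$ obstruction to a Jordan chain for algebraic simplicity, the equality case of the triangle inequality (phase alignment, using strict positivity of the $a_{ij}$) for strict dominance, and the spectral projector $P=RL$ with $A^k=m^kP+(AQ)^k$ for the limit. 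Three small points deserve tightening. First, in the sliding argument $y$ must be taken real — legitimate since $m$ is real, so the real and imaginary parts of any eigenvector for $m$ are again eigenvectors — and replaced by $-y$ if need be so that some coordinate of $y$ is positive, which guarantees the extremal $t$ exists and is finite. Second, your claim that the spectrum of $AQ$ is $\{0\}\cup(\Lambda\setminus\{m\})$ deserves its one-line justification: $\ker(L)=\mathrm{ran}(Q)$ is $A$-invariant (from $LA=mL$), $\mathbb{C}^n=\mathrm{span}(R)\oplus\ker(L)$, and $AQ$ vanishes on $\mathrm{span}(R)$ while agreeing with $A$ on $\ker(L)$. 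Third, ``moduli all below $m$'' does not by itself give $(AQ)^k/m^k\to 0$, since $AQ$ need not be diagonalisable; you should invoke Gelfand's formula $\lVert (AQ)^k\rVert^{1/k}\to\rho(AQ)<m$, or the Jordan-form bound $\lVert (AQ)^k\rVert=O\bigl(k^{n}\rho(AQ)^k\bigr)$, to conclude. With these routine repairs the argument is complete and proves everything the proposition asserts, including the normalisation claims about $L$, $R$, and the limit $A^k/m^k\to RL$ used throughout the paper.
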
 
One can extend this theorem to matrices $A$ for which there exists $k\geq 1$ 
such that all coefficients of $A^k$ are positive. These matrices are called \it primitive \rm in the literature. 
\begin{pro}\label{pro:invariante-distribution}  
Let $P$ be a MK on $E_{\kappa}$, for some $\kappa\geq 1$ with a unique invariant measure $\pi$; this invariant measure can be expressed in terms of the coefficients of $P$ as follows: 
\[\pi_y = \frac{\det\l((\Id^{[\kappa]}-P)^{\{y\})}\r)} {\sum_x \det\l((\Id^{[\kappa]}-P)^{\{x\})}\r)},\]
where  $P^{\{y\}}$  stands for $P$ where have been removed the $y$th column and row, and where $\Id^{[\kappa]}$ is the identity matrix of size $\kappa+1$.
\end{pro}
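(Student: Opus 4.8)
The plan is to read the two defining relations of $\pi$ as statements about the kernel of the matrix $L := \Id^{[\kappa]} - P$, and then extract the stated determinantal formula from the \emph{adjugate} (classical adjoint) of $L$. First I would record two structural facts. Since $P$ is a Markov kernel each of its rows sums to $1$, so each row of $L$ sums to $0$; equivalently $L\,\mathbf 1 = 0$, where $\mathbf 1$ is the all-ones column vector. Hence $\det L = 0$ and $\mathbf 1$ lies in the right kernel of $L$. On the other side, $\pi P = \pi$ rewrites as $\pi L = 0$, so $\pi$ lies in the left kernel of $L$. The hypothesis that the invariant measure is \emph{unique} means precisely that this left kernel is one-dimensional; by rank-nullity the right kernel is then one-dimensional as well, so $\mathbf 1$ spans it and $\rank L = \kappa$ (that is, $n-1$ with $n = \kappa+1$).

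Next I would bring in the adjugate $\mathrm{adj}(L)$, characterised by $L\,\mathrm{adj}(L) = \mathrm{adj}(L)\,L = (\det L)\,\Id^{[\kappa]} = 0$, and by the fact that its $(y,x)$ entry is $(-1)^{x+y}$ times the minor of $L$ obtained by deleting row $x$ and column $y$. The two identities say that every column of $\mathrm{adj}(L)$ lies in the right kernel of $L$ and every row lies in the left kernel; by the previous paragraph the columns are therefore scalar multiples of $\mathbf 1$ and the rows are scalar multiples of $\pi$. Since $\rank L = n-1$, the adjugate has rank exactly $1$, so these two constraints are compatible only if a single scalar $\alpha$ works throughout, giving $\mathrm{adj}(L)_{y,x} = \alpha\,\pi_x$ for all $y,x$.

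The formula then drops out by reading the diagonal. The diagonal entry $\mathrm{adj}(L)_{y,y}$ equals the minor of $L$ obtained by deleting row $y$ and column $y$ (the sign is $(-1)^{2y}=1$), which is exactly $\det\big((\Id^{[\kappa]}-P)^{\{y\}}\big)$; on the other hand it equals $\alpha\,\pi_y$. Hence $\det\big((\Id^{[\kappa]}-P)^{\{y\}}\big) = \alpha\,\pi_y$ for every $y$, and summing over $y$ while using $\sum_y \pi_y = 1$ identifies $\alpha = \sum_x \det\big((\Id^{[\kappa]}-P)^{\{x\}}\big)$. Dividing yields the claimed expression, provided $\alpha \neq 0$; but $\alpha = 0$ would force $\mathrm{adj}(L) = 0$, contradicting $\rank L = n-1$, so the denominator is genuinely nonzero.

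I do not anticipate a serious obstacle: the argument is essentially Cramer's rule packaged through the adjugate. The one point needing care, rather than difficulty, is the passage from the two separate kernel conditions on the rows and columns of $\mathrm{adj}(L)$ to the single rank-one form $\mathrm{adj}(L)_{y,x}=\alpha\,\pi_x$; here one must invoke that $\rank L = n-1$ forces $\rank \mathrm{adj}(L) = 1$ (not $0$), which is what guarantees a common scalar $\alpha$ simultaneously for all rows and columns. A fully elementary alternative, avoiding the adjugate, would apply Cramer's rule directly to the linear system obtained from $\pi L = 0$ after replacing one of its (redundant) equations by the normalisation $\sum_x \pi_x = 1$; this produces the same ratios of minors but is more computational.
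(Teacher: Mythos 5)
Your proof is correct, and there is in fact nothing in the paper to measure it against: the paper states this proposition among classical results ``recalled for sake of completeness'' and supplies no proof, so your adjugate argument serves as a legitimate self-contained justification. The chain of deductions is sound: $L\mathbf 1=0$ and $\pi L=0$ for $L=\Id^{[\kappa]}-P$; one-dimensional left and right kernels; columns of $\mathrm{adj}(L)$ proportional to $\mathbf 1$ and rows proportional to $\pi$, hence $\mathrm{adj}(L)_{y,x}=\alpha\,\pi_x$; nonvanishing of $\mathrm{adj}(L)$ because a matrix of rank $\kappa$ (i.e.\ $n-1$ with $n=\kappa+1$) has adjugate of rank exactly $1$; and finally reading the diagonal, where your sign bookkeeping $(-1)^{2y}=1$ for the $(y,y)$ entry is right, followed by summation against $\sum_y\pi_y=1$. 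The one point you state more glibly than it deserves is the very first: that uniqueness of the invariant probability measure ``means precisely'' that the left kernel of $L$ is one-dimensional. This is not pure linear algebra --- a priori a two-dimensional left kernel could contain only one nonnegative normalised vector, since the argument of perturbing $\pi$ inside the kernel can fail when $\pi$ lacks full support. It is rescued by the standard structure theory of finite Markov chains: the geometric multiplicity of the eigenvalue $1$ of a stochastic matrix equals the number of recurrent (closed communicating) classes, each of which carries its own invariant probability measure, so a kernel of dimension at least $2$ forces at least two invariant laws. Note also that you only need this one direction (uniqueness $\Rightarrow$ dimension $1$), not the stated equivalence. With that one-sentence citation added, the proof is complete; your closing remark that Cramer's rule applied to $\pi L=0$ with one redundant equation replaced by $\sum_x\pi_x=1$ yields the same ratios of minors is also accurate and would be an acceptable, more computational, alternative.
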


\subsection{Markovianity of an invariant distribution on $\HZ$: complete solution}
\label{sec:MHZ}
Here is a slight generalisation of Proposition \ref{pro:16.2}. It gives a condition for the law of a $(D,U)$-HZMC to be invariant by $T$ in terms of the 3-tuple $(D,U,T)$. 
\begin{theo} \label{thm:clzz}
Let $A:=(\Z, E_\kappa,N,T)$ be a PCA seen as acting on  ${\cal M}(E_\kappa^{\HZ})$ and $(D,U)$ a pair of MK such that for any $0\leq a,b \leq \kappa$, $(DU)_{a,b}>0$. 
The law of the $(D,U)$-HZMC is invariant by $A$ iff the two following conditions are satisfied:\\~\\
\begin{cond}\label{cond:DUT}
$\displaystyle \left\{\begin{array}{l} - \ \textrm{if } \TT{a}bc>0 \textrm{ then }\TT{a}{b}{c} =\dis\frac{D_{a,c}U_{c,b}}{(DU)_{a,b}},\\
- \ \textrm{if } \TT{a}bc=0 \textrm{ then } \TT{a}bc=D_{a,c}U_{c,b}=0.
\end{array}\right.$
\end{cond}~\\~\\
\begin{cond}\label{cond:commut}
$DU=UD$.
\end{cond}
\end{theo}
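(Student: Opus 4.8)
The plan is to compute the law of the configuration $S_{t+1}$ produced when $A$ acts on a $(\rho,D,U)$-HZMC $S_t$, and to compare it term by term with the $(\rho,D,U)$-HZMC on $\HZ_{t+1}$; here $\rho$ is the invariant distribution of $DU$, which is \emph{unique} and has positive entries by Proposition \ref{pro:PF} since $(DU)_{a,b}>0$. The starting observation is that the lower line $\H_{t+1}$ of $\HZ_t$ carries, under the HZMC, the law of a $(\rho D,UD)$-MC: summing \eref{eq:Mar-ZZ} over the upper-line values $a_i$ gives $\P(S(i,t+1)=b_i,0\le i\le m)=(\rho D)_{b_0}\prod_{i=0}^{m-1}(UD)_{b_i,b_{i+1}}$. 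Generating the new line $\H_{t+2}$ by the PCA rule (so that $c_i:=S(i,t+2)$ has law $\TT{b_i}{b_{i+1}}{c_i}$ and the $c_i$ are conditionally independent) and summing out the single extra value $b_{n+1}$ needed to produce $c_n$, the law of $S_{t+1}$ on the cylinder $0\le i\le n$ is
\[
\sum_{b_{n+1}}(\rho D)_{b_0}\prod_{i=0}^{n}(UD)_{b_i,b_{i+1}}\,\TT{b_i}{b_{i+1}}{c_i}.
\]
The algebraic pivot is the identity we call $(\star)$: for all $a,b,c$, $(DU)_{a,b}\,\TT{a}{b}{c}=D_{a,c}U_{c,b}$. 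Because $(DU)_{a,b}>0$, this single multiplicative identity is exactly \C\ref{cond:DUT}, its two lines corresponding to $\TT{a}{b}{c}>0$ and $\TT{a}{b}{c}=0$.

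For sufficiency, assume $(\star)$ and \C\ref{cond:commut}. First, $\rho D$ is a probability vector invariant for $DU$, since $(\rho D)(DU)=\rho D(UD)=\rho(DU)D=\rho D$ using $DU=UD$ and $\rho=\rho DU$; by uniqueness $\rho D=\rho$. In the displayed cylinder law I then replace each $(UD)_{b_i,b_{i+1}}$ by $(DU)_{b_i,b_{i+1}}$ and apply $(\star)$, so that every factor becomes $D_{b_i,c_i}U_{c_i,b_{i+1}}$; the boundary sum collapses because $\sum_{b_{n+1}}U_{c_n,b_{n+1}}=1$, leaving $(\rho D)_{b_0}\big(\prod_{i=0}^{n-1}D_{b_i,c_i}U_{c_i,b_{i+1}}\big)D_{b_n,c_n}$. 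With $\rho D=\rho$ this is precisely the $(\rho,D,U)$-HZMC on $\HZ_{t+1}$. Since $A$ and both measures are translation invariant, matching the cylinders based at $0$ suffices, and the HZMC is invariant.

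For necessity, suppose the HZMC is invariant, so $S_{t+1}$ has the same law as $S_t$, i.e. the $(\rho,D,U)$-HZMC. Computing the single-site marginal of $\H_{t+1}$ two ways — as $(\rho D)_{b_0}$ from the lower line of $\HZ_t$, and as $\rho_{b_0}$ from the upper line of $\HZ_{t+1}$ — forces $\rho D=\rho$. Comparing the two-site marginals of $\H_{t+1}$ then gives $\rho_{b_0}(UD)_{b_0,b_1}=\rho_{b_0}(DU)_{b_0,b_1}$, and since $\rho_{b_0}>0$ this is $DU=UD$, i.e. \C\ref{cond:commut}. Finally, equating the cylinder $(b_0,c_0)$ (using $\rho D=\rho$, $DU=UD$) yields the consistency relation $\sum_{b_1}(DU)_{b_0,b_1}\TT{b_0}{b_1}{c_0}=D_{b_0,c_0}$; inserting it into the equated cylinder $(b_0,b_1,c_0,c_1)$, then summing the resulting identity over $c_1$ (using $\sum_{c_1}D_{b_1,c_1}=1$) and cancelling the positive factor $\rho_{b_0}$, gives $(\star)$, hence \C\ref{cond:DUT}.

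The difficulty here is organisational rather than deep. One must keep track of the fact that the two lines of a horizontal zigzag carry the kernels $DU$ and $UD$ in \emph{opposite} orders, handle the boundary summation over $b_{n+1}$ without error, and recognise that reconciling the ``lower line of $\HZ_t$'' description with the ``upper line of $\HZ_{t+1}$'' description is exactly what simultaneously forces $\rho D=\rho$ and $DU=UD$. The only genuinely delicate point is the vanishing entries of $T$: casting the condition in the multiplicative form $(\star)$ is what allows one identity to encode both $\TT{a}{b}{c}=D_{a,c}U_{c,b}/(DU)_{a,b}$ when $\TT{a}{b}{c}>0$ and $D_{a,c}U_{c,b}=0$ when $\TT{a}{b}{c}=0$, so that no division by a possibly vanishing quantity is ever required.
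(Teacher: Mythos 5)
Your proof is correct and takes essentially the same route as the paper's: push the $(\rho,D,U)$-HZMC through one step of the PCA, use the multiplicative identity $(DU)_{a,b}\,\TT{a}{b}{c}=D_{a,c}U_{c,b}$ to collapse the cylinder weights for sufficiency, and match the two descriptions of the line $\H_{t+1}$ (lower line of $\HZ_t$ versus upper line of $\HZ_{t+1}$) to extract \C\ref{cond:commut} and then \C\ref{cond:DUT} for necessity. The only difference is one of explicitness: you justify $\rho D=\rho$ via $DU=UD$ and uniqueness of the invariant law of $DU$ (positivity of $(DU)_{a,b}$), a step the paper uses silently when asserting that $\sum_{a_0}\rho_{a_0}D_{a_0,b_0}=\rho_{b_0}$.
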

Lemma 16.2 in Toom \& al. \cite{dkt} asserts that if two MK $D$ and $U$ (with positive coefficients) satisfy $DU=UD$, then the $DU$-HMC is stable by the TM $T$ defined by
$\TT{a}{b}{c}=D_{a,c}U_{c,b}/(DU)_{a,b}.$
These authors do not consider HZMC but only MC. Vasilyev \cite{vas} considers a similar question, expressed in terms of Gibbs measure (see discussion in Section \ref{sec:Gibbs}).
\begin{rem} \label{rem:cas_taux_positif}
If $T$ is a positive rate TM then if the law of a HZMC with MK $M=DU$ is invariant by $T$ then $M_{a,b}>0$ for any $0\leq a,b\leq \kappa$ since any finite configuration has a positive probability to occur at time  1 whatever is the configuration at time 0. If a product measure $\rho^\mathbb{Z}$ is invariant then  $\rho_a>0$ for  any $0\leq a\leq \kappa$.
\end{rem}
\begin{rem}\label{rem:dazq} 
$\bullet$ Under \C \ref{cond:DUT}, if for some $a,b,c$ we have
$\TT{a}bc=0$ then either all the $\TT{a}{b'}{c}=0$ for $b'\in E_\kappa$ or all the $\TT{a'}{b}{c}=0$ for $a'\in E_\kappa$. \\
$\bullet$ Notice that the we do not assume the positive rate condition but something weaker $(DU)_{a,b}>0$; under this condition, the $DU$-MC admits a unique invariant distribution.\par
 Without the condition $(DU)_{a,b}>0$, for any $a,b$, some problems arise. Assume the law of a $(D,U)$-HZMC is invariant under $T$ but $(DU)_{a,b}=0$. Under the invariant distribution, the event $\{S(i,t)=a, S(i+1,t)=b\}$ does not occur a.s., and then the transitions $\left(\TT{a}bc, c \in E_\kappa\right)$ do not matter. For this reason, they do not need to satisfy \C \ref{cond:DUT}. In other words the condition $(DU)_{a,b}>0$ implies that each transition $\TT{a}{b}{x}$ will play a role (for some $x$). Without this condition ``pathological cases'' for the behaviour of PCA are possible as discussed in Section \ref{sec:KR}.  For example if $\TT{a}{a}a=1$ the constant process $a$ is invariant. Hence sufficient conditions for Markovianity can be expressed on only one single value $\TT{a}bc$ and only few values of $D$ and $U$ (if $\TT{1}{1}{1}=1$, $D_{1,1}=U_{1,1}=1$, the additional conditions $DU=UD$ and  $D_{a,c}U_{c,b}/(DU)_{a,b}=\TT{a}bc$  for $(a,b,c)\neq (1,1,1)$ are not needed). Further,  designing necessary and sufficient conditions for general PCA is rather intricate since a PCA on $E_\kappa$ can be trapped on a subset of $E_\kappa^\mathbb{Z}$, subset which may depend on the initial configuration. Stating a necessary and sufficient condition for a PCA to possess the law of a MC as invariant distribution, is equivalent to state a  necessary and sufficient condition for the existence of ``one trap'' with this property. When $\kappa$ grows, the combinatorics of the set of traps becomes more and more involved. 
\end{rem}

\begin{proof}[Proof of Theorem \ref{thm:clzz}] 
Assume first that $S_0$ is a $(D,U)$-HZMC, whose distribution is invariant by $A$. Using the argument developed in Section \ref{sec:MHZZ} we check that    $\TT{a}{b}{c} =\dis\frac{D_{a,c}U_{c,b}}{(DU)_{a,b}}$ 
(again when $(DU)_{a,b}>0$, the invariant law of the $DU$-MC has full support) and that $DU=UD$.\par  
Assume now that \C \ref{cond:DUT}  and \C \ref{cond:commut} hold for $D$ and $U$ (with $(DU)_{a,b}>0$, for any $a,b$).  Let us show that the law of the $(D,U)$-HZMC is invariant by $A$. 
For this start from the $(D,U)$-HZMC on $\HZ_0$, meaning that for any $a_i,b_i \in E_\kappa$,
\[\P(S(i,0)=a_i,i=0,\dots,n+1, S(i,1)=b_i,i=0,\dots,n)=\rho_{a_0}\prod_{i=0}^n D_{a_i,b_i}U_{b_i,a_{i+1}},\]
and let us compute the induced distribution on $\HZ_1$. Assume that the configuration on $\HZ_1$ is obtained by a transition of the automata from $\HZ_0$
\be
\P\l(
     \begin{array}{l}
      {S(i,1)=b_i,i=0,\dots,n},\\
      {S(i,2)=c_i,i=0,\dots,n-1}
     \end{array}
   \r)
&=&\sum_{(a_i, 0\leq i \leq n+1)}\rho_{a_0} \l(\prod_{i=0}^n D_{a_i,b_i}U_{b_i,a_{i+1}}\r)\l(\prod_{i=0}^{n-1}\TT{b_i}{b_{i+1}}{c_i}\r)\\
&=& \l(\sum_{a_0} \rho_{a_0}D_{a_0,b_0}\r)\l(\prod_{i=0}^{n-1} \sum_x (U_{b_i,x}D_{x,b_{i+1}})\r)\l(\sum_x U_{b_{n},x}\r) \\
&& \times \prod_{i=0}^{n-1}  \l(\frac{D_{b_i,c_i}U_{c_i,b_{i+1}}}{(DU)_{b_i,b_{i+1}}}\r)
\ee
The first parenthesis equals $\rho_{b_0}$, the second $\displaystyle \prod_{i=0}^{n-1}(UD)_{b_i,b_{i+1}}$, the third 1, and the denominator of the fourth simplify when multiplied by the second since $DU=UD$. This gives the desired result.
\end{proof}

We now define some quantities needed to state Theorem \ref{theo:HZcolors}.\par
Let $\nu:=\nu[T]$ be the stochastic Perron-LE of the stochastic matrix 
\[Y:=Y[T]=\bma\dis\TT{i}{i}{j}\ema_{0\leq i,j\leq \kappa}\]
and $\gamma:=\gamma[T]$ be the  stochastic Perron-LE of the matrix
\[X:=X[T]=\bma \dis\frac{\TT{a}{a}{0}\,\nu_a}{\TT{a}{d}{0}}\ema_{0\leq d,a\leq \kappa}\]
associated with $\lambda:=\lambda[T]>0$ the Perron-eigenvalue of $X$ (this matrix is defined in the positive rate case). Then $(\gamma_i, 0\leq i \leq \kappa)$ is solution to:
\ben\label{eq:qd}
\sum_d   \frac{\gamma_d}{\TT{a}{d}{0}}=\lambda\frac{\gamma_a}{\TT{a}{a}{0}\nu_a}.
\een
By Proposition \ref{pro:invariante-distribution},  $\nu$ and $\gamma$ can be computed in terms of $T$ (but difficulties can of course arise for effective computation starting from that of $\lambda$).
Define further for any $\eta=(\eta_a, 0\leq a \leq \kappa)\in {\cal M}^\star(E_\kappa)$ (law on $E_\kappa$ with full support), the MK $D^{\eta}$ and $U^{\eta}$:
 \ben\label{eq:iqb}
D_{a,c}^{\eta}=\frac{\sum_\ell \frac{  \eta_{\ell}}{\TT{a}{\ell}{0}}\TT{a}{\ell}{c} }{\sum_{b'}\frac{\eta_{b'}}{\TT{a}{b'}{0}}},~~U_{c,b}^\eta       = \frac{ \frac{\eta_{b}}{\TT{0}{b}{0}}\TT{0}{b}{c}}{ \sum_{b'} \frac{ \eta_{b'}}{\TT{0}{b'}{0}}\TT{0}{b'}{c}  },\textrm{ for } 0\leq a,b,c \leq \kappa.
\een
The indices are chosen to make easier some computations in the paper. In absence of specification the sums are taken on $E_\kappa$.
\begin{theo}\label{theo:HZcolors}  
 Let $A:=(\Z, E_\kappa,N,T)\in \PCA(\mathbb{Z},E_\kappa)^\star$ be a positive rate PCA seen as acting on ${\cal M}(E_\kappa^{\HZ})$. The PCA $A$ admits the law of a HZMC as invariant distribution on $\HZ$ iff $T$ satisfies the two following conditions:~\\
\begin{cond}\label{cond:gibbs-1}
for any $ 0\leq a,b,c\leq \kappa, \TT{a}bc=\dis\frac{\TT000 \TT{a}b0\TT{a}{0}c\TT{0}{b}c }{\TT{a}00\TT{0}b0\TT{0}0c}$,
\end{cond}~\\~\\  
 \begin{cond}\label{cond:UDDU}
the equality $D^ \gamma U^ \gamma=U^ \gamma D^ \gamma$ holds (for $\gamma$ defined in \eref{eq:qd}, and $D^\gamma$ and $ U^\gamma$ defined in  \eref{eq:iqb}).
\end{cond}~\\~\\
In this case the  $(D^\gamma,U^\gamma)$-HZMC is invariant under $A$ and the common invariant distribution for the MC with MK  $D^\gamma, U^\gamma , D^\gamma U^\gamma$ or $U^\gamma D^\gamma$  is $\rho=[\, \gamma_i \mu_i\,]_{0\leq i \leq \kappa}$  where $\mu={}^t [\,\mu_i \,]_{0\leq i \leq \kappa}$ is the Perron-RE of $X$ normalised so that $\rho$ is a probability distribution. \\
When $\kappa=1$ (the two-colour case), when  \C \ref{cond:gibbs-1} holds, then so does \C \ref{cond:UDDU}, and then the only condition is \C \ref{cond:gibbs-1} (which is equivalent to \eref{eq:sddpcs}).
\end{theo}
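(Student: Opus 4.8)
The plan is to deduce Theorem \ref{theo:HZcolors} from the more explicit criterion of Theorem \ref{thm:clzz}. By that theorem the PCA admits the law of a $(D,U)$-HZMC as invariant distribution if and only if there exist stochastic kernels $D,U$ with $\TT{a}bc=D_{a,c}U_{c,b}/(DU)_{a,b}$ and $DU=UD$ (in the positive rate case any invariant $M=DU$ has positive entries, cf. Remark \ref{rem:cas_taux_positif}, so the hypothesis $(DU)_{a,b}>0$ is met). The problem thus splits into: (a) for which $T$ does the factorisation admit a stochastic solution, and (b) among those solutions, which commute. First I would settle (a). Forming the product of the factorisation at $(a,b,c)$ and $(0,0,c)$ divided by its values at $(a,0,c)$ and $(0,b,c)$, all the $D,U$ factors cancel and the right-hand side collapses to $\frac{(DU)_{a,0}(DU)_{0,b}}{(DU)_{a,b}(DU)_{0,0}}$, which is independent of $c$; equating the value at a general $c$ with the value at $c=0$ yields exactly \C \ref{cond:gibbs-1}. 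Conversely, assuming \C \ref{cond:gibbs-1}, I would substitute the Gibbs form of $\TT{a}{\ell}{c}$ into \eref{eq:iqb} and check by direct simplification that, for \emph{every} full-support $\eta$, the pair $(D^\eta,U^\eta)$ is stochastic and satisfies $D^\eta_{a,c}U^\eta_{c,b}/(D^\eta U^\eta)_{a,b}=\TT{a}bc$, the computation producing the clean formula $(D^\eta U^\eta)_{a,b}=\frac{\eta_b/\TT{a}b0}{\sum_{b'}\eta_{b'}/\TT{a}{b'}0}$. Solving the factorisation for $D$ and $U$ by setting $b=0$ and $a=0$ shows moreover that \emph{all} stochastic solutions arise this way, i.e. $(D,U)=(D^\eta,U^\eta)$ for a unique full-support $\eta$.

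With (a) in hand the sufficiency direction is almost immediate: under \C \ref{cond:gibbs-1} and \C \ref{cond:UDDU} the pair $(D^\gamma,U^\gamma)$ factorises $T$ and commutes, so Theorem \ref{thm:clzz} applies and the $(D^\gamma,U^\gamma)$-HZMC is invariant. To identify its initial law I would verify $\rho=[\,\gamma_i\mu_i\,]_{0\le i\le\kappa}=\rho D^\gamma U^\gamma$: inserting the formula above for $D^\gamma U^\gamma$ and using the defining relation \eref{eq:qd} of $\gamma$ to rewrite the denominator turns $\rho=\rho D^\gamma U^\gamma$ into the eigen-equation $X\mu=\lambda\mu$, which holds because $\mu$ is the Perron-RE of $X$. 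Commutativity then makes $\rho$ invariant for each of $D^\gamma,U^\gamma,D^\gamma U^\gamma,U^\gamma D^\gamma$.

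The necessity direction is where the real work lies. Starting from an invariant $(D,U)$-HZMC, Theorem \ref{thm:clzz} gives the factorisation and $DU=UD$; the factorisation forces \C \ref{cond:gibbs-1} by the telescoping identity above, and by the uniqueness in (a) we may write $(D,U)=(D^\eta,U^\eta)$. It remains to prove that commutativity pins $\eta$ down to $\gamma$, for then \C \ref{cond:UDDU} holds. The mechanism I would use is that $DU=UD$ forces the single-cell marginal $\rho$ to be simultaneously invariant for $D^\eta$ and $U^\eta$: indeed $\rho D^\eta$ is invariant for $U^\eta D^\eta=D^\eta U^\eta$, hence equals $\rho$ by uniqueness of the Perron measure (Proposition \ref{pro:PF}), and then $\rho U^\eta=\rho$ follows. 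Feeding $\rho D^\eta=\rho$ and $\rho U^\eta=\rho$ back into \eref{eq:iqb}, substituting \C \ref{cond:gibbs-1} and eliminating $\rho$ should reproduce, step by step, the two nested eigenproblems built into the statement: first that the relevant diagonal vector is the Perron-LE $\nu$ of $Y=[\TT iij]$, and then that $\eta$ is the Perron-LE $\gamma$ of $X=[\TT aa0\,\nu_a/\TT ad0]$ normalised by \eref{eq:qd}, with $\rho=[\gamma_i\mu_i]$. This extraction of $\nu$, $X$ and $\gamma$ from the marginal-invariance equations — showing the system has the single solution $\eta=\gamma$ — is the main obstacle, and is exactly where the otherwise opaque definitions of $\nu$, $X$, $\gamma$ get justified.

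Finally, for the two-colour case $\kappa=1$ I would note that in \C \ref{cond:gibbs-1} every index choice with some entry equal to $0$ is a tautology, so the only genuine constraint is the choice $a=b=c=1$, which is precisely \eref{eq:sddpcs}. It then remains to show \C \ref{cond:gibbs-1} already implies \C \ref{cond:UDDU}. Since $D^\gamma$ and $U^\gamma$ are $2\times2$ stochastic matrices, $D^\gamma U^\gamma=U^\gamma D^\gamma$ reduces to the single scalar identity $(D^\gamma U^\gamma)_{0,1}=(U^\gamma D^\gamma)_{0,1}$ (the other off-diagonal entry following automatically since $D^\gamma U^\gamma$ and $U^\gamma D^\gamma$ share their trace), and a direct substitution of \eref{eq:iqb} with $\eta=\gamma$ shows this identity to be a consequence of \eref{eq:sddpcs}. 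Hence no extra condition survives and \C \ref{cond:gibbs-1} alone characterises Markovianity when $\kappa=1$.
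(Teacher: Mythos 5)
Your overall skeleton coincides with the paper's: reduce to Theorem \ref{thm:clzz} (with the positive-rate remark justifying $(DU)_{a,b}>0$), characterise the factorisable $T$ by \C \ref{cond:gibbs-1}, parametrise \emph{all} solutions of \C \ref{cond:DUT} as $(D^\eta,U^\eta)$ with $(D^\eta U^\eta)_{a,b}=\bigl(\eta_b/\TT{a}{b}{0}\bigr)/\bigl(\sum_{b'}\eta_{b'}/\TT{a}{b'}{0}\bigr)$, and identify $\rho=[\,\gamma_i\mu_i\,]$ through the eigen-equation for $X$ — all of this matches Lemma \ref{lem:C1C2}, Proposition \ref{pro:zea}, \eref{eq:DU} and \eref{eq:dqz}. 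Your telescoping derivation of \C \ref{cond:gibbs-1}, via the $c$-independence of $\TT{a}{b}{c}\TT{0}{0}{c}/(\TT{a}{0}{c}\TT{0}{b}{c})=(DU)_{a,0}(DU)_{0,b}/((DU)_{a,b}(DU)_{0,0})$, is in fact a slicker route than the paper's (which passes through the symmetric condition \C \ref{cond:gibbs-g}).

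The genuine gap is in the one step that constitutes the heart of the theorem: proving that commutation forces $\eta=\gamma$. You replace the hypothesis $D^\eta U^\eta=U^\eta D^\eta$ by its consequence ``$\rho D^\eta=\rho$ and $\rho U^\eta=\rho$'' and assert that feeding this into \eref{eq:iqb} ``should reproduce'' the eigenproblems for $\nu$ and $\gamma$; this is never carried out, and there are concrete reasons to doubt it goes through as stated. First, your two marginal conditions collapse to one: since $\rho D^\eta U^\eta=\rho$ by definition of $\rho$, the identity $\rho U^\eta=(\rho D^\eta)U^\eta=\rho$ is automatic once $\rho D^\eta=\rho$, so you retain only $\kappa$ scalar constraints from the full commutation hypothesis. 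Second, these constraints involve $\rho$, which is itself only implicitly defined (the Perron-LE of $D^\eta U^\eta$, depending on $\eta$), so ``eliminating $\rho$'' is not a mechanical substitution; and nothing shows $\rho D^\eta=\rho$ admits the \emph{unique} solution $\eta=\gamma$. The paper's mechanism is different and works because it stays free of $\rho$: it takes the diagonal of the commutation relation, $(D^\eta U^\eta)_{a,a}=(U^\eta D^\eta)_{a,a}$, which after substituting \C \ref{cond:gibbs-1} becomes
\[
\frac{g_a\,\eta_a}{\TT{a}{a}{0}}=\sum_c \frac{g_c\,\eta_c}{\TT{c}{c}{0}}\,\TT{c}{c}{a},
\qquad g_a=\Bigl(\sum_d \eta_d/\TT{a}{d}{0}\Bigr)^{-1},
\]
i.e.\ the positive vector $\bigl[\,g_a\eta_a/\TT{a}{a}{0}\,\bigr]$ is a left fixed vector of the positive stochastic matrix $Y$, hence proportional to $\nu$; unfolding $g_a$ then exhibits $\eta$ as a positive left eigenvector of $X$, hence $\eta=\gamma$ by Perron--Frobenius. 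Your route never produces $Y$ (whose entries $\TT{c}{c}{a}$ enter precisely through these diagonal entries of $U^\eta D^\eta$), so the ``nested eigenproblems'' you invoke do not emerge from marginal invariance. The same issue contaminates your $\kappa=1$ argument: you reduce commutation to the off-diagonal identity $(D^\gamma U^\gamma)_{0,1}=(U^\gamma D^\gamma)_{0,1}$, which demands exactly the substitution computation you leave out, whereas the diagonal identities hold automatically at $\eta=\gamma$ (they \emph{are} the equations defining $\gamma$), and for $2\times2$ stochastic matrices equal diagonals force equal matrices — the paper's one-line conclusion.
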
 
Even if \C \ref{cond:UDDU} seems much similar to \C \ref{cond:commut}, it is not! In Theorem \ref{thm:clzz} the question concerns the existence or not of a pair $(D,U)$ satisfying some conditions. In Theorem \ref{theo:HZcolors} the only possible pair $(D,U)$ is identified, it is $(D^\gamma,U^\gamma)$, and the question reduces to know  if the equality $D^\gamma U^\gamma=U^\gamma D^\gamma$ holds or not. \par  

The proof of this theorem is postponed to Section \ref{sec:PTHZ}, as well as the fact that for $\kappa=1$, \C~\ref{cond:UDDU} disappears whilst this fact is far to be clear at the first glance. An important ingredient in the proof is Lemma \ref{lem:C1C2} which says that for a given $T$, \C \ref{cond:gibbs-1} is equivalent to the existence of a pair of MK $(D,U)$ such that \C \ref{cond:DUT} holds.
 \par
By \eref{eq:qd}, \C \ref{cond:UDDU}  can be rewritten 
 \ben
 \label{eq:ead-2}
 \sum_c   \frac{\frac{\TT{c}{c}{0}\nu_c}{\TT{0}{c}{0}}\TT{0}{c}{a}}
               {\l(\sum_{b'} \frac{ \gamma_{b'}}{\TT{0}{b'}{0}}\TT{0}{b'}{a}\r)  }
          \l(\sum_d \frac{  \gamma_{d}}{\TT{c}{d}{0}}\TT{c}{d}{b} \r) 
          =\frac{\TT{a}{a}{0}\nu_a}{\gamma_a}  
           \frac{  \gamma_{b}}{\TT{a}{b}{0}},~~~ \textrm{ for any }0\leq a,b \leq \kappa.
 \een

\begin{rem} Condition \C \ref{cond:gibbs-1} is bit asymmetric. In Lemma \ref{lem:C1C2} we will show that this condition is equivalent in the positive rate case to the following symmetric condition:~\\~\\
\begin{cond}\label{cond:gibbs-g}
\textrm{  for any } $0\leq  a,a',b,b',c,c', \leq \kappa$, \\
\centerline{$\TT{a'}{b'}{c'} \TT{a}{b'}{c} \TT{a}{b}{c'} \TT{a'}{b}{c} = \TT{a}{b}{c} \TT{a}{b'}{c'} \TT{a'}{b'}{c} \TT{a'}{b}{c'}$.}
\end{cond} 
\end{rem}

The following Section, whose title is explicit, discuss some links between our results and the literature.
 
\subsubsection{Reversibility, quasi-reversibility and Theorems \ref{thm:clzz} and \ref{theo:HZcolors}}
\label{sec:Gibbs}
Here is a simple corollary of Theorem \ref{thm:clzz}.
\begin{cor}\label{cor:DUTD} Let $A$ be a positive rate  PCA with TM $T$ and $(D,U)$ a pair of MK. If $(D,U,T)$ satisfies \C \ref{cond:DUT} and \C \ref{cond:commut}, then so does $(U,D,T')$, for 
 $\TT{a}{b}{c}' =\dis\frac{U_{a,c}D_{c,b}}{(UD)_{a,b}}$. 
As a consequence both $T$ and $T'$ let invariant the law $\nu$ of the MC with MK $M=DU=UD$ (under its stationary distribution).
\end{cor}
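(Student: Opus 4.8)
The plan is to separate the two assertions: that $(U,D,T')$ again satisfies \C \ref{cond:DUT} and \C \ref{cond:commut} is a pure symmetry of the defining relations, while the invariance of $\nu$ is a marginalisation followed by a uniqueness argument. First I would record that the positive rate hypothesis upgrades to positivity of $D$ and $U$ separately: since $\TT{a}{b}{c}>0$ for all $a,b,c$, \C \ref{cond:DUT} gives $D_{a,c}U_{c,b}=\TT{a}{b}{c}(DU)_{a,b}>0$ for all $a,b,c$; fixing $a,c$ and picking any $b$ yields $D_{a,c}>0$, and fixing $c,b$ and picking any $a$ yields $U_{c,b}>0$. Hence $M:=DU=UD$ is a positive, thus primitive, stochastic matrix, so by Proposition \ref{pro:PF} its Perron eigenvalue is $1$ and it owns a unique stationary law $\nu$ of full support. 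In particular $(UD)_{a,b}>0$, so $T'$ is well defined, and $\sum_c U_{a,c}D_{c,b}=(UD)_{a,b}$ shows $T'$ is a positive rate transition matrix.

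For the first assertion, the whole point is that the triple $(U,D,T')$ is $(D,U,T)$ with the two kernels exchanged and $T$ replaced by $T'$. The matrix $T'$ was \emph{defined} by $\TT{a}{b}{c}'=U_{a,c}D_{c,b}/(UD)_{a,b}$, which is exactly the first line of \C \ref{cond:DUT} read for $(U,D,T')$; since $T'$ has positive rate, the second line is vacuous, so \C \ref{cond:DUT} holds. And \C \ref{cond:commut} for $(U,D,T')$ reads $UD=DU$, which is literally \C \ref{cond:commut} for $(D,U,T)$. Thus both conditions transfer for free.

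For the consequence I would invoke Theorem \ref{thm:clzz} on both triples (legitimate because $(DU)_{a,b}>0$): the $(D,U)$-HZMC is invariant by $A$ and the $(U,D)$-HZMC is invariant by the PCA $A'$ with matrix $T'$. It then remains to descend from these two-line statements to a statement about the single line $\H$. Feeding a bottom line $H_t$ distributed as the $M$-MC under $\nu$ into $A$, the PCA rule together with \C \ref{cond:DUT} produces the joint law
\[\nu_{a_0}\prod_i (DU)_{a_i,a_{i+1}}\,\TT{a_i}{a_{i+1}}{b_i}=\nu_{a_0}\prod_i D_{a_i,b_i}U_{b_i,a_{i+1}},\]
i.e. the $(D,U)$-HZMC of \eref{eq:Mar-ZZ} with $\rho=\nu$; marginalising over $H_t$ as in the proof of Theorem \ref{thm:clzz} makes $H_{t+1}$ a $UD$-MC with initial law $\nu D$. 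Since $(\nu D)(UD)=\nu(DU)D=\nu D$, the law $\nu D$ is $M$-stationary, hence equals $\nu$, and as $UD=M$ the line $H_{t+1}$ is again the $M$-MC under $\nu$; so $T$ preserves it. The identical computation for $(U,D,T')$, whose top line carries the initial law $\nu U=\nu$, settles the case of $T'$.

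The main obstacle is not computational at all; it is the temptation to read the updated line of the HZMC as having the wrong initial law. The only step requiring care is recognising that the a priori initial law $\nu D$ (respectively $\nu U$) of the updated line and the target law $\nu$ are both stationary for the primitive matrix $M$, and then invoking uniqueness (Proposition \ref{pro:PF}) to conclude they coincide.
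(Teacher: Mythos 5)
Your proof is correct and follows exactly the route the paper intends: the paper states this as a ``simple corollary'' of Theorem \ref{thm:clzz} without spelling out details, the symmetry of \C \ref{cond:DUT} and \C \ref{cond:commut} under $(D,U,T)\mapsto(U,D,T')$ being immediate and the invariance of $\nu$ following by marginalising the invariant HZMC to a single line. Your fleshing-out is sound, including the points the paper leaves implicit --- positivity of $D$ and $U$ from the positive rate hypothesis, and the identification $\nu D=\nu U=\nu$ via stationarity and Perron--Frobenius uniqueness.
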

As we have explained at the beginning of the paper, a PCA $A$ with TM $T$ allows one to define a MC $(S_t[T],t\geq 0)$ on the set of global configurations (we write now $S_t[T]$ instead of $S_t$). Under the hypothesis of Corollary \ref{cor:DUTD}, we see that for any finite $n$, the time reversal $(S_{n-t}[T],0\leq t\leq n)$ of the MC $(S_t[T],0\leq t\leq n)$ starting from $S_0[T]\sim \nu$ (as defined in Corollary \ref{cor:DUTD}), is a MC whose kernel is that of the PCA with TM $T'$, whose initial law is also $\nu$. 

Let $\mu$ be a distribution on $E_\kappa^{\mathbb{Z}}$.  The MC $(S_t[T],t\geq 0)$ with initial distribution $\mu$ is reversible if the equality $(S_0[T],S_1[T])\eqd (S_1[T],S_0[T])$ holds. It is said to be quasi-reversible (see e.g. Vasilyev \cite{vas}) if the two following properties hold:\\
(a) $S_1[T]\sim \mu$,\\
(b) there exists a certain PCA $A'$ with TM $T'$ for which the distribution ${\cal L}(S_0[T]~|~S_1[T])$ (time reversal) coincides with ${\cal L}(S_1[T']~|~S_0[T'])$ (usual time). \par

Clearly, reversibility implies quasi-reversibility. Moreover, the present notion of reversibility is the same as the usual one for MC. Quasi-reversibility implies that the time reversal of the MC $(S_t[T],0\leq t\leq n)$ (for some finite $n$) where $S_0[T]\sim \mu$, is a MC whose kernel is that of a PCA $A'$ with some TM $T'$. It is then more restrictive that the only fact that the time-reversal of $S_t[T]$ is a MC. 

Theorem 3.1 in  Vasilyev  \cite{vas}  holds for PCA built on more general graphs and neighborhoods. He states that quasi-reversibility for the MC  $(S_t[T],t\geq 0)$ with initial distribution $\mu$ is equivalent to the fact that the distribution of $(S_0[T],S_1[T])$ is a Gibbs measure on a graph $\bar{\bar{\Gamma}}$ built using two copies of the graph $\Gamma$ on which is defined the PCA. For PCA build on $\mathbb{Z}$, the corresponding graph $\bar{\bar{\Gamma}}$ is simply HZ. 
Vasilyev \cite[Cor. 3.2 and Cor.3.7]{vas} characterizes the positive rate TM $T$ that induces a reversible MC (under some invariant distribution), and those quasi-reversible for general $\bar{\bar{\Gamma}}$. For the cases studied in the present paper (line case, neighborhood of size 2), a change of variables allows one to check that these cases correspond exactly to the set of $T$ which satisfy the two conditions of Theorem \ref{thm:clzz}. 

Hence, by Corollary \ref{cor:DUTD}, we can deduce that when $(D,U,T)$ satisfies Theorem \ref{thm:clzz}, then the MC $(S_t[T],t\geq 0)$ with initial distribution $\nu$ is quasi-reversible. By Corollary 3.7 in \cite{vas}, one sees that every quasi-reversible MC $(S_t[T],t\geq 0)$ with initial distribution $\mu$ satisfies the hypothesis of Theorem \ref{thm:clzz} for some $(D,U,T)$. Hence, the invariant distribution $\nu$ is the law of a MC on the line (deduction already made in Vasilyev  \cite{vas}). In fact, Vasilyev  \cite{vas} expresses his results in terms of Gibbs measures on HZ instead of HZMC, but with some changes of variables, one can pass from the first one to the other (see also  Georgii \cite[Theo. 3.5]{geo11} for more details on the correspondence between Gibbs measure on a line and MC).
 
The reversible cases treated in Corollary 3.2 in \cite{vas} correspond to the cases where $(D,U,T)$ satisfies Theorem \ref{thm:clzz}, and $D=U$. From what is said above, Theorem \ref{theo:HZcolors} applies then to the quasi-reversible case exactly.

\subsubsection{Relaxation of the positive rate condition}
\label{sec:rprc}

We will not consider all PCA that admit some Markov invariant distribution on $\HZ$ here, but only those for which the invariant distribution is the law of a HZMC of MK $(D,U)$ satisfying, for any $a,b \in E_\kappa$, $(DU)_{a,b}>0$ (this is one of the hypothesis of Theorem \ref{thm:clzz}). 
We will assume that for $i=0$, for any $a,b,c$,  $\TT{a}bi>0$ and $\TT{i}ic > 0 $ (one can always relabel the elements of $E_\kappa$ if the condition holds for a $i\neq 0$ instead).\par~\\
\begin{cond}\label{cond:tauxg}
$\textrm{for any }a,b,c,\in E_\kappa,\ \TT{a}b0>0,\ \TT{0}0c > 0.$
\end{cond}\par~\\
Under \C \ref{cond:tauxg}, \C \ref{cond:gibbs-1} is well defined.
For any pair $(a,b)$, since $\sum_c \TT{a}bc=1$, there is a $c$ such that $\TT{a}{b}c>0$; hence \C \ref{cond:gibbs-1} implies that $\TT{a}{b}0>0$ for any $a,b$. It follows that $U_{c,b}^\gamma$ and  $D_{a,c}^\gamma$ as defined in \eref{eq:iqb} are still well defined and $ (D^\gamma U^\gamma)_{a,b}>0$ for all $a,b$.
 We have the following result 
\begin{theo}\label{theo:extension}
Theorem \ref{theo:HZcolors} holds if instead of considering $A:=(\Z, E_\kappa,N,T)$ in $\PCA(\mathbb{Z},E_\kappa)^\star$, $T$ satisfies  \C \ref{cond:tauxg} instead, with the slight modification that $U_{c,b}^\gamma=0$ when 
$\l\{\TT{a}bc, a \in E_\kappa\r\}=\{0\}$.
\end{theo}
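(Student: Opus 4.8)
The plan is to rerun the argument establishing Theorem~\ref{theo:HZcolors}, replacing the positive rate hypothesis by \C \ref{cond:tauxg} throughout, and to verify that the only new phenomenon --- the possibility that $T$ vanishes on the ``interior'' (an output $c\neq 0$ produced from a pair containing a nonzero symbol) --- merely forces matching zeros into $D^\gamma$ and $U^\gamma$, exactly as recorded by the stated modification. The backbone is unchanged: one combines Theorem~\ref{thm:clzz}, whose hypothesis is only that $(DU)_{a,b}>0$ for all $a,b$ (and \emph{not} full positive rate), with Lemma~\ref{lem:C1C2}, which identifies \C \ref{cond:gibbs-1} with the existence of a pair $(D,U)$ realising \C \ref{cond:DUT}.

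First I would check that every auxiliary object remains well defined with the required positivity. The stochastic matrix $Y=[\,\TT{i}{i}{j}\,]$ has a strictly positive $0$-th column ($\TT{i}{i}{0}>0$) and a strictly positive $0$-th row ($\TT{0}{0}{j}>0$), so every state communicates with $0$, and $Y_{0,0}=\TT{0}{0}{0}>0$ gives aperiodicity; hence $Y$ is primitive and its stochastic Perron-LE satisfies $\nu_a>0$ for all $a$ by Proposition~\ref{pro:PF}. Consequently every entry $X_{d,a}=\TT{a}{a}{0}\nu_a/\TT{a}{d}{0}$ is strictly positive, so $X$ is again a positive matrix, with Perron eigenvalue $\lambda>0$ and strictly positive $\gamma$ and $\mu$. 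The denominators in \eref{eq:iqb} are then positive: that of $D^\gamma$ because every $\TT{a}{b'}{0}>0$, and that of $U^\gamma$ because its $b'=0$ term $\frac{\gamma_0}{\TT{0}{0}{0}}\TT{0}{0}{c}$ survives by \C \ref{cond:tauxg}. Thus $D^\gamma,U^\gamma$ are genuine stochastic kernels, and $(D^\gamma U^\gamma)_{a,b}>0$ follows from the single term $c=0$, for which $D^\gamma_{a,0}=1/\sum_{b'}(\gamma_{b'}/\TT{a}{b'}{0})>0$ and $U^\gamma_{0,b}=\gamma_b>0$. This last positivity is precisely what lets Theorem~\ref{thm:clzz} be applied with $M=D^\gamma U^\gamma$.

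The crux is the zero bookkeeping inside Lemma~\ref{lem:C1C2}. Under \C \ref{cond:tauxg} the factors $\TT{0}{0}{0},\TT{a}{b}{0},\TT{0}{0}{c}$ are positive, so \C \ref{cond:gibbs-1} reads: $\TT{a}{b}{c}=0$ iff $\TT{a}{0}{c}=0$ or $\TT{0}{b}{c}=0$. I would then match this against the kernels read off from \eref{eq:iqb}. One has $D^\gamma_{a,c}=0$ iff $\TT{a}{\ell}{c}=0$ for every $\ell$, which by \C \ref{cond:gibbs-1} together with $\TT{0}{0}{c}>0$ happens iff $\TT{a}{0}{c}=0$; and $U^\gamma_{c,b}=0$ iff its numerator $\frac{\gamma_b}{\TT{0}{b}{0}}\TT{0}{b}{c}$ vanishes, i.e.\ iff $\TT{0}{b}{c}=0$, which under \C \ref{cond:gibbs-1} is exactly the condition $\{\TT{a}{b}{c},a\in E_\kappa\}=\{0\}$ of the stated modification (so the modification merely records the value already produced by \eref{eq:iqb}). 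Hence $D^\gamma_{a,c}U^\gamma_{c,b}=0$ iff $\TT{a}{b}{c}=0$, which is the vanishing half of \C \ref{cond:DUT}; the positive half $\TT{a}{b}{c}=D^\gamma_{a,c}U^\gamma_{c,b}/(D^\gamma U^\gamma)_{a,b}$ is the very algebraic identity verified in the positive rate proof, valid wherever $\TT{a}{b}{c}>0$ since exactly the factors entering it are positive there. This establishes \C \ref{cond:DUT} for $(D^\gamma,U^\gamma)$, and Lemma~\ref{lem:C1C2} supplies the converse implication ``\C \ref{cond:DUT} for some $(D,U)$ $\Rightarrow$ \C \ref{cond:gibbs-1}''.

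It then remains to invoke Theorem~\ref{thm:clzz}: since $(D^\gamma U^\gamma)_{a,b}>0$ and \C \ref{cond:DUT} holds, the $(D^\gamma,U^\gamma)$-HZMC is invariant iff in addition $D^\gamma U^\gamma=U^\gamma D^\gamma$, i.e.\ \C \ref{cond:UDDU}; conversely any invariant HZMC with $(DU)_{a,b}>0$ forces \C \ref{cond:DUT} and hence \C \ref{cond:gibbs-1}. The remaining assertions of Theorem~\ref{theo:HZcolors} (the explicit form $\rho=[\,\gamma_i\mu_i\,]$, and the disappearance of \C \ref{cond:UDDU} when $\kappa=1$) involve the same objects and carry over unchanged. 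I expect the main obstacle to be the uniqueness step underlying ``it suffices to test commutation on $(D^\gamma,U^\gamma)$'': one must check that the interior zeros of $T$ create no extra gauge freedom in the factorisation $\TT{a}{b}{c}=D_{a,c}U_{c,b}/(DU)_{a,b}$. Here \C \ref{cond:tauxg} is what saves the argument, since it guarantees that no intermediate symbol $c$ degenerates --- column $c$ of $D^\gamma$ is never identically zero (as $D^\gamma_{0,c}>0$, because $\TT{0}{0}{c}>0$) and row $c$ of $U^\gamma$ is never identically zero (as $U^\gamma_{c,0}>0$) --- so the stochastic normalisation still pins the pair down and any competing factorisation must coincide with $(D^\gamma,U^\gamma)$ on its support.
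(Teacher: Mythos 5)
Your proposal is correct and follows essentially the same route as the paper's (very terse) proof: rerun the Theorem \ref{theo:HZcolors} machinery with \C \ref{cond:tauxg} replacing positive rates, the decisive saving facts being exactly those the paper invokes — namely that $\TT{0}{0}{c}>0$ forces $D_{0,c}>0$ for any solution of \C \ref{cond:DUT}, which rescues the division recovering $U$ in Proposition \ref{pro:zea} (i.e.\ \eref{eq:Ucb} taken at $a=0$), and that the zero pattern of \eref{eq:iqb} then automatically yields $U^\gamma_{c,b}=0$ precisely when $\l\{\TT{a}{b}{c}, a \in E_\kappa\r\}=\{0\}$, so the stated modification merely records what the formula already produces; your positivity checks ($Y$ primitive, hence $\nu$, $\gamma$, $\mu$ positive and $(D^\gamma U^\gamma)_{a,b}>0$ via the $c=0$ term) are detail the paper leaves implicit. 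One caution on your final sentence: it is not the ``stochastic normalisation'' that pins the pair down — solutions of \C \ref{cond:DUT} still form the full $\eta$-indexed family $(D^\eta,U^\eta)$ with $\eta=(U_{0,b})_b$, and it is the commutation/Perron step (which survives under \C \ref{cond:tauxg} because the diagonal identity only divides by column-$0$ and row-$(0,0)$ entries of $T$) that forces $\eta=\gamma$ — but since this is the mechanism you in fact rely on when you say ``it suffices to test commutation on $(D^\gamma,U^\gamma)$'', the argument stands.
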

The proof is postponed to the end of Section \ref{sec:PTHZ}. It is similar to that of Theorem \ref{theo:HZcolors}.  
 
\subsection{Markovianity of an invariant distribution on $\HZ(n)$: complete solution}  
\label{sec:MHZn}
 
In the cyclic zigzag, we have
\begin{theo} \label{thm:cczz}  
Let $A:=(\cyl{n}, E_\kappa,N,T)$ be a PCA seen as acting on ${\cal M}(E_\kappa^{\HZ(n)})$ for some $n\geq 1$ and $(D,U)$ a pair of MK such that for any $0\leq a,b \leq \kappa$, $(DU)_{a,b}>0$. The law of a $(D,U)$-HZCMC on $\HZ(n)$ is invariant by $A$   iff 
\C \ref{cond:DUT} holds and \\~\\
\begin{cond}\label{cond:commut-cyl} 
$\displaystyle
  {\sf Diagonal}((DU)^k)={\sf Diagonal}((UD)^k) \textrm{~for all } 1\leq k\leq \min(\kappa+1,n).
$
\end{cond}
\end{theo}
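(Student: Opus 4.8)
The plan is to mirror the proof of Theorem \ref{thm:clzz}, replacing the line by the cycle and tracking the single extra global constraint that periodicity introduces. Write $P=DU$ and $Q=UD$; these are two stochastic matrices on $E_\kappa$ with the same non-zero spectrum, so $\Trace(P^n)=\Trace(Q^n)=Z_n$ for every $n$. The first observation is that marginalising the $(D,U)$-HZCMC on $\HZ(n)$ over its top (resp. bottom) line yields, by the cyclic summation $\sum_a U_{b,a}D_{a,b'}=(UD)_{b,b'}$ (resp. $\sum_b D_{a,b}U_{b,a'}=(DU)_{a,a'}$), a cyclic Markov chain with kernel $Q=UD$ (resp. $P=DU$) and common normalisation $Z_n$. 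The hypothesis $(DU)_{a,b}>0$ guarantees that all denominators below are well defined and that the supports of the two descriptions match.

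For necessity I would argue as in the line case. Invariance forces the local conditional law $\P(S(i,t+1)=c\mid S(i,t)=a,S(i+1,t)=b)$ of the HZCMC, which equals $D_{a,c}U_{c,b}/(DU)_{a,b}$ on the support, to coincide with $\TT abc$, giving \C \ref{cond:DUT}. Moreover the top line of $\HZ_t(n)$ is the bottom line of $\HZ_{t+1}(n)$, so stationarity in time imposes that its two descriptions agree, i.e. the $Q$-CMC and the $P$-CMC are the same measure on $E_\kappa^{\cyl n}$. For sufficiency I would run the cyclic analogue of the computation in the proof of Theorem \ref{thm:clzz}: starting from the $(D,U)$-HZCMC on $\HZ_0(n)$ and applying $A$, substitute \C \ref{cond:DUT} for each factor $\TT{b_i}{b_{i+1}}{c_i}$ and sum over the bottom configuration $(a_i)_{i\in\cyl n}$. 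Because the indices close up cyclically, this sum produces $\prod_i (UD)_{b_i,b_{i+1\mo n}}$ while the denominators contribute $\prod_i (DU)_{b_i,b_{i+1\mo n}}$. Hence the pushforward equals the target HZCMC on $\HZ_1(n)$ if and only if, for every cyclic word charged by the measure,
\[\prod_{i=0}^{n-1}(DU)_{b_i,b_{i+1\mo n}}=\prod_{i=0}^{n-1}(UD)_{b_i,b_{i+1\mo n}};\]
that is, if and only if $P$ and $Q$ induce the same CMC on $\cyl n$. Thus both directions collapse onto a single equivalence: $\mathrm{CMC}(DU)=\mathrm{CMC}(UD)$ on $\cyl n$ is equivalent to \C \ref{cond:commut-cyl}.

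This equivalence is the hard part, and it is exactly where the product structure $P=DU$, $Q=UD$ (not merely cospectrality) must be exploited. One direction is immediate: taking the constant word gives $\Diag(P)=\Diag(Q)$, and summing the cyclic-product identity over all closed walks returning to a fixed state in $k$ steps yields $(P^k)_{jj}=(Q^k)_{jj}$ for $k\le n$. For the converse I would first invoke Cayley--Hamilton: since $P$ and $Q$ are similar ($UD=D^{-1}(DU)D$ when $D$ is invertible, and by a density/limit argument in general) they share a characteristic polynomial of degree $\kappa+1$, so each sequence $k\mapsto(P^k)_{jj}$ and $k\mapsto(Q^k)_{jj}$ obeys the same order-$(\kappa+1)$ linear recurrence; therefore equality of diagonals for $1\le k\le\min(\kappa+1,n)$ propagates to all larger $k$, while when $n<\kappa+1$ only cycles of length $\le n$ occur in a length-$n$ word. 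This is precisely what produces the bound $\min(\kappa+1,n)$.

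The genuinely delicate and last step, which I expect to be the main obstacle, is to upgrade equality of all the relevant diagonal powers into the \emph{pointwise} cyclic-product identity above. For arbitrary cospectral matrices the diagonal equalities $\Diag(P^k)=\Diag(Q^k)$ are strictly weaker than equality of the CMC (they only constrain sums of walk weights, not individual cycle products), so the argument must genuinely use that $P=DU$ and $Q=UD$ are built from the \emph{same} pair $(D,U)$. I would pass through the resolvent identity $(\Id-zDU)^{-1}=\Id+zD(\Id-zUD)^{-1}U$, which links the diagonal resolvents of $P$ and $Q$; combined with Cramer's rule (equal diagonal resolvents and equal $\det(\Id-zP)=\det(\Id-zQ)$ force equality of all principal cofactor polynomials) and the classical fact that an irreducible non-negative matrix is determined up to diagonal similarity by its cycle products, this should yield $UD=G(DU)G^{-1}$ for a positive diagonal $G$, whence the cyclic-product identity follows by telescoping the factors of $G$. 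Making this chain rigorous, and in particular showing that the $(D,U)$-structure is exactly what renders the finitely many diagonal conditions sufficient, is where the real work lies.
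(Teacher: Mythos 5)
Your overall architecture matches the paper's proof of Theorem \ref{thm:cczz}: the marginals of the HZCMC on the two lines are the $DU$- and $UD$-CMCs, \C \ref{cond:DUT} comes from the local conditional law exactly as you write, and both directions reduce to the statement that the two CMCs on $\cyl{n}$ coincide, i.e.\ to the pointwise cyclic-product identity $\prod_{i}(DU)_{w_i,w_{i+1\mo n}}=\prod_{i}(UD)_{w_i,w_{i+1\mo n}}$. (Writing $P=DU$, $Q=UD$: in the necessity direction your phrase ``summing over all closed walks returning to a fixed state in $k$ steps'' only yields $(P^k)_{aa}(P^{n-k})_{aa}=(Q^k)_{aa}(Q^{n-k})_{aa}$; you must first freeze the remaining $n-k$ letters at $a$ and use $P_{aa}=Q_{aa}>0$, or use repeated patterns $w^j$ as the paper does --- loose but fixable.) The genuine gap is in the sufficiency direction, exactly where you locate it: you never prove that the finitely many diagonal equalities imply the pointwise cyclic-product identity, and the chain you sketch does not close. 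Equal diagonal resolvents together with equal characteristic polynomials give, via Cramer, only $\det(\Id-zP^{(a)})=\det(\Id-zQ^{(a)})$ for each $a$, i.e.\ equality of \emph{sums} of principal minors (weighted disjoint-cycle covers avoiding $a$), which is far weaker than equality of individual cycle products; and the ``classical fact'' you then invoke (cycle products determine an irreducible nonnegative matrix up to diagonal similarity) is circular here, since equality of the cycle products of $DU$ and $UD$ is precisely the conclusion you are after, not something you have established. So the step ``equal principal cofactor polynomials $\Rightarrow$ $UD=G(DU)G^{-1}$'' is unsubstantiated, and with it the whole converse.

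The paper's mechanism is different and avoids this rigidity question in the inductive step. It first restates \C \ref{cond:commut-cyl} in pointwise form: for all $j\leq \kappa+1$ and all $a_0,\dots,a_{j-1}$, $\prod_{i=0}^{j-1}(DU)_{a_i,a_{i+1\mo j}}=\prod_{i=0}^{j-1}(UD)_{a_i,a_{i+1\mo j}}$ (the remark directly after the theorem). Then, given a cyclic word $W$ of length $n>\kappa+1$, the pigeonhole principle produces a letter recurring within distance at most $\kappa+1$; the enclosed closed loop has equal $DU$- and $UD$-weight by the short pointwise identity, so it can be excised, and induction on $n$ concludes. This loop-excision induction is the key device absent from your proposal; it also explains the bound $\min(\kappa+1,n)$ combinatorially, with no need for your Cayley--Hamilton propagation. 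You are right, incidentally, that passing from the diagonal to the pointwise form of the condition is delicate and must use the pair structure $P=DU$, $Q=UD$ (for instance $Q={}^tP$ has the same diagonal powers as $P$ but reversed cycle products): the paper asserts this equivalence in its remark without proof, while in its necessity direction the pointwise short identities are extracted directly from the invariance of the measure rather than from the diagonal condition. So your instinct about where the difficulty sits is sound, but your proof, as you yourself concede, leaves that step open, whereas the paper resolves the extension to length $n$ by the pigeonhole argument.
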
~\\

Notice that \C \ref{cond:commut-cyl} is equivalent to the fact that for all $j \leq |E_\kappa|$, for all $a_0,\dots,a_{j-1} \in E_\kappa$,  
\[\prod_{i=0}^{j-1}(DU)_{a_i,a_{i+1 \mo j}} = \prod_{i=0}^{j-1} (UD)_{a_i,a_{i+1\mo j}}.\]
It does not imply $DU=UD$ (but the converse holds). 
\begin{proof} Suppose that the law of the  $(D,U)$-HZCMC on $\HZ(n)$ is invariant by $T$. The reason why \C \ref{cond:DUT} holds is almost the same as in Section \ref{sec:MHZZ}:
\[\P(S(0,1)=c | S(0,0)=a,S(1,0)=b)=\frac{D_{a,c}U_{c,b}((DU)^{n-1})_{b,a}}{(DU)_{a,b}((DU)^{n-1})_{b,a}}= \frac{D_{a,c}U_{c,b}}{(DU)_{a,b}}.\] 
If $S$ is a $(D,U)$-HZCMC on $\HZ(n)$ then $S|_{\H_0(n)}$ and $S|_{\H_1(n)}$ are respectively $DU$ and $UD$ CMC on $\cyl{n}$. Moreover the laws of $S|_{\H_0(n)}$ and $S|_{\H_1(n)}$ must be equal since they are respectively first line of $\HZ_0$ and $\HZ_1$. 
Now take a pattern $w=(w_1,\dots,w_\ell)$ in $E_\kappa^\ell$, for some $\ell\leq |E_\kappa|$, and consider the word $W$ obtained by $j$ concatenations of $w$. The probability that $S|_{\H_0(j\ell)}$ and $S|_{\H_1(j\ell)}$ take value $W$,  are both equal to
 \[\frac{\l(\prod_{i=0}^{\ell-1} \IT{(D U)}{w_i}{w_{i+1 \mo \ell}}\r)^j}{\Trace((UD)^{\ell j})} = \frac{\l(\prod_{i=0}^{\ell-1} \IT{(U D)}{w_i}{w_{i+1\mo \ell}}\r)^j}{\Trace((UD)^{\ell j})},\]
where the denominators are equal. Therefore, we deduce \C \ref{cond:commut-cyl}.\par
Assume that \C \ref{cond:DUT} and \C \ref{cond:commut-cyl}  hold true for $D$ and $U$ some MK. Assume that $S$ is a  $(D,U)$-HZCMC on $\HZ_0$. Again $S|_{\H_0(n)}$ and $S|_{\H_1(n)}$ are respectively $DU$ and $UD$-CMC on $\cyl{n}$. By \C~\ref{cond:DUT} one sees that $S|_{\H_1(n)}$ is obtained from $S|_{\H_0(n)}$ by the PCA $A$. Let us see why \C \ref{cond:commut-cyl} implies that $S|_{\H_0(n)}$ and $S|_{\H_1(n)}$ have the same law: we have to prove that any word $W=(w_0,\dots,w_{n-1})$ occurs equally likely for $S|_{\H_0(n)}$ or $S|_{\H_1(n)}$, when \C \ref{cond:commut-cyl} says that it is the case only when $n\leq |E_\kappa|$. We will establish that  
\[\prod_{i=0}^{n-1} (UD)_{w_i,w_{i+1\mo n}}= \prod_{i=0}^{n-1} (DU)_{w_i,w_{i+1\mo n}}.\]
For any letter $a \in E_\kappa$ which occurs at successive positions $j_1^a,\dots,j_{k_a}^a$ for some $k_a$ in $W$ let $d_n^a(j_i^a,j_{i+1}^a)$ be the distance between these indices in $\cyl{n}$ that is $\min(j_{i+1}^a-j_i^a,n-j_{i+1}^a+j_i^a)$. Since $|E_\kappa|<+\infty$ is bounded, there exists $a$ and indices $j_i^a$ and $j_{i+1}^a$ for which $d_n(j_i^a,j_{i+1}^a)\leq |E_{\kappa}|$ (by the so called pigeonhole principle); to show that $W$ occurs equally likely in $S|_{\H_1(n)}$ and in $S|_{\H_0(n)}$ it suffices to establish that $W'$ obtained by removing the cyclic-pattern $W'=w_{j_i^a+1},\dots,w_{j_{i+1}^a}$ from $W$ occurs equally likely in  $S|_{\H_1(n-(j_{i+1}^a-j_{i}^a))}$ and $S|_{\H_0(n)-(j_{i+1}^a-j_{i}^a)}$ (since the contribution to the weight of the cyclic-pattern $W'$ 
is $\prod_{\ell=j_i^a}^{j_{i+1}^a-1} (DU)_{w_\ell',w_{\ell+1}'}=\prod_{\ell=j_i^a}^{j_{i+1}^a-1} (UD)_{w_\ell',w_{\ell+1}'}$ in both $S|_{\H_1(n)}$ and $S|_{\H_0(n)}$). This ends the proof by induction.
\end{proof}
 
Recall the definitions of $U^\eta, D^\eta, \gamma$ given before Theorem \ref{theo:HZcolors}.
\begin{theo}\label{theo:HZcolors-2}  
 Let $A:=(\cyl{n}, E_\kappa,N,T)$ be a positive rate PCA seen as acting on ${\cal M}(E_\kappa^{\HZ(n)})$. $A$ admits the law of a HZCMC as invariant distribution on $\HZ(n)$ iff \C \ref{cond:gibbs-1} holds and if\\~\\
\begin{cond}\label{cond:commut-cyl-2}
$ \displaystyle
    {\sf Diagonal}((D^\gamma U^\gamma)^k)={\sf Diagonal}((U^\gamma D^\gamma)^k) \textrm{~for all } 1\leq k\leq \kappa+1.
$
\end{cond}  ~\\~\\
In this case the $(D^\gamma,U^\gamma)$-HZCMC is invariant under $A$. 
When $\kappa=1$ (the two-colour case), when  \C \ref{cond:gibbs-1} holds then so does \C \ref{cond:commut-cyl-2},
 and then the only condition is \C \ref{cond:gibbs-1}.
\end{theo}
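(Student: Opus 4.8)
The plan is to run the exact reduction used for Theorem \ref{theo:HZcolors}, but with Theorem \ref{thm:cczz} in place of Theorem \ref{thm:clzz}. By definition, $A$ admits a HZCMC as invariant distribution iff there is a pair of MK $(D,U)$ with $(DU)_{a,b}>0$ for all $a,b$ whose $(D,U)$-HZCMC is invariant; by Theorem \ref{thm:cczz} this holds iff $(D,U)$ satisfies \C \ref{cond:DUT} and \C \ref{cond:commut-cyl}. First I would invoke Lemma \ref{lem:C1C2} as in the proof of Theorem \ref{theo:HZcolors}: for positive rate $T$, a pair $(D,U)$ satisfying \C \ref{cond:DUT} exists iff \C \ref{cond:gibbs-1} holds, and then the admissible pair is unique and equals $(D^\gamma,U^\gamma)$ of \eref{eq:iqb}. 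Hence the existence of an invariant HZCMC is equivalent to ``\C \ref{cond:gibbs-1} holds and the canonical pair $(D^\gamma,U^\gamma)$ satisfies \C \ref{cond:commut-cyl}''; the invariant law is then the $(D^\gamma,U^\gamma)$-HZCMC, as claimed. This already settles the $(\Leftarrow)$ direction of the theorem: since $\min(\kappa+1,n)\le\kappa+1$, \C \ref{cond:commut-cyl-2} implies \C \ref{cond:commut-cyl} for $(D^\gamma,U^\gamma)$, and Theorem \ref{thm:cczz} returns the invariance.

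The substance is the $(\Rightarrow)$ direction, namely upgrading \C \ref{cond:commut-cyl} (equality of the diagonals of $(D^\gamma U^\gamma)^k$ and $(U^\gamma D^\gamma)^k$ for $k\le\min(\kappa+1,n)$) to \C \ref{cond:commut-cyl-2} (the same equality for $k\le\kappa+1$). Here I would use the standard fact that, for square matrices of equal size, $M:=D^\gamma U^\gamma$ and $M':=U^\gamma D^\gamma$ are cospectral, i.e.\ share the same characteristic polynomial. Consequently $\Trace(M^k)=\Trace(M'^k)$ for every $k\ge 0$, and by Cayley--Hamilton the scalar sequences $k\mapsto (M^k)_{a,a}$ and $k\mapsto (M'^k)_{a,a}$ satisfy one common linear recurrence of order $\kappa+1$. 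Their difference $g_a(k):=(M^k)_{a,a}-(M'^k)_{a,a}$ then obeys this recurrence, vanishes at $k=0$ (both matrices have diagonal $1$ at power $0$), and by hypothesis vanishes for $1\le k\le\min(\kappa+1,n)$. When $n\ge\kappa$ this produces $\kappa+1$ consecutive zeros $g_a(0),\dots,g_a(\kappa)$, which the recurrence propagates to all $k\ge 0$, giving \C \ref{cond:commut-cyl-2}.

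The delicate regime, and the main obstacle, is $n<\kappa$, where one has only $n+1<\kappa+1$ starting zeros, so the recurrence alone is insufficient to reach $k=\kappa+1$. To close this case one must exploit more of the structure of $(D^\gamma,U^\gamma)$ than mere cospectrality: on the one hand the trace identities $\sum_a g_a(k)=0$ valid for all $k$, and on the other the finer equivalence, noted after Theorem \ref{thm:cczz}, between the summed (diagonal) form of \C \ref{cond:commut-cyl} and its term-by-term cyclic-product form $\prod_i (DU)_{a_i,a_{i+1\mo j}}=\prod_i (UD)_{a_i,a_{i+1\mo j}}$. The point to establish is that, for the cospectral stochastic pair $(D^\gamma U^\gamma,U^\gamma D^\gamma)$ coming from \C \ref{cond:DUT}, agreement of all cyclic products of length $\le\min(\kappa+1,n)$ already forces agreement of those of length $\le\kappa+1$, using the peeling-of-short-patterns argument (pigeonhole) of the proof of Theorem \ref{thm:cczz}. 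Making this reduction rigorous for short $n$ is where the genuine work lies.

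Finally, the two-colour case is immediate and requires none of the above: for $\kappa=1$ the proof of Theorem \ref{theo:HZcolors} shows that \C \ref{cond:gibbs-1} forces \C \ref{cond:UDDU}, i.e.\ $D^\gamma U^\gamma=U^\gamma D^\gamma$, and commuting kernels trivially have equal diagonals of all powers, so \C \ref{cond:commut-cyl-2} holds automatically. Thus for $\kappa=1$ the single condition \C \ref{cond:gibbs-1} characterises the property, as asserted.
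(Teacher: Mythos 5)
Your skeleton is the paper's: reduce via Theorem \ref{thm:cczz}, convert \C \ref{cond:DUT} into \C \ref{cond:gibbs-1} by Lemma \ref{lem:C1C2}, observe that the converse direction is immediate, and settle $\kappa=1$ exactly as the paper does (diagonal equality pins everything down for $2\times 2$ stochastic matrices). But there is a genuine error at the pivot of your forward direction: Lemma \ref{lem:C1C2} does \emph{not} give uniqueness of the pair satisfying \C \ref{cond:DUT}. Proposition \ref{pro:zea} shows that the solution set is the whole family $\{(D^\eta,U^\eta):\eta\in{\cal M}^\star(E_\kappa)\}$, so from invariance you may only conclude $(D,U)=(D^\eta,U^\eta)$ for \emph{some} full-support $\eta$, and your asserted equivalence ``existence of an invariant HZCMC $\Leftrightarrow$ \C \ref{cond:gibbs-1} holds and the canonical pair $(D^\gamma,U^\gamma)$ satisfies \C \ref{cond:commut-cyl}'' does not follow from what you wrote: a priori some $(D^\eta,U^\eta)$ with $\eta\neq\gamma$ could satisfy \C \ref{cond:commut-cyl}. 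The missing step is precisely the content of the paper's Remark \ref{rem:theo:HZcolors-2}: the $k=1$ instance of \C \ref{cond:commut-cyl}, namely ${\sf Diagonal}(D^\eta U^\eta)={\sf Diagonal}(U^\eta D^\eta)$, rewrites via \eref{eq:DU} and \eref{eq:DUdssqfw}--\eref{eq:ofd} as the statement that $\eta$ is a stochastic left Perron eigenvector of the matrix $X$, whence $\eta=\gamma$ by Perron--Frobenius uniqueness. You must run (or cite) that computation; nothing in Lemma \ref{lem:C1C2} substitutes for it. (Symmetrically, in your easy direction you should invoke Proposition \ref{pro:zea} or \eref{eq:id} to know that $(D^\gamma,U^\gamma)$ does satisfy \C \ref{cond:DUT} under \C \ref{cond:gibbs-1}.)

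On the passage from $k\leq\min(\kappa+1,n)$ to $k\leq\kappa+1$: your cospectrality/Cayley--Hamilton propagation is correct and is in fact more explicit than anything in the paper, but note it only buys the single new case $n=\kappa$, since for $n\geq\kappa+1$ the two conditions coincide verbatim; and for $n<\kappa$ you explicitly give up, so your proof is incomplete there. The paper disposes of this regime through the (unproved ``Notice'') remark after Theorem \ref{thm:cczz}, that \C \ref{cond:commut-cyl} is equivalent to the $n$-independent term-by-term condition $\prod_i(DU)_{a_i,a_{i+1\mo j}}=\prod_i(UD)_{a_i,a_{i+1\mo j}}$ for $j\leq\kappa+1$, which upon summing over closed walks yields \C \ref{cond:commut-cyl-2}. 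A self-contained fix along the lines you gesture at: invariance identifies the laws of the $DU$- and $UD$-CMC on $\cyl{n}$, and since $\Trace((DU)^n)=\Trace((UD)^n)$ (cospectrality again) this gives term-by-term equality of cyclic products for all words of length exactly $n$; setting $F_{u,v}=\log(DU)_{u,v}-\log(UD)_{u,v}$ (entries are positive in the positive rate case), the words $a^n$, $a^{n-1}b$ and $a^{n-2}bc$ give $F_{a,a}=0$, $F_{a,b}+F_{b,a}=0$ and $F_{a,b}+F_{b,c}+F_{c,a}=0$ whenever $n\geq 3$, and loops, $2$-cycles and triangles generate every closed-walk sum (fix $x$ and write $F_{u,v}=F_{x,v}-F_{x,u}$, then telescope), so ${\sf Diagonal}((DU)^k)={\sf Diagonal}((UD)^k)$ for \emph{every} $k$. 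That closes $3\leq n<\kappa$; the cases $n\in\{1,2\}$ remain genuinely delicate, and to be fair the paper's one-line proof (Remark \ref{rem:theo:HZcolors-2}) is silent about them too, so your flagging of the short-$n$ regime identifies a real soft spot --- but as submitted, your argument proves the theorem only for $n\geq\kappa$ and is missing the $\eta=\gamma$ pinning throughout.
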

Again, one can state a version of this Theorem without the positive rate condition with \C \ref{cond:tauxg} instead (the analogous of Theorem \ref{theo:extension} in the cyclic case). The proof in this case is the same as that of Theorem \ref{theo:extension}.

\subsection{Markov invariant distribution on the line}
In this section, we discuss some necessary and sufficient conditions on $(M,T)$ for the law of the $M$-MC to be invariant under $T$ on $\H$ and $\H(n)$.

\subsubsection{Markovian invariant distribution on  $\H$ or $\H(n)$}
\label{sec:MID}
Let $T$ be a TM for a PCA $A$ in $\PCA(\LL,E_\kappa)$. 
Let $M$ be a MK on $E_\kappa$,  and $\rho = \bma \rho_i  \ema_{0 \leq i \leq \kappa}$an element of ${\cal M}^\star(E_\kappa)$.
Consider the matrices $(Q_x^M, x \in E_\kappa)$ defined by
\[Q_x^M=\bma \dis\frac{\sqrt{\rho_i}}{\sqrt{\rho_j}} M_{i,j}\TT{i}jx \ema_{0\leq i,j\leq \kappa},
\]
and set  $\Sr:=\bma \sqrt{\rho_i} \ema_{0\leq i\leq \kappa}$ (we should write $Q_x^M(\rho,T)$ instead, but $\rho$ and $T$ will be implicit). 

\begin{lem} \label{lem:ecr-mat} Let  $T$ be a TM for a PCA $A$ in $\PCA(\LL,E_\kappa)$ (with positive rate or not).
  \begin{enumerate}[(i)]
  \item The law of the  $(\rho,M)$-MC is invariant by $T$ on $\H$ iff for any $m>0$, any $x_1,\dots,x_m \in E_\kappa$, 
    \ben\label{eq:qgdsf}
\rho_{x_1}\prod_{j=1}^{m-1} M_{x_j,x_{j+1}} = \Sr \l(\prod_{j=1}^m Q^M_{x_j}\r) \tran{\Sr}.
\een
  \item The law of the  $M$-CMC is invariant by $T$ on $\H(n)$ iff for any $x_1,\dots,x_n \in E_\kappa$, 
    \ben \label{eq:phted}
   \prod_{j=1}^n M_{x_j,x_{j+1 \mo n}}= \Trace \l(\prod_{j=1}^n Q^M_{x_j}\r) .
  \een
\end{enumerate}
\end{lem}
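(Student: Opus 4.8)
The plan is to verify both parts of the lemma by directly translating the defining product formulas for a Markov chain into matrix form using the auxiliary matrices $Q_x^M$, exploiting the fact that conjugation by the diagonal matrix $\Diag(\Sr)$ is what turns the one-step update into these symmetrised matrices. Recall that invariance of the law under $T$ means that the image distribution on the next line equals the original one, and since both lines are laws of (cyclic) Markov chains, it suffices to match their finite-dimensional marginals, i.e. the probabilities of all finite words on $\H$ (resp. all words of length $n$ on $\H(n)$).

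First I would compute the image, under $T$, of the probability that the current line carries a fixed pattern. For part $(i)$, I fix $x_1,\dots,x_m\in E_\kappa$ and compute the probability that $S(1,\cdot)$ takes values $x_1,\dots,x_m$ on positions $1,\dots,m$ of $\H_1$. Conditioning on the configuration at time $0$, one sums over the values $a_1,\dots,a_{m+1}$ of the cells at positions $1,\dots,m+1$ of $\H_0$ that produce the pattern through the PCA rule. Because $\P(S(1,j)=x_j\mid S(0,j)=a_j,S(0,j+1)=a_{j+1})=\TT{a_j}{a_{j+1}}{x_j}$ and, under the $(\rho,M)$-MC, the configuration on $\H_0$ has weight $\rho_{a_1}\prod_j M_{a_j,a_{j+1}}$, the image probability is a sum over the $a_j$ of a product of factors of the form $\rho_{a_1}\,M_{a_j,a_{j+1}}\TT{a_j}{a_{j+1}}{x_j}$. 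The key algebraic step is to recognise this telescoping sum as a matrix product: inserting the telescoping factors $\sqrt{\rho_{a_j}}/\sqrt{\rho_{a_j}}$ lets one write each factor as the $(a_j,a_{j+1})$ entry of $Q_{x_j}^M$, and the boundary terms $\rho_{a_1}$ and the free final summation collapse into the vector $\Sr$ on both ends. Summing over all intermediate indices is exactly the matrix multiplication $\Sr\big(\prod_{j=1}^m Q_{x_j}^M\big)\tran{\Sr}$. Setting this equal to the marginal $\rho_{x_1}\prod_{j=1}^{m-1}M_{x_j,x_{j+1}}$ of the $(\rho,M)$-MC on $\H_1$ gives \eref{eq:qgdsf}; invariance holds iff this matches for all patterns, which is precisely the stated equivalence.

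For part $(ii)$ the computation is parallel but on the cycle $\cyl{n}$: here the configuration at time $0$ is a CMC with weight $Z_n^{-1}\prod_i M_{a_i,a_{i+1\mo n}}$, and the boundary no longer produces a left vector $\Sr$ and a right vector $\tran{\Sr}$ but instead closes up cyclically. The same insertion of $\sqrt{\rho}$-factors now conjugates $M_{a_i,a_{i+1}}\TT{a_i}{a_{i+1}}{x_i}$ into the entries of $Q_{x_i}^M$, and the cyclic closure turns the summation over all $a_i\in\cyl{n}$ into the trace $\Trace\big(\prod_{j=1}^n Q_{x_j}^M\big)$; the normalising constants $Z_n$ cancel between the two sides. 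Equating with the target marginal $\prod_{j=1}^n M_{x_j,x_{j+1\mo n}}$ of the $M$-CMC yields \eref{eq:phted}.

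I expect the only genuinely delicate point to be the bookkeeping of the boundary terms in the non-cyclic case: one must check that the extra summation over $a_{m+1}$ (the right boundary cell that has no associated $x$-constraint) together with the initial weight $\rho_{a_1}$ indeed assembles into the clean bilinear form $\Sr(\cdots)\tran{\Sr}$ rather than some unsymmetric expression, and that the $\sqrt{\rho}$ factors telescope exactly so that no leftover ratio survives. Everything else is the routine identification of an indexed sum of products with a matrix product or a trace, which is why the symmetrised matrices $Q_x^M$ were defined with the $\sqrt{\rho_i}/\sqrt{\rho_j}$ prefactor in the first place.
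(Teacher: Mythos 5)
Your proposal is correct and is essentially the paper's own argument: the paper's entire proof reads ``Just expand the right hand side,'' and your computation --- telescoping the $\sqrt{\rho_{a_j}}$ factors so that $\Sr \bigl(\prod_{j=1}^m Q^M_{x_j}\bigr) \tran{\Sr}$ (resp.\ $\Trace\bigl(\prod_{j=1}^n Q^M_{x_j}\bigr)$, where the ratios cancel cyclically and $Z_n$ cancels on both sides) reduces to the one-step image probability $\sum_{a_1,\dots,a_{m+1}}\rho_{a_1}\prod_j M_{a_j,a_{j+1}}\TT{a_j}{a_{j+1}}{x_j}$ of the pattern --- is precisely that expansion, together with the standard fact that equality of all cylinder probabilities characterises invariance. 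Nothing further is needed.
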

\begin{proof}
  Just expand the right hand side.
\end{proof}

In the rest of this section, $(i)$ and $(ii)$ 
will always refer to the corresponding item in Lemma \ref{lem:ecr-mat}.
We were not able to fully describe the set of solutions $(M,T)$ to $(i)$ and $(ii)$.
Nevertheless, in the rest of this section we discuss various necessary and sufficient conditions on $(M,T)$. 
We hope that the following results will shed some light on the algebraic difficulties that arise here.

\begin{pro}\label{pro:PS}[I.I. Piatetski-Shapiro] Lemma \ref{lem:ecr-mat} still holds if in $(i)$ 
 the conditions \eref{eq:qgdsf}  
holds only for all $m\leq \kappa+2$.
\end{pro}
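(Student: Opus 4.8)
The plan is to recast condition \eref{eq:qgdsf} as the equality of two \emph{recognizable word series} of the same shape, and then apply a reachability/vanishing principle for such series, the whole point being to control the common dimension sharply enough that only $m\le\kappa+2$ is needed.

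First I would observe that the left-hand side of \eref{eq:qgdsf} admits a matrix product representation identical in form to the right-hand side. Write $\w M:=\sum_{x}Q^M_x=\bma \frac{\sqrt{\rho_i}}{\sqrt{\rho_j}}M_{i,j}\ema_{0\le i,j\le\kappa}$ for the conjugate $\mathrm{diag}(\Sr)\,M\,\mathrm{diag}(\Sr)^{-1}$ of $M$ (using $\sum_x\TT ijx=1$), and let $\Pi_x$ be the diagonal projector onto coordinate $x$. Setting $P_x:=\Pi_x\w M$, and using $\w M\,\tran\Sr=\tran\Sr$ (since $M$ is stochastic) together with a telescoping of the $\sqrt{\rho}$ factors, one checks by induction that $\rho_{x_1}\prod_{j=1}^{m-1}M_{x_j,x_{j+1}}=\Sr\,P_{x_1}\cdots P_{x_m}\,\tran\Sr$. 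Hence \eref{eq:qgdsf} for $w=x_1\cdots x_m$ reads $h(w)=0$, where $h(w):=\Sr\big(P_{x_1}\cdots P_{x_m}-Q^M_{x_1}\cdots Q^M_{x_m}\big)\tran\Sr$. The crucial structural features are that both families share the boundary vectors $\Sr$ and $\tran\Sr$ and have the same column sum $\sum_xP_x=\sum_xQ^M_x=\w M$; I would also record the identity $\Sr\,\w M=\Sr$ (equivalent to $\rho M=\rho$), which is forced by the low-order instances of \eref{eq:qgdsf}.

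Next, the engine. The series $h$ has the form $w\mapsto\widehat u\,C_{x_1}\cdots C_{x_m}\,\widehat v$ with $C_x=P_x\oplus Q^M_x$ of size $2(\kappa+1)$, $\widehat u=(\Sr,\Sr)$ and $\widehat v=\tran{(\tran\Sr,-\tran\Sr)}$. For any such representation the increasing chain of reachability spaces $\mathcal R_m:=\mathrm{span}\{\widehat u\,C_{x_1}\cdots C_{x_j}:j\le m\}$ stabilises at some index $s$, and $h\equiv0$ as soon as $h$ vanishes on all words of length $\le s$: indeed the hypothesis then gives $\widehat v\perp\mathcal R_s=\mathcal R_\infty$, so $\widehat u\,C_{x_1}\cdots C_{x_m}\in\mathcal R_\infty$ yields $h(w)=0$ for every $w$. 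Since $\dim\mathcal R_0=1$ one has $s\le\dim\mathcal R_\infty-1$, the underlying mechanism being Cayley–Hamilton, which caps the number of linearly independent reachable row vectors.

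The crux — and the step I expect to be the main obstacle — is to show that the \emph{joint} reachability dimension is at most $\kappa+3$ rather than the naive $2(\kappa+1)$, so that $s\le\kappa+2$. The generic bound only yields $m\le 2\kappa+1$, and the saving must come from the shared structure. Writing $b(w):=\Sr\,Q^M_{x_1}\cdots Q^M_{x_m}$ and $\delta(w):=\Sr\,P_{x_1}\cdots P_{x_m}-b(w)$, the $Q$-block alone reaches a space of dimension $\le\kappa+1$; the recursion $\delta(wx)=\delta(w)P_x-b(w)(Q^M_x-P_x)$, the collapse $\sum_x(Q^M_x-P_x)=0$ (giving $\sum_x\delta(wx)=\delta(w)\w M$), the vanishing $\delta(\varepsilon)=0$, and $\Sr\,\w M=\Sr$ should together bound the extra dimensions contributed by $\delta$ over those of $b$ by a constant independent of $\kappa$. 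Granting this, combining $s\le\kappa+2$ with the vanishing principle shows that $h(w)=0$ for all $|w|\le\kappa+2$ forces $h\equiv0$, i.e. \eref{eq:qgdsf} for all $m$; by Lemma \ref{lem:ecr-mat}$(i)$ the law of the $(\rho,M)$-MC is then invariant, which is the assertion of the proposition.
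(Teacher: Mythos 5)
Your reformulation is correct as far as it goes: the identity $\rho_{x_1}\prod_{j=1}^{m-1}M_{x_j,x_{j+1}}=\Sr\,P_{x_1}\cdots P_{x_m}\,\tran{\Sr}$ with $P_x=\Pi_x\w{M}$ does hold (it only uses $\w{M}\,\tran{\Sr}=\tran{\Sr}$, i.e.\ stochasticity of $M$), and the stabilization/vanishing principle for the doubled representation $C_x=P_x\oplus Q^M_x$ is sound. But with ambient dimension $2(\kappa+1)$ this machinery only proves the proposition with a threshold of order $2\kappa+1$ in place of $\kappa+2$, so everything rests on your unproven claim that $\dim\mathcal{R}_\infty\le\kappa+3$ --- and that claim is the genuine gap, not a technical detail. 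The identities you list cannot deliver it. After one step, $\Sr\,P_{x_1}\cdots P_{x_m}$ is a scalar multiple $\gamma_w\, e_{x_m}\w{M}$ of a row of $\w{M}$, so the $P$-block alone can already reach a $(\kappa+1)$-dimensional space; meanwhile the scalar $\gamma_w$ depends on the whole word while $b(w)=\Sr\,Q^M_{x_1}\cdots Q^M_{x_m}$ varies essentially independently of $e_{x_m}\w M$, so the span of the pairs $(\gamma_w e_{x_m}\w{M},\,b(w))$ can genuinely have dimension of order $2(\kappa+1)$. Neither $\delta(\varepsilon)=0$, nor the collapse $\sum_x(Q^M_x-P_x)=0$, nor $\Sr\,\w{M}=\Sr$ caps the span of the differences $\delta(w)$ by a constant independent of $\kappa$. (A secondary issue: $\Sr\,\w{M}=\Sr$, i.e.\ $\rho M=\rho$, is not obviously forced by the low-order instances of \eref{eq:qgdsf}; summing the $m=2$ instances over $x_1$ only yields $\Sr\,\w{M}\,Q^M_{x_2}\,\tran{\Sr}=(\rho M)_{x_2}$, to be compared with $\Sr\,Q^M_{x_2}\,\tran{\Sr}=\rho_{x_2}$, which does not give $\rho M=\rho$ without further argument.)

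The paper gets the sharp bound by eliminating the $P$-block from the problem instead of trying to bound its contribution: the telescoping you hoped would occur inside the reachability space is performed on the \emph{equations} themselves, at the first letter. Fixing $x_1,x_2$, subtract the instance of \eref{eq:qgdsf} for $(x_2,\dots,x_m)$ multiplied by $M_{x_1,x_2}/\rho_{x_2}$ from the instance for $(x_1,\dots,x_m)$ divided by $\rho_{x_1}$; the Markov sides cancel identically, leaving $v\,Q^M_{x_2}\cdots Q^M_{x_m}\,\tran{\Sr}=0$ for the single row vector $v=\frac{\Sr}{\rho_{x_1}}Q^M_{x_1}-\frac{M_{x_1,x_2}}{\rho_{x_2}}\Sr$. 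This is a one-block reachability problem in the $(\kappa+1)$-dimensional ambient space: the increasing chain $L_1(v)\subseteq L_2(v)\subseteq\cdots$ generated by right-multiplication by the $Q^M_y$ stabilizes by step $\kappa+1$, the hypothesis for $m\le\kappa+2$ makes the stabilized space orthogonal to $\tran{\Sr}$, and an induction on $m$ converts the family of differenced identities back into \eref{eq:qgdsf} for all $m$. With that replacement for your dimension-collapse step, your plan becomes the paper's proof; without it, your argument as written only establishes the weaker threshold $m\le 2\kappa+1$.
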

\begin{proof} We borrow the argument from Toom \& al. \cite[Theorem 16.3]{dkt}. First, note that Formula \eref{eq:qgdsf} can be rewritten
\ben\label{eq:crit}
\l(\frac{\rho^{1/2}}{\rho_{x_1}} Q^M_{x_1}-\frac{M_{x_1,x_2}}{\rho_{x_2}}\rho^{1/2}\r)\l(Q^M_{x_2}Q^M_{x_3}...Q^M_{x_m}\r)\,{}^t \rho^{1/2}=0. \een
We want to prove that if \eref{eq:crit}  holds for all $m\leq \kappa +2$ (and all $(x_i)'s$) then it holds also for any $m>\kappa+2$. The argument relies on the dimension of a certain associated vector space. 
Consider ${\cal P}^d$ the set of  monomials $P(Q^M_0,\dots,Q^M_\kappa)$ with degree at most $d$, that is an ordered product with at most $d$ terms (with possible repetitions) of some $Q^M_i$'s.

If \eref{eq:crit} holds for any $m\leq \kappa +2$ and some fixed $x_1,x_2$, then the row vector 
\ben\label{eq:sqdd}
v=\frac{\rho^{1/2}}{\rho_{x_1}} Q^M_{x_1}-\frac{M_{x_1,x_2}}{\rho_{x_2}}\rho^{1/2}
\een has the following property: for any $d\leq \kappa+1$ and  $P \in {\cal P}^d$, we have $v P(Q^M_0,\dots,Q^M_\kappa) \,{}^t \rho^{1/2}=0$.
This property can be rephrased as follows: all the vectors in the set $S=\{v P(Q^M_0,\dots,Q^M_\kappa), P \in {\cal P}^d\}$ are orthogonal to $\,{}^t \rho^{1/2}$, or equivalently, ${\sf Vect}(S)$ is orthogonal to  $\,{}^t \rho^{1/2}$.

Take now any vector $c$, and consider the vector spaces 
$L_{1}(c)={\sf Vect} (c)$ and for any $m\geq 1$,
\[L_{m+1}(c)= {\sf Vect}(L_m(c), \{ x Q^M_y, x\in L_m(c),  0\leq y \leq \kappa\}).\]
The sequence $L_m(c)$ is strictly increasing till it becomes constant, because its dimension is bounded by that of the ambient space $\kappa+1$. For this reason, it reaches its final size for some $m\leq \kappa+1$.
Hence, if the vector space $L_{\kappa+1}(c)$ is orthogonal to $\,{}^t \rho^{1/2}$, then so does the $L_m(c)$ for $m> \kappa+1$.\\
To end the proof, it remains to note that the polynomial which appears in \eref{eq:crit} has degree $m-1$.
\end{proof}

Since the asymptotics of $\Trace(A^n)$ or $\Sr A^n~ {}^t\Sr$ are driven by the largest eigenvalues of $A$ (under mild conditions on $(\Sr,A)$), we have the following statement which can be used as some necessary conditions on the system $(M,T)$.
 \begin{pro} \label{pro:MV-ML}
  \begin{enumerate}[(a)]
  \item Assume that $(M,T)$ is solution to $(i)$  with $T$ a positive rate TM, then for any $\ell\geq 1$, any $x_1,\dots,x_\ell$ we have $\prod_{i=1}^{\ell} M_{x_i,x_{i+1 \mo \ell}}= \MV{\prod_{j=1}^{\ell}Q^{M}_{x_j}}$ (recall that  $\ME(A)$ is the maximum eigenvalue of the matrix $A$).
   \item Let $\ell\geq 1$ be fixed. Assume that $(M,T)$ is solution to $(ii)$ 
for at least $\kappa +1$ (this is $|E_\kappa|$) different positive integers $n$ of the form $n=k\ell$. In this case, for any $x_1,\dots,x_\ell,$ $\MV{\prod_{j=1}^{\ell}Q^{M}_{x_j}}=\prod_{i=1}^{\ell} M_{x_i,x_{i+1 \mo \ell}}$.
Moreover, all the matrices $\prod_{j=1}^{\ell}Q^{M}_{x_j}$ have rank 1. 
 \end{enumerate}
\end{pro}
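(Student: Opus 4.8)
The plan is to feed periodic words into Lemma \ref{lem:ecr-mat} and read off spectral information about the matrix $A:=\prod_{j=1}^{\ell}Q^{M}_{x_j}$ from the growth of $\Sr A^{k}\tran{\Sr}$ (case $(i)$) or of $\Trace(A^{k})$ (case $(ii)$). Fix $x_1,\dots,x_\ell$, write $P:=\prod_{i=1}^{\ell}M_{x_i,x_{i+1\mo\ell}}$, and let $w$ be the concatenation of $k$ copies of $(x_1,\dots,x_\ell)$, so that $\prod_{j}Q^{M}_{w_j}=A^{k}$. In case $(i)$ the word has length $m=k\ell$ and $\prod_{j=1}^{m-1}M_{w_j,w_{j+1}}=P^{k}/M_{x_\ell,x_1}$, the $k-1$ junction factors $M_{x_\ell,x_1}$ accounting for the discrepancy with $P^{k}$; hence Lemma \ref{lem:ecr-mat}$(i)$ gives, for every $k\ge 1$,
\[
\Sr A^{k}\tran{\Sr}=c\,P^{k},\qquad c:=\frac{\rho_{x_1}}{M_{x_\ell,x_1}}>0,
\]
where positivity of $c$ uses that a positive rate $T$ forces $M>0$ (Remark \ref{rem:cas_taux_positif}). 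In case $(ii)$ the cyclic word of length $n=k\ell$ gives directly $\prod_{j}M_{w_j,w_{j+1\mo n}}=P^{k}$, so Lemma \ref{lem:ecr-mat}$(ii)$ yields $\Trace(A^{k})=P^{k}$, one identity for each of the $\kappa+1$ exponents $k$ at which $(ii)$ holds.

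For part $(a)$ I would argue purely asymptotically. Since $\Sr$ and $\tran{\Sr}$ have strictly positive entries and $A\ge 0$, the form $\Sr A^{k}\tran{\Sr}=\sum_{i,j}\sqrt{\rho_i}\sqrt{\rho_j}(A^{k})_{i,j}$ is sandwiched between constant multiples of the sum of all entries of $A^{k}$, whence $(\Sr A^{k}\tran{\Sr})^{1/k}\to\MV{A}$ by Gelfand's formula for the Perron eigenvalue of a nonnegative matrix. Comparing with $(c\,P^{k})^{1/k}\to P$ gives $\MV{A}=P$. The degenerate cases are clean: if $P=0$ then $A$ is nilpotent (the form vanishes for large $k$), so $\MV{A}=0=P$ as well.

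For part $(b)$ the data are the $\kappa+1$ power‑sum identities $\sum_{i=0}^{\kappa}\lambda_i^{k_s}=P^{k_s}$, where $\lambda_0,\dots,\lambda_\kappa$ are the eigenvalues of $A$, together with Perron--Frobenius (Proposition \ref{pro:PF}): $\MV{A}$ is a nonnegative eigenvalue dominating the others in modulus. The target is the spectrum $\{P,0,\dots,0\}$ and rank one, for then $A^2=PA$ and the single eigenvalue is $\Trace A=P=\MV{A}$. It is useful to pass to the conjugate (by $\Diag(\Sr)$) nonnegative matrix $\mathbf B:=B_{x_1}\cdots B_{x_\ell}$, where $(B_x)_{i,j}=M_{i,j}\TT{i}{j}{x}$, which has the same rank and spectrum as $A$; its rank‑one companion is $\mathbf C:=C_{x_1}\cdots C_{x_\ell}$, where $(C_x)_{i,j}=M_{x,j}$ if $i=x$ and $0$ otherwise. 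Each $C_x$ has rank one, so $\mathbf C$ has $\rank\le 1$ with single eigenvalue $\Trace\mathbf C=P$, and $\sum_x B_x=\sum_x C_x=M$. Condition $(ii)$ at length $n$ is then \emph{exactly} the word‑by‑word identity $\Trace(B_{y_1}\cdots B_{y_n})=\Trace(C_{y_1}\cdots C_{y_n})$ for every word $y$ of that length. When the exponents $k_s$ may be taken consecutive, Newton's identities convert the power sums into $e_1=P,\ e_2=\dots=e_{\kappa+1}=0$ for the elementary symmetric functions of the $\lambda_i$, pinning the spectrum to $\{P,0,\dots,0\}$.

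The main obstacle is twofold, and this is where the genuine work lies. First, $\kappa+1$ power sums at \emph{arbitrary} exponents do not determine the spectrum: $\{P,P,-P\}$ matches $\{P,0,0\}$ at every odd power and such peripheral eigenvalues are compatible with $A\ge 0$, so the single periodic word is insufficient. Second, even the correct spectrum $\{P,0,\dots,0\}$ does not force $\rank=1$, since a nonnegative matrix may carry a nilpotent part on its kernel. Both gaps must be closed using that one is a solution of $(ii)$ \emph{for all} words at each of the $\kappa+1$ lengths, not only the periodic one. Concretely I would play the identities $\Trace(B_{y_1}\cdots B_{y_n})=\Trace(C_{y_1}\cdots C_{y_n})$ (with $\mathbf C$ and all its relatives of rank one) against nonnegativity, and remove short cyclic sub‑patterns of length $\le\kappa+1$ exactly as in the pigeonhole argument in the proof of Theorem \ref{thm:cczz}, in order to upgrade the finite family of identities to $\mathbf B=\mathbf C$ in rank. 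Simultaneously excluding a nontrivial peripheral spectrum and a nilpotent part is the hard step; once $\rank(\mathbf B)=1$ is secured, $\MV{A}=\Trace\mathbf B=P$ follows immediately.
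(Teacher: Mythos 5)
Part $(a)$ of your proposal is correct and is in substance the paper's own argument: the authors also feed the periodic word into Lemma \ref{lem:ecr-mat}$(i)$ (they write out $\ell=1$, getting $\rho_x M_{x,x}^{m-1}=\Sr (Q_x^M)^m\,\tran{\Sr}$, and repeat the pattern for general $\ell$), note via Remark \ref{rem:cas_taux_positif} that positive rates force $M>0$ and hence positivity of the matrices $Q^M_{x_1}\cdots Q^M_{x_\ell}$, and compare growth rates; they use the Perron--Frobenius asymptotics $\Sr A^m \tran{\Sr}\sim \MV{A}^m(\Sr \RV{A})(\LV{A}\tran{\Sr})$ where you use an entrywise sandwich and Gelfand's formula, an immaterial difference. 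Your junction bookkeeping $c=\rho_{x_1}/M_{x_\ell,x_1}$ is right, and under positive rates your degenerate case $P=0$ never occurs.

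Part $(b)$ is where the proposal stops being a proof: you assemble the correct identities $\Trace(A^{k_s})=P^{k_s}$ at $\kappa+1$ exponents, conjugate to the pair $\mathbf B$, $\mathbf C$ (all of this is sound), but then you explicitly leave the ``hard step'' --- pinning the spectrum to $\{P,0,\dots,0\}$ and excluding a nilpotent part --- unresolved, sketching only a hoped-for pigeonhole argument in the style of Theorem \ref{thm:cczz}. Since those two statements \emph{are} the content of $(b)$, the proposal is a programme, not a proof. The paper's own proof is precisely the direct deduction you decline to make: from $M_{x,x}^n=\Trace((Q_x^M)^n)=\sum_i\lambda_i^n$ at the $\kappa+1$ lengths it concludes that all eigenvalues vanish except one equal to $M_{x,x}$, whence the rank statement. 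Note that your first stated obstacle, the multiset $\{P,P,-P\}$, is void in this setting: by Proposition \ref{pro:PF} the Perron root of the \emph{positive} matrix $A$ is simple and strictly dominant in modulus, so no second eigenvalue of modulus $P$ can occur --- you cite Perron--Frobenius but never apply it to your own example. Your residual worries do touch genuine tersenesses in the paper's two-line deduction (a PF-compatible spectrum such as $\{s,ir,-ir\}$ with $0<r<s$ matches the power sums of $\{s,0,0\}$ at any family of odd exponents, and a spectrum $\{P,0,\dots,0\}$ alone does not by itself exclude a nilpotent part of rank $\geq 2$; the paper uses only the constant word and does not discuss either point). But identifying soft spots is not closing them: to land the statement you should at least carry out the Perron--Frobenius-plus-power-sums computation --- immediate via Newton's identities when the exponents $k_s$ can be taken to be $1,\dots,\kappa+1$ --- and say explicitly what extra input (if any, e.g.\ non-constant words, as you suggest) is invoked for arbitrary exponents and for the rank-one conclusion.
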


\begin{rem} In Proposition \ref{pro:MV-ML}, we can replace the positive rate condition by a weaker one: we only need the primitivity of the matrices $\prod_{j=1}^{\ell}Q^{M}_{x_j}$ for any $\ell, x_1,\dots,x_\ell$.  
  But this condition is a bit difficult to handle since it does not follow the primitivity of the family of matrices $Q_x^M$. 
\end{rem}

\noindent \it Proof. \rm We give a proof in the case $\ell=1$ and for case $(i)$ and $(ii)$ for sake of simplicity, but exactly the same argument applies for larger $\ell$ (by repeating the pattern $(x_1,\dots,x_\ell)$ instead of $x$ alone).
 Following Remark \ref{rem:cas_taux_positif}, the positive rate condition on $T$ implies that if the law of MC with MK $M$ is invariant by $T$, then the matrices $Q_{x_1}^M\dots Q_{x_\ell}^M$ have positive coefficients.
  \begin{enumerate}[(a)] 
  \item Let $m\geq 1$. Taking $x_1=\dots=x_m=x$ in Lemma~\ref{lem:ecr-mat}, we get
          $\rho_{x} M_{x,x}^{m-1} = \Sr \l(Q^M_x\r)^m \tran{\Sr}$.
        By Perron-Frobenius we obtain for $\RV{Q^M_x}$ and $\LV{Q^M_x}$ the Perron-RE and LE of $Q_x^M$ normalised so that $\LV{Q^M_x}\RV{Q^M_x}=1$,
    $\rho_{x} M_{x,x}^{m-1} \underset{m \to \infty}{\sim} \MV{Q^M_x}^m \l( \Sr \RV{Q^M_x} \LV{Q^M_x} \tran{\Sr} \r)$.
        Hence, necessarily, $\MV{Q^M_x} = M_{x,x}$.

  \item Let $x$ be fixed. Assume that  $(ii)$ holds for $\kappa+1$ different integers $n=n_i$ for $i=0,\dots,\kappa$. For all $n \in \{n_0,\dots,n_{\kappa}\}$, 
   $      M_{x,x}^n = \Trace \l(\l(Q^M_x\r)^n\r) = \sum_i \lambda_i^n $   where $(\lambda_i, 0\leq i\leq \kappa)$ are the eigenvalues of $Q^M_x$ from what we deduce that all the eigenvalues of $Q^M_x$ equals 0, but 1 which is $M_{x,x}$.~~$\Box$
  \end{enumerate}

One can design various sufficient conditions for $T$ to satisfy $(i)$, $(ii)$.   
For example,  for the case $(i)$  
following the proof of Proposition \ref{pro:PS}, it suffices that for any $x_1,x_2,$ 
$\frac{\rho^{1/2}}{\rho_{x_1}} Q_{x_1}^M=\frac{M_{x_1,x_2}}{\rho_{x_2}}\rho^{1/2}$
for the law of the  $M$-MC to be invariant under $T$.

\subsubsection{I.i.d. case} \label{sec:iid}
If we search to determine the TM $T$ for which there exists an invariant product measure (instead of more general MC), the content of Section \ref{sec:MID} still applies since product measures are MC whose MK satisfies, for any $(a,b)\in E_\kappa^2$,  $M_{a,b}=\rho_b$. In this case 
\[Q_x^M=Q^\rho_x=\bma\sqrt{\rho_i}\,\, \TT{i}jx\,\sqrt{\rho_j}  \ema_{0\leq i,j\leq \kappa}.\]
The iid case is also interesting, as has been shown by Mairesse \& Marcovici \cite{1207.5917}.
We can design some additional sufficient conditions for the product measure $\rho^{\mathbb{Z}}$ to be invariant under $T$. For example if
\begin{equation} \label{eq:MM}
\frac{\rho^{1/2}}{\rho_{x}} Q_{x}^\rho = \rho^{1/2}, ~~\textrm{ for any } x\in E_{\kappa}
\end{equation} (see also  Mairesse \& Marcovici \cite[Theorem 5.6]{1207.5917}), or if for any  words $W_1$ and $W_2$, and any $0\leq x,y \leq \kappa$, $\dis Q^\rho_{W_1} (Q^\rho_xQ^\rho_y-Q^\rho_yQ^\rho_x)Q^\rho_{W_2}=0$ then  $\rho^{\mathbb{Z}}$ is invariant under $T$. Necessary and sufficient conditions on $T$ seem out of reach for the moment.

\subsubsection{Symmetric transition matrices and i.i.d. invariant measure }
\label{sec:stiim}

We say that a TM $T$ is symmetric if for any $a,b,c \leq \kappa$, $\TT{a}bc=\TT{b}ac$.
Let $T$ be a symmetric transition matrix of a PCA $A$ in $\PCA(\LL,E_\kappa)^\star$ with positive rate
and let $\rho \in {\cal M}^\star(E_\kappa)$ be a distribution on $E_\kappa$ with full support. 
A distribution $\mu$  in ${\cal M}(E_\kappa^\mathbb{Z})$ is said to be symmetric if $\mu(x_1,\dots,x_n)=\mu(x_n,\dots,x_1)$ for any $n\geq 1, 0\leq x_1,\dots, x_n \leq \kappa$.
We start by two simple observations valid for PCA with a symmetric TM:\\ 
$\bullet$ by a compacity argument (easily adapted from Prop. 2.5 p.25 in \cite{dkt}), there exists a symmetric distribution $\mu$ in ${\cal M}(E_\kappa^\mathbb{Z})$ invariant by $T$. \\
$\bullet$ for any $x$ the matrix $Q^\rho_x$ is symmetric and then Hermitian. Hermitian matrices possess some important properties, which help to go further:
\begin{enumerate}[(a)]
\item $r$ is a right eigenvector for an Hermitian matrix $A$ associated with the eigenvalue $\lambda$ (that is $rA=\lambda A$) iff ${}^t r$ is a right eigenvector of $A$ associated with $\lambda$ (that is $A{}^t r=\lambda{}^t r$).
\item If  $A$ and $B$ are two Hermitian matrices then $\MV{A+B} \leq \MV{A} + \MV{B}$. The equality holds only if the (left, and then right by $(a)$) eigenspaces of the matrices $A$ and $B$ associated with the respective eigenvalues $\MV{A}$ and $\MV{B}$ are equal.
\end{enumerate}
 
\begin{pro} \label{pro:rhocom} Let $T$ be a symmetric TM with positive rate.
  \begin{enumerate}[$(a)$]
  \item   $\rho^{\Z}$ is invariant by $T$ on $\H$ iff \eref{eq:MM} holds.
  \item   $\rho^{\Z(n)}$ is invariant by $T$ on $\H(n)$ for at least $\kappa+1$ different positive integers $n$ iff for any $i,j,x \in E_\kappa$, $\TT{i}jx = \rho_x$.
\end{enumerate}
\end{pro}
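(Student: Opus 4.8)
The plan is to handle the easy implication of each part directly and to concentrate the work on the converses, where the symmetry of $T$ is the essential ingredient. Throughout I use that, for a symmetric $T$, each $Q^\rho_x$ is symmetric (as already noted before the statement) and that in the i.i.d.\ case $M_{i,j}=\rho_j$, so $M_{x,x}=\rho_x$.

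For part $(a)$, the implication ``\eref{eq:MM} $\Rightarrow$ invariance'' needs no symmetry: it is the sufficient condition recorded just after \eref{eq:MM}. Indeed, in the i.i.d.\ case the bracket in \eref{eq:crit} becomes $\frac{\Sr}{\rho_{x_1}}Q^\rho_{x_1}-\Sr$, which vanishes under \eref{eq:MM}, so \eref{eq:qgdsf} holds for every $m$ and Lemma \ref{lem:ecr-mat}$(i)$ applies. For the converse I would set the row vector $w_x:=\frac{\Sr}{\rho_x}Q^\rho_x-\Sr$ and aim to show $w_x=0$, which is exactly \eref{eq:MM}. Assuming $\rho^{\Z}$ invariant, the i.i.d.\ specialisation of \eref{eq:crit} reads $w_{x_1}\bigl(Q^\rho_{x_2}\cdots Q^\rho_{x_m}\bigr)\tran{\Sr}=0$ for all $m\ge 1$ and all indices (the case $m=1$ being the empty product). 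This says precisely that $\tran{w_x}$ is orthogonal to the subspace $L:={\sf Vect}\{P(Q^\rho_0,\dots,Q^\rho_\kappa)\tran{\Sr}\}$ spanned by all monomials in the $Q^\rho_y$ applied to $\tran{\Sr}$. Symmetry now enters: since $Q^\rho_x$ is symmetric, $\tran{w_x}=\frac1{\rho_x}Q^\rho_x\tran{\Sr}-\tran{\Sr}$, and both $Q^\rho_x\tran{\Sr}$ and $\tran{\Sr}$ are generators of $L$, so $\tran{w_x}\in L$. A vector lying in $L\cap L^\perp$ is zero, hence $w_x=0$.

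For part $(b)$, ``$\TT{i}jx=\rho_x\Rightarrow$ invariance'' is immediate, since then the image under $T$ of any configuration is the product measure $\rho^{\Z(n)}$, for every $n$. For the converse I would apply Proposition \ref{pro:MV-ML}$(b)$ with $\ell=1$: as $\rho^{\Z(n)}$ solves $(ii)$ for $\kappa+1$ distinct integers $n=k\cdot 1$, every $Q^\rho_x$ has rank one and $\MV{Q^\rho_x}=\rho_x$. A rank-one symmetric matrix with positive entries and positive top eigenvalue must be $Q^\rho_x=\rho_x\,\phi_x\tran{\phi_x}$ for a positive unit eigenvector $\phi_x$. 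I would then use the stochasticity identity $\sum_x Q^\rho_x=\tran{\Sr}\,\Sr$ (valid because $\sum_x\TT{i}jx=1$), observing that $\|\tran{\Sr}\|^2=\sum_i\rho_i=1$, so $\tran{\Sr}\,\Sr$ is the orthogonal projection onto $\mathbb{R}\tran{\Sr}$. Thus $\sum_x\rho_x\phi_x\tran{\phi_x}$ equals this projection; testing against any $v\perp\tran{\Sr}$ yields $\sum_x\rho_x\langle\phi_x,v\rangle^2=0$, which forces each $\phi_x\perp v$, hence $\phi_x\in\mathbb{R}\tran{\Sr}$ and, by positivity, $\phi_x=\tran{\Sr}$. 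Therefore $Q^\rho_x=\rho_x\,\tran{\Sr}\,\Sr$, i.e.\ $\TT{i}jx=\rho_x$.

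The main obstacle lies in the two converses. In $(a)$ the crux is the twin observation that invariance pushes $\tran{w_x}$ into $L^\perp$ while symmetry pulls it into $L$; the only delicate bookkeeping is keeping the transpositions straight when passing from the row identity \eref{eq:crit} to the orthogonality statement. In $(b)$ the hard part is extracting the common eigenvector: rank one together with the eigenvalue data from Proposition \ref{pro:MV-ML} is not by itself enough, and it is the convexity/projection argument based on $\sum_x Q^\rho_x=\tran{\Sr}\,\Sr$, combined with the positivity of the $\phi_x$, that pins down $\phi_x=\tran{\Sr}$. I would take particular care to verify the normalisation $\|\tran{\Sr}\|^2=1$, since it is exactly what makes the right-hand side a genuine projection and collapses the test-vector computation.
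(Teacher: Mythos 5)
Your proof is correct, and in part $(a)$ it takes a genuinely different route from the paper's. For the necessity in $(a)$, the paper specialises \eref{eq:qgdsf} to $x_1=\dots=x_m=x$ to get $\Sr\,(Q^\rho_x)^m\,\tran{\Sr}=\rho_x^m$, deduces $\MV{Q^\rho_x}=\rho_x$ from Perron--Frobenius asymptotics, and then uses the Hermitian subadditivity $\MV{A+B}\leq\MV{A}+\MV{B}$: since $\sum_x Q^\rho_x=\tran{\Sr}\Sr$ has Perron eigenvalue $1=\sum_x\rho_x$, the equality case forces every $Q^\rho_x$ to admit $\Sr$ and $\tran{\Sr}$ as Perron eigenvectors, which is \eref{eq:MM}. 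Your argument --- the i.i.d.\ specialisation of \eref{eq:crit} places $\tran{w_x}$ in $L^\perp$, while the symmetry $\tran{Q^\rho_x}=Q^\rho_x$ places it in $L$ (it is a combination of the degree-one generator $Q^\rho_x\,\tran{\Sr}$ and the degree-zero generator $\tran{\Sr}$), so $\tran{w_x}\in L\cap L^\perp=\{0\}$ --- is purely linear-algebraic, in the spirit of the proof of Proposition~\ref{pro:PS}, and dispenses with spectral theory entirely. It also buys something concrete: it nowhere uses the positive rate hypothesis in $(a)$, only the symmetry of $T$ and $\rho_x>0$, whereas the paper's proof needs positivity (or at least primitivity of the $Q^\rho_x$, as the remark after the proposition acknowledges); your bookkeeping of the $m=1$/empty-product base case and of the transpositions is accurate. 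In part $(b)$ you run essentially parallel to the paper: both proofs extract rank one and $\MV{Q^\rho_x}=\rho_x$ from Proposition~\ref{pro:MV-ML}$(b)$ with $\ell=1$, and both hinge on $\sum_x Q^\rho_x=\tran{\Sr}\Sr$. Where the paper reuses the equality case of Hermitian subadditivity to identify the common Perron eigenvectors and writes $Q^\rho_x=\MV{Q^\rho_x}\RV{Q^\rho_x}\LV{Q^\rho_x}=\rho_x\,\tran{\Sr}\Sr$, you write $Q^\rho_x=\rho_x\,\phi_x\tran{\phi_x}$ and test the projection identity against $v\perp\tran{\Sr}$, using $\rho_x>0$ to force $\langle\phi_x,v\rangle=0$; this is a more hands-on but equivalent computation, and your explicit check that $\|\tran{\Sr}\|^2=\sum_i\rho_i=1$ is precisely the normalisation the paper uses implicitly.
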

The positive rate condition allows one to use Perron-Frobenius theorem on the matrices $(Q_x^\rho,x \in E_\kappa)$  in the case where  the Perron-eigenspaces have dimension 1. The proposition still holds if we replace the positive rate condition by a weaker one for example the primitivity of the matrices $(Q_x^\rho,x \in E_\kappa)$.\\
\it Proof. \rm $(a)$  Assume first that $\rho^{\Z}$ is invariant by $T$ on $\H$. Then, we have, for any $n\geq 1$, the equality $\Sr \l( Q^\rho_{x}\r)^n \tran{\Sr}=\rho_x^n$. By Frobenius, we deduce that $\ME(Q^\rho_x)=\rho_x$. Hence, $\MV{\sum_x Q_x^\rho}= \sum_x\MV{Q_x^\rho}=1$ (by the properties of Hermitian matrices recalled above), all the matrices $Q_x^\rho$ and $\sum_x Q_x^\rho$ have same Perron-LE and RE that are $\Sr$ and $\tran{\Sr}$. \par
Reciprocally, assume that for all $x \in E_\kappa$, the Perron eigenvalue of $Q_x^\rho$ is $\rho_x$ and  $\Sr$ and $^t{} \Sr$ are Perron-LE and RE of $Q_x^\rho$. Then for any $m$ for any $x_1,\dots,x_m \in E_\kappa$,
\begin{displaymath}
\Sr \l(\prod_{j=1}^m Q^\rho_{x_j}\r) \tran{\Sr} = \rho_{x_1} \Sr \l(\prod_{j=2}^{m} Q^\rho_{x_j}\r) \tran{\Sr} = \dots = \prod_{j=1}^m \rho_{x_j} \Sr ~\tran{\Sr} = \prod_{j=1}^m \rho_{x_j}
\end{displaymath}
which means that $\rho^\mathbb{Z}$ is invariant by $T$.\\
Proof of $(b)$.  By the same argument as in $(a)$, $Q_x^\rho$ and  $\sum_x Q_x^\rho$ have $\Sr$ and $\tran{\Sr}$ for Perron-LE and RE. Moreover since the rank of $Q_x^\rho$ is $1$ (see Proposition~\ref{pro:MV-ML}),
\begin{displaymath}
Q_x^\rho = \MV{Q_x^\rho} \RV{Q_x^\rho} \LV{Q_x^\rho} = \rho_x ~{}^t{\Sr} \Sr = \bma ~{}^t\Sr_i \rho_x  \Sr_j \ema_{i,j}.
\end{displaymath}
But, $\l(Q_x^\rho\r)_{i,j} = ~{}^t\Sr_i \TT{i}jx \Sr_j$. Then, for all $x,i,j$, $\TT{i}jx = \rho_x$. (The converse is trivial)~~~$\Box$

\section{Markov invariant distribution on $\H$ vs $\H(n)$ vs  $\HZ$ vs $\HZ(n)$}
\label{sec:RM}
Consider a TM $T$ seen as acting on $\H$, $\H(n)$, $\HZ$ and $\HZ(n)$. In this section we discuss the different conclusions we can draw from the Markovianity of the invariant distribution under $T$ on one of these structures. 
Before going into the details, we think that it is interesting to note that any Markov measure on $\H$ is the invariant measure for a PCA (as stated Prop. 16.1 in \cite{dkt}).

Figure \ref{fig:imp} gathers most of the results obtained in this section. 
\begin{figure}[ht]
\centerline{\includegraphics{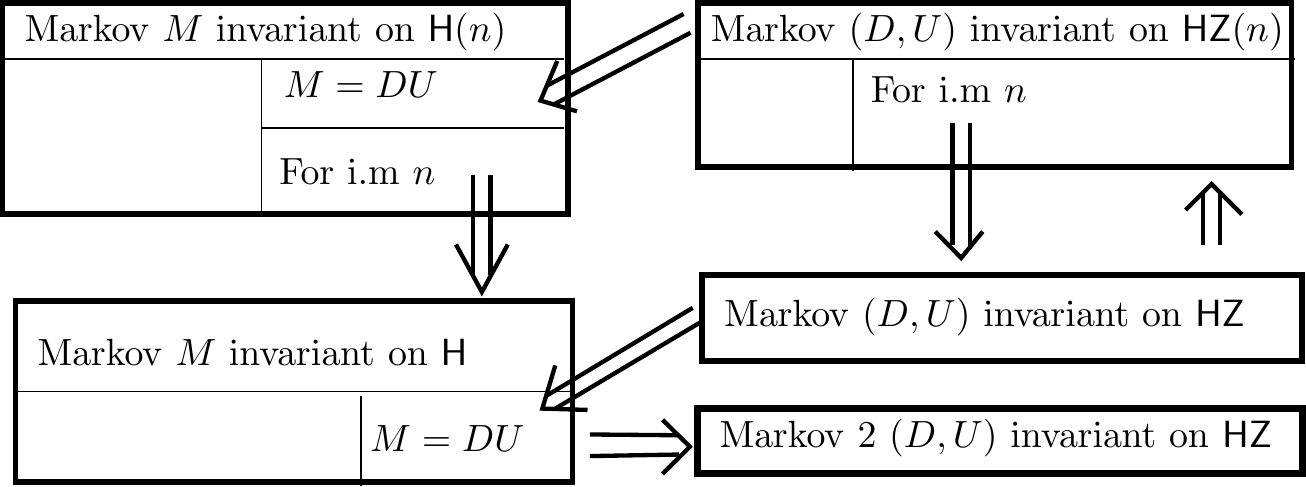}}
\captionn{\label{fig:imp} Relations between the existence of Markovian invariant distribution on the different structures. ``i.m.'' means ``infinitely many''}
\end{figure}

\paragraph{From $\H(n)$ to $\H$.} The following Proposition is already known (see Albenque \cite{Alb} and a ``formal'' version is also used in Bousquet-Mélou \cite{MBM}).
\begin{pro}\label{pro:LntoL}
If a \PCA{}  $A:=(\cyl{n}, E_\kappa,N,T)$ admits the  law of a CMC with an irreducible MK $M$ on $\H(n)$ as an invariant distribution for infinitely many $n$ then the PCA $A:=(\Z, E_\kappa,N,T)$ admits the law of the  $M$-MC as an invariant distribution on $\H$.
\end{pro}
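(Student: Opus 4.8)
The plan is to deduce the infinite-volume Markov identity for the $M$-MC on $\H$ from the finitely many cyclic identities on $\H(n)$, by passing to a limit in $n$ along the infinite set of cylinder sizes for which the CMC is invariant. The natural bridge between the two settings is Lemma \ref{lem:ecr-mat}: invariance of the $M$-CMC on $\H(n)$ is equivalent to \eref{eq:phted}, namely $\prod_{j=1}^n M_{x_j,x_{j+1\mo n}}=\Trace\l(\prod_{j=1}^n Q^M_{x_j}\r)$ for all words, while invariance of the $(\rho,M)$-MC on $\H$ is equivalent to \eref{eq:qgdsf}. So the whole problem reduces to showing that the trace identities, holding for infinitely many $n$, force the vector identity \eref{eq:qgdsf} for every fixed word length $m$.

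First I would fix a word $x_1,\dots,x_m\in E_\kappa$ and set $B:=\prod_{j=1}^m Q^M_{x_j}$, an $(\kappa+1)\times(\kappa+1)$ matrix. For each cylinder size $n=km$ that is a multiple of $m$ and for which the CMC is invariant, I would apply \eref{eq:phted} to the periodic word obtained by repeating $(x_1,\dots,x_m)$ exactly $k$ times. The left-hand side becomes $\l(\prod_{j=1}^m M_{x_j,x_{j+1\mo m}}\r)^k$ and the right-hand side becomes $\Trace(B^k)=\sum_i \lambda_i^k$, where the $\lambda_i$ are the eigenvalues of $B$. Since $T$ is understood to be positive rate (the MK $M$ being irreducible, and $Q^M_x$ having the sign pattern of $T$), each $Q^M_{x_j}$ has positive entries, so $B$ is a primitive matrix and Perron--Frobenius (Proposition \ref{pro:PF}) applies: $B$ has a simple dominant eigenvalue $\ME(B)>0$ strictly larger in modulus than all the others. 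This is exactly the situation of Proposition \ref{pro:MV-ML}(b): having the trace identity for $\kappa+1$ distinct multiples $n=km$ pins down all the eigenvalues of $B$, forcing $\ME(B)=\prod_{j=1}^m M_{x_j,x_{j+1\mo m}}$ and all other eigenvalues to vanish, so that $B$ has rank $1$.

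With $B$ of rank $1$ and dominant eigenvalue equal to the cyclic $M$-weight, I would write $B=\ME(B)\,\RV{B}\LV{B}$ with $\LV{B}B=\ME(B)\LV{B}$, $B\RV{B}=\ME(B)\RV{B}$ and $\LV{B}\RV{B}=1$. The remaining step is to identify $\LV{B}$ and $\RV{B}$ with $\Sr=\bma\sqrt{\rho_i}\ema_i$ and $\tran{\Sr}$ respectively, which is what converts the trace statement into the half-line statement \eref{eq:qgdsf}. The cleanest way is to analyse single-letter words: for $x_1=\dots=x_m=x$, Proposition \ref{pro:MV-ML}(b) already gives $\ME(Q^M_x)=M_{x,x}$ and $Q^M_x$ of rank $1$, and one identifies its Perron left/right eigenvectors from the structure $(Q^M_x)_{i,j}=\frac{\sqrt{\rho_i}}{\sqrt{\rho_j}}M_{i,j}\TT{i}jx$ together with the row-sum relation $\sum_x \TT{i}jx=1$, which makes $\Sr$ and $\tran\Sr$ the common Perron eigenvectors of $\sum_x Q^M_x$. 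Propagating these common eigenvectors through the product $B$ then yields $\Sr B\,\tran\Sr=\prod_{j=1}^m M_{x_j,x_{j+1\mo m}}$, and a telescoping argument (peeling off one $Q^M_{x_j}$ at a time, exactly as in the reciprocal direction of the proof of Proposition \ref{pro:rhocom}(a)) upgrades the closed-word identity to the open-word identity \eref{eq:qgdsf}, establishing invariance on $\H$ via Lemma \ref{lem:ecr-mat}(i).

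The step I expect to be the main obstacle is the last one: going from the trace (closed cyclic word) identities to the vector-sandwich (open word) identities \eref{eq:qgdsf} required on $\H$, since a priori the trace only controls $\sum_i\lambda_i^k$ and not the individual eigenvectors needed to evaluate $\Sr B\,\tran\Sr$. The rank-$1$ conclusion from Proposition \ref{pro:MV-ML}(b) is what makes this tractable, because once $B=\ME(B)\,\tran\Sr\,\Sr$ (up to normalisation) with fixed eigenvectors independent of the word, the open-word sandwich is determined. I would need to check carefully that the Perron eigenvectors are genuinely the same $\Sr,\tran\Sr$ for every $Q^M_x$, i.e.\ that irreducibility of $M$ plus the column-stochasticity of $T$ force this alignment; this is exactly the content borrowed from the symmetric argument in Proposition \ref{pro:rhocom}, and its non-symmetric analogue here is the delicate point to get right.
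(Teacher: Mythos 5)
Your reduction via Lemma~\ref{lem:ecr-mat} is fine as a reformulation, but the spectral route you then take has two genuine gaps. The first is arithmetic: to apply Proposition~\ref{pro:MV-ML}$(b)$ to $B=\prod_{j=1}^m Q^M_{x_j}$ you evaluate \eref{eq:phted} on the $k$-fold repetition of $(x_1,\dots,x_m)$, which requires cylinder sizes $n=km$ belonging to the set $O$ of sizes for which the CMC is invariant. The hypothesis of Proposition~\ref{pro:LntoL} only says that $O$ is infinite; it need not contain $\kappa+1$ multiples of $m$, nor any (take $O$ the set of primes and $m=4$). Even in the single-letter case $m=1$, where every $n\in O$ qualifies, deducing that all non-Perron eigenvalues of $Q^M_x$ vanish from $\sum_i\lambda_i^n=M_{x,x}^n$ for $n$ ranging over an \emph{arbitrary} infinite set is not valid: a conjugate pair $\pm i\mu$ contributes $2\mu^n\cos(n\pi/2)=0$ along all odd $n$, so if $O$ consists of odd integers the trace identities are compatible with rank larger than $1$. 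Relatedly, you assume that ``$T$ is understood to be positive rate'', but positivity of $T$ is not among the hypotheses of the proposition; only irreducibility of $M$ is.

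The second gap is the eigenvector identification, which is not merely delicate but false in general. The row-sum relation $\sum_x \TT{i}{j}{x}=1$ shows that $\Sr$ and $\tran{\Sr}$ are Perron LE/RE of the \emph{sum} $\sum_x Q^M_x$ (with eigenvalue $1$, when $\rho$ is invariant for $M$), but nothing transfers this to the individual $Q^M_x$: the tool you invoke for the transfer, the equality case of $\MV{A+B}\le\MV{A}+\MV{B}$ in Proposition~\ref{pro:rhocom}, is specific to Hermitian matrices, and here the $Q^M_x$ are not symmetric (general $T$, Markov $M$ rather than a product measure). Worse, if $\Sr$ and $\tran{\Sr}$ \emph{were} Perron eigenvectors of every $Q^M_x$, then $\Sr B\,\tran{\Sr}=\prod_{j=1}^m M_{x_j,x_j}$, whereas \eref{eq:qgdsf} requires $\rho_{x_1}\prod_{j=1}^{m-1}M_{x_j,x_{j+1}}$; these disagree except in degenerate cases such as $\TT{i}{j}{x}=\rho_x$ (compare Proposition~\ref{pro:rhocom}$(b)$). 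So the left and right eigenvectors of the rank-one products necessarily depend on the word (on $x_1$ and $x_m$), and the word-independent alignment your plan needs cannot hold. The paper's proof avoids both problems by working with the measures rather than the spectra: it marginalises the invariance identity on ${\sf H}(n)$ to a fixed window of length $k$, obtaining \eref{eq:fond} for every $n\in O$ with $n\ge k$, and lets $n\to\infty$ along $O$, where $(M^{n-k})_{b_k,b_1}\to\rho_{b_1}$ by Perron--Frobenius applied to the irreducible aperiodic $M$ itself; the limit identity \eref{eq:fond2} is exactly the invariance of the $(\rho,M)$-MC on ${\sf H}$, valid for an arbitrary infinite $O$ and without any positivity assumption on $T$.
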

\noindent\it Proof.  \rm  Several proofs are possible. We adapt slightly the argument of Theorem~3 in~\cite{Alb}. The idea is to prove that the law of a $M$-CMC on $\H(n)$ converges to a $M$-MC on the line (limit taken in the set $O$, the set of integers $n$ for which the law of the  $M$-MC is invariant by $T$ on $H(n)$). Proceed as follows. Choose some $k\geq 1$. For $n\geq k$   in $O$, the probability of any pattern $b_1,\dots,b_k$  in $E_\kappa$ (in successive positions)  is for this distribution
\beq\label{eq:fond}
\l(\prod_{i=1}^{k-1} M_{b_i,b_{i+1}}\r) (M^{n-k})_{b_k,b_1}=\sum_{(a_1,\dots,a_{k+1})\in E_\kappa^{k+1}} \l(\prod_{i=1}^{k} M_{a_i,a_{i+1}} \TT{a_i}{a_{i+1}}{b_i}\r) (M^{n-k-1})_{a_{k+1},a_1}.
\eq
Since $M$ is an irreducible and aperiodic MK, by Perron-Frobenius theorem, $M^n\to M^{\infty}$ where $M^{\infty}$ is the matrix whose rows equal the stochastic LE $\rho$ of $M$. Therefore $(M^{n-k})_{b_k,b_1}\to \rho_{b_1}$ and the limit distribution for $\H(n)$ exists and satisfies
\[\P(S_i=b_i,i=1,\dots,k)=\rho_{b_1}\prod_{i=1}^{k-1} M_{b_i,b_{i+1}}\]
and satisfies, taking the limit in \eref{eq:fond},
\beq\label{eq:fond2}
\rho_{b_1}\prod_{i=1}^{k-1} M_{b_i,b_{i+1}}=\sum_{(a_1,\dots,a_{k+1})\in E_\kappa^{k+1}} \rho_{a_1}\l(\prod_{i=1}^{k} M_{a_i,a_{i+1}}\TT{a_i}{a_{i+1}}{b_i}\r).~~~~ \Box
\eq

\paragraph{From $\H$ to $\HZ$.}
\begin{pro}If the law of a $M$-MC is an invariant distribution for a PCA $A:=(\mathbb{Z}, E_\kappa,N,T)$ on the line, then seen as acting on the set of measures indexed by $\HZ$, $A$ admits the law of a HZMC with memory 2 as invariant distribution. 
\end{pro}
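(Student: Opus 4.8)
The plan is to take the obvious candidate measure and verify the two required properties separately. Let $\mu$ denote the law of the stationary $M$-MC on $\H$, which is invariant under $T$ by hypothesis, and let $(X(t),t\ge 0)$ be the time-indexed PCA Markov chain started from $X(0)\sim\mu$. Since $\mu$ is $T$-invariant, this time-indexed chain is stationary, so for every $t$ the pair $(X(t),X(t+1))$ has one and the same law; call it $\Pi$, a measure on $E_\kappa^{\HZ}$. I claim that $\Pi$ is the desired invariant HZMC with memory $2$.

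First I would dispatch invariance, which is essentially free. Recall that the action of $A$ on $\HZ$ sends a configuration on $\HZ_t$ (a pair of consecutive lines) to the configuration on $\HZ_{t+1}$ obtained by copying the upper line to the new lower line and applying $T$ once more to produce the new upper line. Fed with $(X(t),X(t+1))\sim\Pi$, this produces exactly $(X(t+1),X(t+2))$, where $X(t+2)$ is a genuine PCA step from $X(t+1)$; by time-stationarity its law is again $\Pi$. Hence $\Pi$ is invariant for $A$ acting on $\HZ$.

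The substance is to show that $\Pi$ has the claimed Markov structure. For this I would write out a finite-dimensional marginal explicitly. Using the $M$-MC form of $\mu$ and the conditional independence of the PCA step,
\[
\Pi\big(X_i(t)=a_i,\,0\le i\le n{+}1;\ X_i(t{+}1)=b_i,\,0\le i\le n\big)=\rho_{a_0}\Big(\prod_{i=0}^{n}M_{a_i,a_{i+1}}\Big)\prod_{i=0}^{n}\TT{a_i}{a_{i+1}}{b_i},
\]
where $\rho=\rho M$. Now I would relabel the sites in the order in which the zigzag $\HZ_t$ visits them, namely $a_0,b_0,a_1,b_1,\dots,a_n,b_n,a_{n+1}$. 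In these coordinates every factor depends on at most three consecutive sites: $M_{a_i,a_{i+1}}$ couples the two line-$t$ sites at zigzag-distance $2$, while $\TT{a_i}{a_{i+1}}{b_i}$ couples the triple formed by $b_i$ and its two zigzag neighbours $a_i,a_{i+1}$. Thus the finite-dimensional law is a product of local weights of range $2$ along the zigzag, and by the standard correspondence between such nearest/next-nearest potentials and Markov chains (Georgii \cite[Theo. 3.5]{geo11}) $\Pi$ is a Markov chain of memory $2$ on $\HZ$, i.e. a HZMC with memory $2$. The genuine distance-$2$ coupling carried by the factors $M_{a_i,a_{i+1}}$ is exactly what forces memory $2$ rather than the memory-$1$ form \eref{eq:Mar-ZZ}.

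The step needing the most care is the identification of the memory, together with the boundary bookkeeping. Viewing $\Pi$ as a chain on the pair-sequence $P_i=(a_i,b_i)$ makes the memory transparent, since $P_{i+1}$ interacts with the past only through $P_i$; however the factor $\TT{a_i}{a_{i+1}}{b_i}$ reaches forward to $a_{i+1}$, so one must either marginalise the last half-visited vertical domino or invoke the Gibbs–Markov equivalence to read off the conditional kernels cleanly. This is where I expect the only genuine bookkeeping, the remainder following immediately from the displayed factorisation.
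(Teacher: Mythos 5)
Your proposal is correct and follows essentially the same route as the paper: both take the stationary pair law of $(X(t),X(t+1))$, derive the identical factorisation $\rho_{a_0}\prod_{i} M_{a_i,a_{i+1}}\TT{a_i}{a_{i+1}}{b_i}$, and read off the order-$2$ Markov structure along the zigzag (memory $1$ on down steps, memory $2$ on up steps). The only difference is presentational: where you invoke the Gibbs--Markov correspondence, the paper simply exhibits the explicit kernels $D_{a,c}=\sum_i M_{a,i}\TT{a}{i}{c}$ and $U_{a,c,b}=M_{a,b}\TT{a}{b}{c}/D_{a,c}$ (with the convention $U_{a,c,b}=0$ when the denominator vanishes), which is exactly the marginalisation of the ``last half-visited domino'' you describe and sidesteps the positivity hypotheses implicit in the Gibbs--Markov equivalence.
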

\proof Take $D_{a,c}=\sum_i M_{a,i} \TT{a}{i}{c}$ and $U_{a,c,b}= \frac{M_{a,b}\TT{a}bc}{\sum_i M_{a,i}\TT{a}ic}$ or 0 if the denominator is 0 (in which case the numerator is 0 too). These kernels have to be understood as follows:
\[\P(S(0,1)=c | S(0,0)=a)=D_{a,c},~~ \P(S(1,0)=b ~|~S(0,0)=a, S(0,1)=c)=U_{a,c,b},\]
and they satisfy 
\ben\label{eq:M2}
D_{a,c}U_{a,c,b}=M_{a,b}\TT{a}bc.
\een
Roughly the Markov 2 property along the zigzag is Markov 1 along a $D$ steps and Markov 2 along a $U$ step. Now if the law of a  $M$-MC is invariant on $\H$, then for $\rho$ stochastic LE of $M$, we have by \eref{eq:M2}
\be
\P(S(i,0)=a_i,i=0,\dots,n+1, S(i,1)=b_i, i \in 0,\dots,n)&=&\rho_{a_0}\prod_{i=0}^n M_{a_i,a_{i+1}}  \TT{a_i}{a_{i+1}}{b_i}\\
&=&\rho_{a_0}\prod_{i=0}^n D_{a_i,b_{i}} U_{a_i,b_i,a_{i+1}}
\ee
which is indeed the representation of a Markov 2 process with MK $(D,U)$ on $\HZ$.~$\Box$

\begin{rem}In the previous proof we saw that if $M$ is Markov on $\H$, then it is Markov 2 on $\HZ$ with memory 1 on a down step, and 2 on a up step. What it is true too, is that to this kind of process one can associate a Markov 1 process with MK $M'$ on $\H$ with values in $E_\kappa^2$ (as illustrated on Figure \ref{fig:reck}) by ``putting together'' the state $S_t(i)$ and $S_{t+1}(i)$. \begin{figure}[ht]
\centerline{\includegraphics{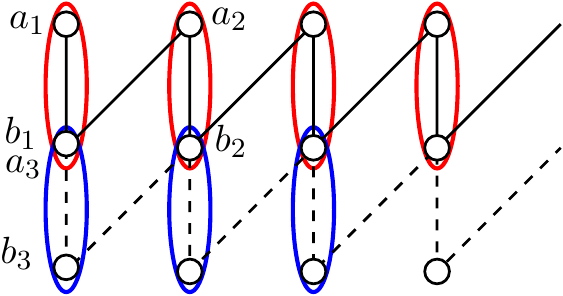}}
\captionn{\label{fig:reck} From PCA with Markov 2 invariant distribution to PCA with Markov 1.}
\end{figure}
The associated PCA is $A'=(\Z,E_\kappa^2,N,T')$ with 
$\TTp{(a_1,b_1)}{(a_2,b_2)}{(a_3,b_3)}=1_{b_1=a_3}\TT{b_1}{b_2}{b_3}$ and the MK is 
\[M'_{(a_1,b_1),(a_2,b_2)}=U_{a_1,b_1,a_2}D_{a_2,b_2}.\] 
Nevertheless the PCA $A'$ has a lot of transitions equal to 0 which makes that our criterion for Markovianity fails.
\end{rem}

\paragraph{From $\HZ(n)$ to $\H(n)$ and from $\HZ$ to $\H$.}
We have already said that the restrictions of a HZMC on $\HZ_t$ (resp. a HZCMC on $\HZ_t(n)$) on the lines on $\H_t$ and $\H_{t+1}$ (resp. $\H_t(n)$ and $\H_{t+1}(n)$) were MC (resp. CMC). As a consequence,  if a PCA  $A:=(\LL, E_\kappa,N,T)$ seen as acting on ${\cal M}(E_\kappa^{\HZ})$ (resp. ${\cal M}(E_\kappa^{\HZ(n)})$) admits the law of a HZMC (resp. HZCMC) as invariant distribution, then seen as acting on ${\cal M}(E_\kappa^{\H})$ (resp. ${\cal M}(E_\kappa^{\H(n)})$), it admits the law of a MC (resp. CMC) as invariant distribution. \par

 \begin{rem} \label{rem:HvsHZ}$\bullet$ The converse is not true. Indeed as seen in Section \ref{sec:KR} if $\TT110 \TT001 = \TT100 \TT011$ or $\TT110 \TT001 = \TT010 \TT101$, a product measure is invariant on $\H$ but one can check that in these cases the stationary distribution on $\HZ$ is not a HZMC.
\end{rem}

\paragraph{From $\HZ$ to $\HZ(n)$.}
\begin{pro} Let $A:=(\mathbb{Z}, E_\kappa,N,T)$ be a PCA. If the law of the  $(D,U)$-HZMC on $\HZ$ is invariant by $A$ then 
the  law of the  $(D,U)$-HZMC on $\HZ(n)$ is invariant by $A$.
\end{pro}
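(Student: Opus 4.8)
The plan is to deduce the statement directly from the two characterisation results already established, namely Theorem~\ref{thm:clzz} for the infinite zigzag and Theorem~\ref{thm:cczz} for the cyclic one, the point being that the hypothesis is strictly stronger than what is required cyclically. Throughout we are in the regime $(DU)_{a,b}>0$ for all $a,b$ under which the HZMC (and the HZCMC) are considered and under which both theorems apply. First I would invoke Theorem~\ref{thm:clzz}: since the law of the $(D,U)$-HZMC is invariant by $A$ on $\HZ$, the triple $(D,U,T)$ satisfies both \C \ref{cond:DUT} and \C \ref{cond:commut}, that is $DU=UD$. Note that \C \ref{cond:DUT} involves only the entries of $D$, $U$ and $T$ and makes no reference to the underlying lattice, so it carries over verbatim to the cyclic problem.

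The only remaining point is to verify \C \ref{cond:commut-cyl}, the diagonal condition appearing in Theorem~\ref{thm:cczz}. This is immediate: from $DU=UD$ one gets $(DU)^k=(UD)^k$ for every $k\geq 1$, so a fortiori these two matrices share the same diagonal for each $1\leq k\leq \min(\kappa+1,n)$. Hence $DU=UD$ implies \C \ref{cond:commut-cyl} (the converse being false in general, as noted after Theorem~\ref{thm:cczz}). Feeding \C \ref{cond:DUT} and \C \ref{cond:commut-cyl} into Theorem~\ref{thm:cczz} gives that the law of the $(D,U)$-HZCMC is invariant by $A$ on $\HZ(n)$, which is the assertion. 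Since $n$ enters the argument only through the range of $k$, the conclusion holds simultaneously for every $n\geq 1$.

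There is essentially no obstacle here: the whole content is the elementary observation that commutation of $D$ and $U$ forces equality of all powers $(DU)^k$ and $(UD)^k$, which is far stronger than the mere equality of diagonals demanded in the cyclic setting. If one preferred a self-contained proof bypassing the characterisation theorems, one could instead start from a $(D,U)$-HZCMC on $\HZ_0(n)$ and repeat verbatim the summation performed in the proof of Theorem~\ref{thm:clzz}, now with cyclic boundary conditions; the cancellation of the factors $(DU)_{b_i,b_{i+1}}$ in the denominators against the factors $(UD)_{b_i,b_{i+1}}$ produced by summing over the intermediate states again uses only $DU=UD$, while the partition function is preserved because $\Trace((DU)^n)=\Trace((UD)^n)$. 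I would nonetheless present the short argument through Theorems~\ref{thm:clzz} and~\ref{thm:cczz}, which is the cleanest.
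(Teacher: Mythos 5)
Your proof is correct and is exactly the paper's argument: the paper's own proof simply says ``Just compare the hypothesis of Theorems~\ref{thm:clzz} and~\ref{thm:cczz}'', which is precisely your observation that $DU=UD$ from \C \ref{cond:commut} implies the diagonal condition \C \ref{cond:commut-cyl} while \C \ref{cond:DUT} is lattice-independent. You have merely spelled out the comparison the paper leaves implicit.
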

\Proof Just compare the hypothesis of Theorems \ref{thm:clzz} and \ref{thm:cczz}.$~\Box$

\paragraph{From $\H$ to $\H(n)$.} 
In the case $\kappa=1$, there exists some PCA that have a product measure invariant on $\H$ that are not Markov on $\H(n)$. To be invariant on $\H(n)$ for  infinitely many $n$ implies that the matrices $(Q_x^\rho, x \in \{0,1\})$ have rank 1 (Proposition \ref{pro:MV-ML} $(b)$).
In Section \ref{sec:KR} we have seen that, when $\TT101 \TT010 = \TT110 \TT001$, a product measure was invariant on $\H$. The computation in this case (in the positive rate case) gives $\rho_0= \l(\frac{1-\TT111}{\TT{1}01+\TT{0}10}\r)$; and with this value one checks that neither $Q_0^\rho$ nor $Q_1^\rho$ have rank 1. This does not prove the non existence of a product measure depending on $n$, invariant by the PCA acting on $\H(n)$.

\section{Proofs of Theorems \ref{theo:HZcolors},  \ref{theo:HZcolors-2} and \ref{theo:extension}}
\label{sec:PTHZ}
 
We prove Theorem \ref{theo:extension} at the end of the section. \par
To prove Theorem \ref{theo:HZcolors} and \ref{theo:HZcolors-2} we will use the characterisation given by Theorem \ref{thm:clzz} (the proof of Theorem \ref{theo:HZcolors-2} is similar, see Remark \ref{rem:theo:HZcolors-2}). First we will show the crucial following Lemma, a cornerstone of the paper.

\begin{lem}\label{lem:C1C2} Let $T$ be a positive rate TM.
The two conditions \C \ref{cond:gibbs-g} and \C \ref{cond:gibbs-1} are equivalent. They are also equivalent to the existence of a  pair of MK $(D,U)$ satisfying \C \ref{cond:DUT}.
\end{lem}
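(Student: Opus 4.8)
The plan is to prove the three statements equivalent through the cycle \C \ref{cond:gibbs-g} $\Rightarrow$ \C \ref{cond:gibbs-1} $\Rightarrow$ (existence of a pair $(D,U)$ satisfying \C \ref{cond:DUT}) $\Rightarrow$ \C \ref{cond:gibbs-g}. Throughout, the positive rate hypothesis guarantees that every $\TT{a}{b}{c}$ is strictly positive, so all the denominators occurring below are well defined.

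The implication \C \ref{cond:gibbs-g} $\Rightarrow$ \C \ref{cond:gibbs-1} is a one-line specialisation: setting $a'=b'=c'=0$ in the identity of \C \ref{cond:gibbs-g} produces exactly \C \ref{cond:gibbs-1}. The closing implication (existence) $\Rightarrow$ \C \ref{cond:gibbs-g} is an equally mechanical verification. Assuming \C \ref{cond:DUT} in its positive-rate form $\TT{a}{b}{c}=\dis D_{a,c}U_{c,b}/(DU)_{a,b}$, I would substitute this factorisation into both sides of the six-index identity of \C \ref{cond:gibbs-g} and then collect separately the $D$-factors, the $U$-factors and the $(DU)$-denominators. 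One checks that each of the three collections coincides on the two sides: the $D$-product is $D_{a,c}D_{a,c'}D_{a',c}D_{a',c'}$, the $U$-product is $U_{c,b}U_{c,b'}U_{c',b}U_{c',b'}$, and the denominator is $(DU)_{a,b}(DU)_{a,b'}(DU)_{a',b}(DU)_{a',b'}$, in both cases. Conceptually, this records the fact that any array of the multiplicative shape $\TT{a}{b}{c}=F(a,c)\,G(c,b)/H(a,b)$ automatically satisfies the no-three-body relation \C \ref{cond:gibbs-g}.

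The substantive step is \C \ref{cond:gibbs-1} $\Rightarrow$ existence, where I would exhibit the explicit pair $(D^\eta,U^\eta)$ defined in \eref{eq:iqb}, for an arbitrary fixed $\eta\in{\cal M}^\star(E_\kappa)$. First one checks that both are genuine Markov kernels: $\sum_b U^\eta_{c,b}=1$ is immediate, and $\sum_c D^\eta_{a,c}=1$ follows from $\sum_c \TT{a}{\ell}{c}=1$. The heart of the proof is then \C \ref{cond:DUT}. Using \C \ref{cond:gibbs-1} in the form $\TT{a}{\ell}{c}/\TT{a}{\ell}{0}=\TT000\TT{a}{0}{c}\TT{0}{\ell}{c}/(\TT{a}{0}{0}\TT{0}{\ell}{0}\TT{0}{0}{c})$, the sum $\sum_\ell \frac{\eta_\ell}{\TT{a}{\ell}{0}}\TT{a}{\ell}{c}$ appearing in the numerator of $D^\eta_{a,c}$ factors as a $c$-dependent prefactor times exactly the normalising sum $\sum_{b'}\frac{\eta_{b'}}{\TT{0}{b'}{0}}\TT{0}{b'}{c}$ sitting in the denominator of $U^\eta_{c,b}$. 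This cancellation is the crux of the whole argument. Once it is performed, a second use of \C \ref{cond:gibbs-1} collapses the remaining product to $D^\eta_{a,c}U^\eta_{c,b}=K_{a,b}\,\TT{a}{b}{c}$ with $K_{a,b}$ independent of $c$; summing over $c$ and using $\sum_c\TT{a}{b}{c}=1$ gives $(D^\eta U^\eta)_{a,b}=K_{a,b}>0$, whence $D^\eta_{a,c}U^\eta_{c,b}/(D^\eta U^\eta)_{a,b}=\TT{a}{b}{c}$, which is \C \ref{cond:DUT} (note that the strict positivity of $K_{a,b}$ supplies the required $(DU)_{a,b}>0$). The main obstacle is precisely isolating this cancellation; everything else is bookkeeping driven by the two normalisations $\sum_c\TT{a}{b}{c}=1$ and $\sum_b U^\eta_{c,b}=1$.
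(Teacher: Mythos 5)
Your proposal is correct and takes essentially the same approach as the paper: the same cycle of implications, with the two easy steps (specialisation at $a'=b'=c'=0$, and substitution of the factorised form into \C \ref{cond:gibbs-g}) handled identically, and the key implication \C \ref{cond:gibbs-1} $\Rightarrow$ \C \ref{cond:DUT} proved by exhibiting an explicit pair whose product $D_{a,c}U_{c,b}$ is, up to a $c$-independent factor $K_{a,b}$, equal to $\TT{a}{b}{c}$. Your chosen pair $(D^\eta,U^\eta)$ is exactly the paper's construction \eref{eq:DUG} under the change of parameters $C_b=\eta_b/\TT{0}{b}{0}$ (with $B$ and $A$ the corresponding normalisations), and your cancellation computation is the one the paper itself carries out when proving the reciprocal part of Proposition \ref{pro:zea}.
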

\begin{proof} $\bullet$ Assume first that there exists $(D,U)$ satisfying \C \ref{cond:DUT}, and let us see that \C \ref{cond:gibbs-g} is satisfied: substitute $\TT{i}jk$ by their expression in terms of $(D,U)$ as specified in \C \ref{cond:DUT} in the equation defining  \C \ref{cond:gibbs-g} , and check that both sides are equal. \\
 $\bullet$ Proof of \C \ref{cond:gibbs-g}$\imp$ \C \ref{cond:gibbs-1}: take $a'=b'=c'=0$ in \C \ref{cond:gibbs-g}.\\
 $\bullet$ Proof that \C \ref{cond:gibbs-1} implies  the existence of $(D,U)$ satisfying \C \ref{cond:DUT}.  Suppose \C \ref{cond:gibbs-1}  holds and let us find $D$ and $U$ such that 
\ben\label{eq:mas}
\frac{D_{a,b}U_{b,a'}}{ (DU)_{a,a'}}=\TT{a}{a'}{b}, \textrm{ for any } a,b,a'.
\een
It suffices to find $D$ and $U$ such that
\ben\label{eq:DUT}
D_{a,b}U_{b,a'}=\frac{\TT{a}0b \TT{0}{a'}{b}}{\TT00b} G[a,a']
\een
for some numbers $(G[i,j],0\leq i,j \leq \kappa)$, since in this case
\be
\frac{D_{a,b}U_{b,a'}}{ (DU)_{a,a'}}&=& \frac{\frac{\TT{a}0b \TT{0}{a'}{b}}{\TT00b} G[a,a']}{\sum_i \frac{\TT{a}0i \TT{0}{a'}{i}}{\TT00i} G[a,a']}
=\frac{\frac{\TT{a}0b \TT{0}{a'}{b}}{\TT00b}  }{ \frac{\TT{a}00 \TT{0}{a'}{0}}{\TT000\TT{a}{a'}{0}}  }=\TT{a}{a'}{b}.
\ee
Now, a solution to \eref{eq:DUT} is given by 
\ben\label{eq:DUG}
D_{a,b}= \frac{\TT{a}0b}{\TT{0}{0}{b}} A_{a}B_{b},~~ U_{b,a'}= {\TT{0}{a'}{b}} \frac{C_{a'}}{B_b},  ~~ G[a,a']=A_aC_{a'}
\een
where $C=(C_a, 0\leq a \leq \kappa)$ is any array of positive numbers, $B=(B_a, 0\leq a \leq \kappa)$ is chosen such that $U$ is a MK, and then $A=(A_a, 0\leq a \leq \kappa)$ such that $D$ is a MK.
\end{proof}
We now characterise the set of solutions $(D,U)$ to \C \ref{cond:DUT} when $T$ satisfies \C \ref{cond:gibbs-1}.
 \begin{pro}  \label{pro:zea}
Let $T$ with positive rate satisfying \C \ref{cond:gibbs-1}.
The set of pairs $(D,U)$ solutions to \C \ref{cond:DUT} is the set of pairs $\{(D^\eta,U^\eta), \eta \in {\cal M}^\star(E_\kappa)\}$ (indexed by the set of distributions $\eta=(\eta_a, 0\leq a \leq \kappa)$ with full support)
as defined in \eref{eq:iqb}. 
\end{pro}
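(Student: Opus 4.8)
The plan is to prove the set equality by the two inclusions. Throughout I would exploit that, in the positive rate regime, \C \ref{cond:DUT} for a pair of MK is nothing but the family of identities $\TT{a}bc=D_{a,c}U_{c,b}/(DU)_{a,b}$, equivalently $D_{a,c}U_{c,b}=\TT{a}bc\,(DU)_{a,b}$; since $\TT{a}bc>0$ this forces $D_{a,c}U_{c,b}>0$, hence $D_{a,c}>0$ and $U_{c,b}>0$ for all $a,b,c$ (and in particular $(DU)_{a,b}>0$). I shall take ratios freely using this positivity.

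For the inclusion $\{(D^\eta,U^\eta):\eta\in{\cal M}^\star(E_\kappa)\}\subseteq\{\text{solutions}\}$, the route is a direct verification fuelled by \C \ref{cond:gibbs-1}. Fix $\eta$ and abbreviate $R_c:=\sum_\ell \frac{\eta_\ell}{\TT0\ell0}\TT0\ell c$ and $S_a:=\sum_{b'}\frac{\eta_{b'}}{\TT a{b'}0}$. Substituting \C \ref{cond:gibbs-1} into the numerator of $D^\eta$ in \eref{eq:iqb} collapses it to $D^\eta_{a,c}=\frac{\TT000\,\TT a0c}{\TT a00\,\TT00c}\,\frac{R_c}{S_a}$, while $U^\eta_{c,b}=\frac{\eta_b\TT0bc}{\TT0b0\,R_c}$; the factor $R_c$ cancels in the product. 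Summing over $c$ and using \C \ref{cond:gibbs-1} together with $\sum_c\TT{a}bc=1$ gives $(D^\eta U^\eta)_{a,b}=\eta_b/(S_a\,\TT ab0)$, after which the ratio $D^\eta_{a,c}U^\eta_{c,b}/(D^\eta U^\eta)_{a,b}$ reduces, by one more application of \C \ref{cond:gibbs-1}, to exactly $\TT{a}bc$. Stochasticity of $D^\eta,U^\eta$ is the trivial telescoping of their row sums. Hence every $(D^\eta,U^\eta)$ solves \C \ref{cond:DUT}.

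For the reverse inclusion I would start from an arbitrary solution $(D,U)$ and reconstruct $\eta$. Inserting \C \ref{cond:gibbs-1} into $D_{a,c}U_{c,b}=\TT{a}bc\,(DU)_{a,b}$ and isolating the $c$-dependence yields the product form
\[
D_{a,c}U_{c,b}=\frac{\TT a0c\,\TT0bc}{\TT00c}\,G[a,b],\qquad G[a,b]:=\frac{\TT000\,\TT ab0\,(DU)_{a,b}}{\TT a00\,\TT0b0},
\]
which is precisely relation \eref{eq:DUT} from the proof of Lemma \ref{lem:C1C2}, now with a determined $G$. Setting $a=0$ gives $U_{c,b}=\TT0bc\,\psi(b)/D_{0,c}$ with $\psi(b):=G[0,b]$, and stochasticity of the $c$-th row of $U$ reads $\sum_b\TT0bc\,\psi(b)=D_{0,c}$. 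The key observation is then that the single row $\eta_b:=U_{0,b}$ (a full-support distribution) already pins down the parameter: plugging this $\eta$ into \eref{eq:iqb} and using the displayed identity to evaluate the normalising denominator of $U^\eta_{c,b}$ as $D_{0,c}/D_{0,0}$, a one-line computation gives $U^\eta_{c,b}=\psi(b)\TT0bc/D_{0,c}=U_{c,b}$, i.e. $U=U^\eta$.

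It remains to deduce $D=D^\eta$, which I would obtain for free: since $U=U^\eta$, both $(D,U)$ and $(D^\eta,U)$ satisfy \C \ref{cond:DUT} (the latter by the first inclusion), so cancelling the common positive factor $U_{c,b}$ from the two ratio identities gives $D_{a,c}/D^\eta_{a,c}=(DU)_{a,b}/(D^\eta U)_{a,b}$; the left side is independent of $b$ and the right side independent of $c$, so both equal a constant $r_a$, and summing $D_{a,c}=r_a D^\eta_{a,c}$ over $c$ forces $r_a=1$. This yields $(D,U)=(D^\eta,U^\eta)$ and closes the argument. I expect the only genuinely delicate part to be the bookkeeping leading to the product form and the recognition that $\eta=U_{0,\cdot}$ is the correct choice; once that is in place, positivity and the stochasticity constraints make everything else forced, so no factorisation of $G$ or appeal to the explicit construction \eref{eq:iqb} beyond direct substitution is needed.
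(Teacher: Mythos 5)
Your proof is correct, and its skeleton is the paper's: the same two inclusions, the parameter recovered as $\eta_b=U_{0,b}$, and the converse inclusion established by the same direct substitution of \C \ref{cond:gibbs-1} into \eref{eq:iqb} (your computation $(D^\eta U^\eta)_{a,b}=\eta_b/(S_a\,\TT{a}{b}{0})$ is exactly the paper's closing identity, and your determination of $U=U^\eta$ via the denominator $D_{0,c}/D_{0,0}$ is the paper's \eref{eq:Ucb} with $a$ replaced by $0$, legitimate under \C \ref{cond:gibbs-1}; likewise your product form $D_{a,c}U_{c,b}=\frac{\TT{a}{0}{c}\TT{0}{b}{c}}{\TT{0}{0}{c}}G[a,b]$ is \eref{eq:DUT} from Lemma \ref{lem:C1C2}, now with $G$ pinned down by $(DU)_{a,b}$). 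The one place you genuinely depart from the paper is the endgame $D=D^\eta$. The paper gets it by explicit summation: summing its factorised identity \eref{eq:DUT222} over $b$ yields $D_{a,c}=D_{a,0}\sum_b \frac{U_{0,b}}{\TT{a}{b}{0}}\TT{a}{b}{c}$, and summing that over $c$ gives the normalisation \eref{eq:Da0}, so $D=D^\eta$ is read off from \eref{eq:iqb}. You instead observe that $(D,U)$ and $(D^\eta,U)$ both satisfy \C \ref{cond:DUT} (the latter by your already-proved inclusion, since $U=U^\eta$), cancel the positive factor $U_{c,b}$, and note that $D_{a,c}/D^\eta_{a,c}$ equals $(DU)_{a,b}/(D^\eta U)_{a,b}$, hence is independent of both $b$ and $c$; row-stochasticity then forces the constant $r_a$ to be $1$. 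This uniqueness-by-cancellation is clean and spares you computing $D_{a,0}$ explicitly; its only cost is that the paper's route produces en passant the closed formulas \eref{eq:Da0} and \eref{eq:DU} that are reused later in the proof of Theorem \ref{theo:HZcolors}, whereas your argument does not. Your positivity preamble (that \C \ref{cond:DUT} with positive-rate $T$ forces $D$ and $U$ entrywise positive, the second bullet being vacuous) matches the paper's, and all the intermediate identities you assert check out.
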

\begin{proof} Assume that \C \ref{cond:gibbs-1} holds. By Lemma \ref{lem:C1C2}, there exists $(D,U)$ satisfying \C \ref{cond:DUT}, that is such that for any  $0\leq a,b,c\leq \kappa$, $\TT{a}{b}{c}(DU)_{a,b}   = D_{a,c}U_{c,b}$. 
If all the $\TT{a}{b}{c}$ are positive, then for any $a,b$, $D_{a,b}$ and $U_{a,b}$ are also positive, and then one gets
\ben\label{eq:DUT222}
D_{a,c}U_{c,b}= \frac{D_{a,0}U_{0,b}}{\TT{a}{b}{0}}\TT{a}{b}{c}
\een
and then
\ben\label{eq:DUT222b}
 (DU)_{a,b}= \frac{D_{a,0} U_{0,b}}{\TT{a}{b}{0}}.
\een
In the positive rate case, it is also true that if $(D,U,T)$ satisfies \eref{eq:DUT222} and \eref{eq:DUT222b} then $(D,U)$ satisfies \C \ref{cond:DUT}.  Observe that \eref{eq:DUT222} implies (summing over $b$),
\ben\label{eq:DUT222-2}
D_{a,c}=D_{a,0}\sum_b \frac{U_{0,b}}{\TT{a}{b}{0}}\TT{a}{b}{c},
\een
and then by \eref{eq:DUT222} again (replacing $D_{a,c}$ by the right hand side of \eref{eq:DUT222-2}) we get
\ben\label{eq:Ucb}
U_{c,b}  &=& \frac{ \frac{U_{0,b}}{\TT{a}{b}{0}}\TT{a}{b}{c}}{ \sum_{b'} \frac{ U_{0,b'}}{\TT{a}{b'}{0}}\TT{a}{b'}{c}}.
\een
Notice at the right hand side, one can replace $a$ by $0$ since \C \ref{cond:gibbs-1} holds.

Now, summing over $c$ in \eref{eq:DUT222-2} we get 
\ben\label{eq:DUT2223}
D_{a,\star}=1=\sum_c\sum_b \frac{D_{a,0}U_{0,b}}{\TT{a}{b}{0}}\TT{a}{b}{c} =\sum_b \frac{D_{a,0}U_{0,b}}{\TT{a}{b}{0}} ,
\een
which implies 
\ben\label{eq:Da0}
D_{a,0}=\l({\sum_{b}U_{0,b}/\TT{a}{b}{0}}\r)^{-1}.
\een
We then see clearly that the distributions $\eta$ defined by
\[\eta_b=U_{0,b},~~b=0,\dots,\kappa\] 
can be used to parametrise the set of solutions.  Replacing $U_{0,b}$ by $\eta_b$ in \eref{eq:Da0}, we obtain that $D_{a,0}=\l({\sum_{b}\eta_b/\TT{a}{b}{0}}\r)^{-1}$. Now,  using this formula in \eref{eq:DUT222-2} and again the fact that $U_{0,b}=\eta_b$, we get the representation of $D^\eta$ as defined in \eref{eq:iqb}. The representation of $U^\eta$ (provided in \eref{eq:iqb}) is obtained by replacing $U_{0,b}$ by $\eta_b$ in   \eref{eq:Ucb}. \par
We have establish that $(D,U)$ satisfies \C \ref{cond:DUT} implies $(D,U)=(D^\eta,U^\eta)$ for $\eta=(U_{0,b},b \in E_\kappa)$.\par
 Reciprocally, take any distribution $\eta\in {\cal M}^\star(E_\kappa)$ and let us check that
\ben\label{eq:id}
\frac{D_{a,c}^{\eta}U_{c,b}^\eta}{\sum_{c'}D_{a,c'}^{\eta}U_{c',b}^\eta}  =\TT{a}bc
\een (the definition of $D^\eta$ and $U^\eta$ are given in \eref{eq:iqb}). It is convenient to start by noticing that under \C \ref{cond:gibbs-1} for any $a,b,c$,
 \[U_{c,b}^\eta=  \frac{ \frac{\eta_{b}}{\TT{0}{b}{0}}\TT{0}{b}{c}}{ \sum_{b'} \frac{ \eta_{b'}}{\TT{0}{b'}{0}}\TT{0}{b'}{c}} = \frac{ \frac{\eta_{b}}{\TT{a}{b}{0}}\TT{a}{b}{c}}{ \sum_{b'} \frac{ \eta_{b'}}{\TT{a}{b'}{0}}\TT{a}{b'}{c}}.\]
Thanks to this, one sees that
\[D_{a,c}^{\eta}U_{c,b}^\eta=
\frac{ 1}{\sum_{b'}\frac{\eta_{b'}}{\TT{a}{b'}{0}}}       \frac{ \frac{\eta_{b}}{\TT{a}{b}{0}}\TT{a}{b}{c}}{1 }\]
from which \eref{eq:id} follows readily.
\end{proof}

We end now the proof of Theorem  \ref{theo:HZcolors}. 
A consequence of the previous considerations is that there exists $(D,U)$ satisfying  \C \ref{cond:DUT} and \C \ref{cond:commut}
iff there exists $\eta$ such that  $D^\eta U^\eta =U^\eta D^\eta$, and of course, in this case $(D,U)= (D^\eta, U^\eta)$ satisfies \C \ref{cond:DUT}.   
Not much remains to be done: we need to determine the existence (or not) and the value of $\eta$ for which  $D^\eta U^\eta =U^\eta D^\eta$ and if such a $\eta$ exists compute the invariant distribution of the MC with MK $U^\eta$ and $D^\eta$. \par
We claim now that if  $D^\eta U^\eta =U^\eta D^\eta$ , then $\eta=\gamma$ the stochastic Perron-LE of $X$. As a consequence there exists at most one distribution $\eta$ such that $D^\eta U^\eta =U^\eta D^\eta$. To show this claim proceed as follows. 
Assume that there exists $\eta$ such that $D^\eta U^\eta =U^\eta D^\eta$ (where $D^\eta U^\eta$ is given in \eref{eq:DU}, and  $U^\eta D^\eta$ is computed as usual, starting from \eref{eq:iqb}).
By \eref{eq:DUT222b} and \eref{eq:Da0}  we  have  
\ben\label{eq:DU}
(D^\eta U^\eta)_{a,b}= \frac{1}{\sum_{d}\frac{\eta_d}{\TT{a}{d}{0}}}  \frac{  \eta_{b}}{\TT{a}{b}{0}}.
\een
Hence $D^\eta U^\eta =U^\eta D^\eta$ is equivalent to  
\ben\label{eq:DU2}
  \frac{1}{\sum_{d}\frac{\eta_d}{\TT{a}{d}{0}}}  \frac{  \eta_{b}}{\TT{a}{b}{0}}=\sum_c
\frac{ \frac{\eta_{c}}{\TT{0}{c}{0}}\TT{0}{c}{a}}{ \sum_{b'} \frac{ \eta_{b'}}{\TT{0}{b'}{0}}\TT{0}{b'}{a}  } 
\frac{\sum_\ell \frac{  \eta_{\ell}}{\TT{c}{\ell}{0}}\TT{c}{\ell}{b} }{\sum_{b''}\frac{\eta_{b''}}{\TT{c}{b''}{0}}},\textrm{ for any } a,b
\een
Replace 
$\frac{\TT{c}{\ell}{b}}{\TT{c}{\ell}{0}}$ by $\l(\frac{\TT0\ell{b}}{\TT{0}{\ell}0} \r)\frac{\TT{0}00\TT{c}0b}{\TT{c}00\TT00b}$ and introduce 
\ben\label{eq:sddd}
g_a=\l({\sum_{d} {\eta_d}/{\TT{a}{d}{0}}}\r)^{-1},~~ f_a= \sum_{b'} \frac{ \eta_{b'}}{\TT{0}{b'}{0}}\TT{0}{b'}{a} 
\een
\eref{eq:DU2} rewrites
\ben\label{eq:DUdsfw}
 g_a \frac{  \eta_{b}}{\TT{a}{b}{0}}=
\sum_c
\frac{ \frac{\eta_{c}}{\TT{0}{c}{0}}\TT{0}{c}{a}}{f_a  } 
 f_b\frac{\TT{0}00\TT{c}0b}{\TT{c}00\TT00b}  g_c,\textrm{ for any } a,b, 
\een
and, using \C  \ref{cond:gibbs-1} again, $ \frac{1}{\TT{0}{c}{0}}\frac{\TT{0}00\TT{c}0b}{\TT{c}00\TT00b}= \frac{\TT{c}cb}{\TT{c}c0\TT0cb}$, \eref{eq:DUdsfw} is equivalent to
\ben\label{eq:DUdfw}
  \frac{g_a  \eta_{b}}{\TT{a}{b}{0}}=
\frac{f_b}{f_a}\sum_c
\frac{  g_c \eta_{c}   }{ \TT{c}c0 } \frac{\TT{0}{c}{a}}{\TT0cb} \TT{c}cb,\textrm{ for any } a,b.
\een
The question is still here to find/guess, for which TM $T$ there exists $\eta$ solving this system of equations (some $\eta's$ are also hidden in $g$ and $f$). In the sequel we establish that there exists at most one $\eta$ that solves the system: it is $\gamma$. For this we notice that for $a=b$ this system \eref{eq:DUdfw} simplifies: $(D^\eta U^\eta)_{a,a} =(U^\eta D^\eta)_{a,a}$ (for any $a$) is equivalent to 
\ben\label{eq:DUdssqfw}
  \frac{g_a  \eta_{a}}{\TT{a}{a}{0}}=
 \sum_c
\frac{  g_c \eta_{c}   }{ \TT{c}c0 }  \TT{c}ca, \textrm{~ for any }a
\een
which is equivalent to the matrix equation:
\ben\label{eq:vpvp}
\bma   \frac{  g_a\eta_{a}}{\TT{a}{a}{0}},a =0,\ldots,\kappa \ema=\lambda^\star \nu ,
\een
where $\nu$ is the stochastic Perron-LE of $Y$ and $\lambda^\star$ some free parameter. By \eref{eq:sddd}, \eref{eq:vpvp} rewrites
\[\frac{1}{\sum_{d}\frac{\eta_d}{\TT{a}{d}{0}}}\frac{  \eta_{a}}{\TT{a}{a}{0}}=\lambda^\star\nu_a\]
and taking the inverse, we see that $\eta$ needs to be solution to
\ben\label{eq:ofd}
\sum_{d}{\eta_d \frac{\TT{a}{a}{0}\nu_a}{\TT{a}{d}{0}}} =\frac{1}{\lambda^\star} {\eta_{a}}{}.
\een
The only possible $\eta$ is then $\gamma$ the unique Perron-LE of $X$ (which can be normalised to be stochastic), and we must have
\ben\label{eq:star} 1/\lambda^\star=\lambda,
\een the Perron-eigenvalue of $X$.  Hence $D^\eta U^\eta=U^\eta D^\eta$ implies $\eta=\gamma$. Nevertheless this does not imply  $D^\gamma U^\gamma=U^\gamma D^\gamma$ and then the condition $D^\gamma U^\gamma=U^\gamma D^\gamma$ remains in Theorem \ref{theo:HZcolors} (what is true in all cases is $(D^\gamma U^\gamma)_{a,a}=(U^\gamma D^\gamma)_{a,a}$ for any $a$). 

However when $\kappa=1$ this is sufficient since one can deduce the equality of two MK $K$ and $K'$ from  $K_{0,0}=K'_{0,0}$ and $K_{1,1}=K'_{1,1}$ only. This is why in Theorem \ref{theo:HZcolors} a slight simplification occurs for the case $\kappa=1$. 
When $\kappa>1$ this is no more sufficient. 
\begin{rem}\label{rem:theo:HZcolors-2}
This ends the proof of Theorem \ref{theo:HZcolors-2} since we see that \C \ref{cond:gibbs-1} and ${\sf Diagonal}(DU)={\sf Diagonal}(UD)$ imply $\eta=\gamma$ (the converse in Theorem \ref{theo:HZcolors-2} is easy). And the discussion just above the remark suffices to check the statement concerning the case $\kappa=1$.
\end{rem}
It remains to find the stochastic Perron-LE $\rho$ of  $D^\gamma U^\gamma$.

Consider \eref{eq:DU}, where $\eta$ is now replaced by $\gamma$. By \eref{eq:ofd} and \eref{eq:star}, we have
\[  (D^\gamma U^\gamma)_{a,b} = (1/\lambda)\frac{\TT{a}a0 \nu_a \gamma_b}{ \TT{a}b0 \gamma_a}.\]
Hence, $\rho$ is characterized as the vector whose entries sum to 1, and such that,
   \ben\label{eq:dqz} \sum_{a} \frac{\TT{a}a0 \nu_a \gamma_b}{\TT{a}b0 \gamma_a} \rho_a = \lambda \rho_b,~~ \textrm{ for any }b\in E_\kappa.\een
Taking   $\mu_i = \rho_i / \gamma_i$, \eref{eq:dqz} is equivalent to
\[   \sum_{a} \frac{\TT{a}a0 \nu_a}{\TT{a}b0} \mu_a = \lambda \mu_b,~~ \textrm{ for any }b\in E_\kappa\]
which means that $\mu$ is the Perron-RE of $X$. We have obtained that $\rho_i=\mu_i \gamma_i$.
Since $D^\gamma U^\gamma=U^\gamma D^\gamma$, the Perron-LE of $D^\gamma$ and $U^\gamma$ coincide with that of $D^\gamma U^\gamma$. 
This ends the proof of Theorem \ref{theo:HZcolors}.

\subsection{Proof of Theorem \ref{theo:extension}}

We follow the arguments of the proof of Theorem \ref{theo:HZcolors} and adapt them slightly to the present case. The only difference is that \C \ref{cond:tauxg} replaces the positive rate condition.

Lemma \ref{lem:C1C2} still holds if instead of the positive rate condition we take \C \ref{cond:tauxg} (Remark \ref{rem:dazq} is needed to see why  \C \ref{cond:gibbs-1} $\imp$ \C \ref{cond:gibbs-g}, and the positivity of $\TT{a}00$ and $\TT{0}b0$ to see that there exists $(D,U)$ satisfying moreover $(DU)_{a,b}>0$ for all $a,b$). Also, we have $D_{a,0}>0$, $U_{0,b}>0$, $D_{0,a}>0$ and $U_{0,b}>0$ for any $a,b$ by \C \ref{cond:DUT}.
In \eref{eq:iqb}, $D_{a,c}^{\eta}$ and $U_{c,b}^\eta$ are well defined under \C \ref{cond:tauxg} only. \eref{eq:DUT222} still holds for the same reason, and again the pair of conditions \eref{eq:DUT222} and \eref{eq:DUT222b} is equivalent to \C  \ref{cond:gibbs-1} under \C \ref{cond:tauxg} only. \eref{eq:DUT222-2} still holds, but there is a small problem for \eref{eq:Ucb} since the division by $D_{a,c}$ is not possible for all $a$. The $D_{a,c}$ (for fixed $c$) are not 0 for all $a$ since $D_{0,c}>0$. So \eref{eq:Ucb} holds for the $a$ such that $D_{a,c}>0$. If all the $\TT{a}{b}{c}=0$ then take $U_{c,b}^\gamma=0$. 
The rest of the proof of Theorem \ref{theo:HZcolors} can be adapted with no additional problem.~~$\Box$

\subsection*{Acknowledgements}

We thank both referees for their numerous remarks, suggestions and corrections, that really helped us to improve the paper

\small
\bibliographystyle{abbrv}

\begin{thebibliography}{10}

\bibitem{Alb}
M.~Albenque.
\newblock A note on the enumeration of directed animals via gas considerations.
\newblock {\em Annals of Applied Probability}, 14(5):1860--1879, 2009.

\bibitem{Bag}
F.~Bagnoli.
\newblock Cellular automata.
\newblock {\em http://arxiv.org/abs/cond-mat/9810012v1}, 1998.

\bibitem{bgm}
Y.~K. Beliaev, Y.~I. Gromak, and V.~A. Malyshev.
\newblock Invariant random boolean fields.
\newblock {\em Mathematical Notes of the Academy of Sciences of the USSR},
  6(5):792--799, 1969.

\bibitem{MBM}
M.~Bousquet-M{\'e}lou.
\newblock New enumerative results on two-dimensional directed animals.
\newblock {\em Discrete Mathematics}, 180(1):73--106, 1998.

\bibitem{CM}
P.~Chassaing and J.~Mairesse.
\newblock A non-ergodic probabilistic cellular automaton with a unique
  invariant measure.
\newblock {\em Stochastic Process. Appl.}, 121(11):2474--2487, 2011.

\bibitem{CLM}
B.~Chopard, P.~Luthi, and A.~Masselot.
\newblock Cellular automata and lattice boltzmann techniques: An approach to
  model and simulate complex systems.
\newblock In {\em Adv. Phys., submitted}, page~98, 1998.

\bibitem{DP}
P.~Dai~Pra.
\newblock {\em Space-time large deviations for interacting particle systems}.
\newblock PhD thesis, Rutgers Univ., 1992.

\bibitem{PLR}
P.~Dai~Pra, P.~Louis, and S.~Roelly.
\newblock {\em Stationary Measures and Phase Transition for a Class of
  Probabilistic Cellular Automata}.
\newblock Preprint. WIAS, 2001.

\bibitem{DH1}
D.~Dhar.
\newblock Equivalence of the two-dimensional directed-site animal problem to
  {B}axter's hard-square lattice-gas model.
\newblock {\em Phys. Rev. Lett.}, 49(14):959--962, 1982.

\bibitem{DH4}
D.~Dhar.
\newblock The directed animals and related problems.
\newblock In {\em Field theory, quantum gravity and strings (Meudon/Paris,
  1984/1985)}, volume 246 of {\em Lect. Notes Phys.}, pages 368--372. Springer,
  Berlin, 1986.

\bibitem{Gal}
N.~Ganguly, B.~Sikdar, A.~Deutsch, G.~Canright, and P.~P~Chaudhuri.
\newblock A survey on cellular automata.
\newblock Technical report, Centre for High Performance Computing, Dresden
  University of Technology, 2003.

\bibitem{geo11}
H.~Georgii.
\newblock {\em Gibbs Measures and Phase Transitions}.
\newblock De Gruyter studies in maths. De Gruyter, 2011.

\bibitem{Gacs}
P.~Gács.
\newblock Reliable cellular automata with self-organization.
\newblock {\em J. Stat. Phys.}, 103:90--99, 1998.

\bibitem{Kari}
J.~Kari.
\newblock Theory of cellular automata: a survey.
\newblock {\em Theor. Comput. Sci.}, 334(1-3):3--33, Apr. 2005.

\bibitem{LB-M}
Y.~Le~Borgne and J.~Marckert.
\newblock Directed animals and gas models revisited.
\newblock {\em Journal of Combinatorics}, 14(4):R71, 2007.

\bibitem{Leb}
J.~L. Lebowitz, C.~Maes, and E.~R. Speer.
\newblock Statistical mechanics of probabilistic cellular automata.
\newblock {\em J. Statist. Phys.}, 59(1-2):117--170, 1990.

\bibitem{lig}
T.~Liggett.
\newblock {\em Interacting Particle Systems}.
\newblock Classics in Mathematics. Springer, 1985.

\bibitem{PYL}
P.~Louis.
\newblock {\em Automates cellulaires probabilistes : mesures stationnaires,
  mesures de Gibbs associées et ergodicité}.
\newblock PhD thesis, Université Lille 1, 2002.

\bibitem{MM2}
J.~Mairesse and I.~Marcovici.
\newblock Around probabilistic cellular automaton.
\newblock {\em Theoretical Computer Science}, 559:42--72, 2014.

\bibitem{1207.5917}
J.~Mairesse and I.~Marcovici.
\newblock Probabilistic cellular automata and random fields with iid
  directions.
\newblock {\em Annales de l'Institut Henri Poincaré, Probabilités et
  Statistiques}, 50(2):455--475, 2014.

\bibitem{Ma}
J.~Marckert.
\newblock Directed animals, quadratic and rewriting systems.
\newblock {\em Electron. J. Comb.}, 19(3):P45, 2012.

\bibitem{IM}
I.~Marcovici.
\newblock {\em Automates cellulaires probabilistes et mesures spécifiques sur
  des espaces symboliques}.
\newblock PhD thesis, Université Paris Diderot, 2013.

\bibitem{dkt}
A.~Toom, N.~Vasilyev, O.~Stavskaya, L.~Mityushin, G.~Kurdyumov, and S.~Pirogov.
\newblock {\em Stochastic cellular systems: ergodicity, memory, morphogenesis
  (Part : Discrete local Markov systems, 1--182)}.
\newblock R.L. Dobrushin and V.I. Kryukov and A.L. Toom editors, Manchester
  University Press, Manchester, 1990.

\bibitem{vas}
N.~Vasilyev.
\newblock Bernoulli and markov stationary measures in discrete local
  interactions.
\newblock In {\em Locally Interacting Systems and Their Application in
  Biology}, pages 99--112. Springer, 1978.

\bibitem{VER}
A.~Verhagen.
\newblock An exactly soluble case of the triangular ising model in a magnetic
  field.
\newblock {\em J. Stats. Phys.}, 15(3):219--231, 1976.

\bibitem{wol94}
S.~Wolfram.
\newblock {\em Cellular Automata and Complexity: Collected Papers}.
\newblock Advanced Book Program. Addison-Wesley Publishing Company, 1994.

\end{thebibliography}

\end{document}